\documentclass[12pt,draftcls, onecolumn]{IEEEtran}
\usepackage{amsthm,amsmath,amssymb}
\usepackage{graphicx,xcolor}
\usepackage{subcaption}
\usepackage{nicefrac}
\usepackage[normalem]{ulem}
\usepackage{setspace}
\usepackage[font=footnotesize,labelfont=bf]{caption}
\usepackage{accents}
\usepackage{algpseudocode}
\usepackage{tikz}
\usetikzlibrary{calc}
\usetikzlibrary{arrows}

\newcommand\str{\bgroup\markoverwith{\textcolor{red}{\rule[0.5ex]{2pt}{1.5pt}}}\ULon} 

\newtheorem{theorem}{Theorem}[section]

\newtheorem{assumption}{Assumption}[section]
\newtheorem{lemma}[theorem]{Lemma}

\DeclareMathOperator*{\argmin}{\arg\!\min}

\def\grad{\nabla}

\def\be{\mathbf{e}}

\def\bp{\mathbf{p}}
\def\bq{\mathbf{q}}

\def\bs{\mathbf{s}}

\def\bu{\mathbf{u}}
\def\bv{\mathbf{v}}
\def\bw{\mathbf{w}}
\def\bx{\mathbf{x}}  
\def\by{\mathbf{y}}
\def\bz{\mathbf{z}}

\def\bD{\mathbf{D}}

\def\bL{\mathbf{L}}

\def\bQ{\mathbf{Q}}

\def\x{{\boldsymbol{\xi}}}

\def\cA{\mathcal{A}}
\def\cB{\mathcal{B}}
\def\cC{\mathcal{C}}

\def\cE{\mathcal{E}}
\def\cF{\mathcal{F}}
\def\cG{\mathcal{G}}

\def\cK{\mathcal{K}}
\def\cL{\mathcal{L}}

\def\cN{\mathcal{N}}
\def\cO{\mathcal{O}}
\def\cP{\mathcal{P}}

\def\cR{\mathcal{R}}
\def\cS{\mathcal{S}}

\def\cX{\mathcal{X}}
\def\cY{\mathcal{Y}}

\def\smskip{\smallskip}

\def\texitem#1{\par\smskip\noindent\hangindent 25pt
               \hbox to 25pt {\hss #1 ~}\ignorespaces}

\def\abs#1{\left|#1\right|}
\def\norm#1{\left\|#1\right\|}

\newcommand{\BEAS}{\begin{eqnarray*}}
\newcommand{\EEAS}{\end{eqnarray*}}
\newcommand{\BEA}{\begin{eqnarray}}
\newcommand{\EEA}{\end{eqnarray}}
\newcommand{\BEQ}{\begin{eqnarray}}
\newcommand{\EEQ}{\end{eqnarray}}
\newcommand{\BIT}{\begin{itemize}}
\newcommand{\EIT}{\end{itemize}}
\newcommand{\BNUM}{\begin{enumerate}}
\newcommand{\ENUM}{\end{enumerate}}

\newcommand{\BA}{\begin{array}}
\newcommand{\EA}{\end{array}}


\newcommand{\ones}{\mathbf 1}

\newcommand{\reals}{\mathbb{R}}
\newcommand{\integers}{\mathbb{Z}}




\newcommand{\Rank}{\mathop{\bf rank}}

\newcommand{\diag}{\mathop{\bf diag}}




\newcommand{\dom}{\mathop{\bf dom}}






\newif\ifpagenumbering
\pagenumberingtrue

\pagenumberingfalse

%
%
\newsavebox{\theorembox}
\newsavebox{\lemmabox}
\newsavebox{\defnbox}
\newsavebox{\corollarybox}
\newsavebox{\remarkbox}
\newsavebox{\assbox}
\savebox{\theorembox}{\noindent\bf Theorem}
\savebox{\lemmabox}{\noindent\bf Lemma}
\savebox{\defnbox}{\noindent\bf Definition}
\savebox{\corollarybox}{\noindent\bf Corollary}
\savebox{\remarkbox}{\noindent\bf Remark}
\newtheorem{remark}{\usebox{\remarkbox}}[section]
\newtheorem{defn}{\usebox{\defnbox}}






\def\fprod#1{\left\langle#1\right\rangle}
\def\prox#1{\mathbf{prox}_{#1}}
\def\ind#1{\mathbb{I}_{#1}}
\def\T{\mathsf{T}}
\def\id{\mathbf{I}}
\def\zero{\mathbf{0}}
\def\one{\mathbf{1}}

\def\btheta{\boldsymbol{\theta}}
\def\blambda{\boldsymbol{\lambda}}
\def\bmu{\boldsymbol{\nu}}
\def\bxi{\boldsymbol{\xi}}
\def\bom{\boldsymbol{\omega}}
\def\st{\mathbf{s.t.}}
\def\Ct{\widetilde{\cC}}
\def\tcR{\widetilde{\cR}}

\def\ttau{\tilde{\tau}}
\newcommand{\ubar}[1]{\underaccent{\bar}{#1}}

\def\sy{\mathbb{S}}
\def\ns{\mathbf{Null}}
\def\spn{\mathbf{Span}}

\title{\vspace*{-3mm}{Multi-agent constrained optimization of\\ a strongly convex function over\\ time-varying directed networks}\vspace*{-2mm}}
\begin{document}
\author{\IEEEauthorblockN{
Erfan Yazdandoost Hamedani\IEEEauthorrefmark{1}, and
Necdet Serhat Aybat\IEEEauthorrefmark{1}}\\
\IEEEauthorblockA{\IEEEauthorrefmark{1}Industrial \& Manufacturing Engineering Department,\\ The Pennsylvania State University, PA, USA. \\Emails: {\tt evy5047@psu.edu}, {\tt nsa10@psu.edu}\vspace*{-12mm}}
}
\maketitle
\begin{abstract}
\vspace*{-3mm}
We consider cooperative multi-agent consensus optimization problems over both static and {time-varying communication networks}, where only local communications are allowed. The objective is to minimize {the sum of agent-specific possibly non-smooth composite convex functions} over agent-specific private conic constraint sets; hence, the optimal consensus decision should lie in the intersection of these private sets. {Assuming the sum function is strongly convex}, we provide convergence rates in sub-optimality, infeasibility and consensus violation; examine the effect of underlying network topology on the convergence rates of the proposed decentralized algorithms.\vspace*{-3mm}
\end{abstract}
\section{Introduction}
Decentralized optimization over communication networks has various applications: i) distributed parameter estimation in wireless sensor networks~\cite{predd2006distributed,schizas2008consensus}; ii) multi-agent cooperative control and coordination in multirobot networks~\cite{zhou2011multirobot,cao2013overview}; iii) distributed spectrum sensing in cognitive radio networks~\cite{bazerque2010distributed,bazerque2011group}; iv) processing distributed big-data in (online) machine learning~\cite{tsianos2012consensus,duchi2012dual,cortes1995support,dekel2011optimal,towfic2011collaborative}; v) power control problem in cellular networks~\cite{ram2009distributed}, to name a few application areas.
In many of these network applications the communication network may be directed, i.e., communication links can be unidirectional, and/or the network in the wireless setting may be time-varying, e.g., communication links can be on/off over time due to failures or the links may exist among agents depending on their inter-distances. {In the context of decentralized optimization, time-varying directed networks can also arise in wired networks as uni-directional asynchronous protocols are desired over bi-directional communication protocols which create deadlocks due to lack of enforcement rule to block a third node when the other two neighbors are exchanging local variables between themselves~\cite{tsianos2012consensus}. In majority of the applications discussed above, other than the topology being time-invariant~(static) or time-varying, or the network having undirected or directed links, one common characteristic shared by today's big-data networks is that the network size is usually prohibitively large} for centralized optimization, which requires a fusion center that collects the physically distributed data and runs a centralized optimization method. This process has expensive communication overhead, requires large enough memory to store and process the data, and also may violate data privacy in case agent are not willing to share their data even though they are collaborative agents~\cite{chen2012diffusion,olfati2007consensus}.

In this paper, from a broader perspective, we aim to study constrained distributed optimization of a strongly convex function over static or time-varying communication networks $\cG^t=(\cN,\cE^t)$ for $t\geq 0$; in particular, from an application perspective, we are motivated to design an efficient \emph{decentralized} solution method for \emph{constrained LASSO}~(C-LASSO) problems~\cite{gaines2016algorithms} with distributed data. C-LASSO, having the generic form $\min_x\{\lambda\norm{x}_1+\norm{Cx-d}_2^2:\ Ax\leq b\}$, is an important class of problems {in statistics}, which includes fused LASSO, constrained regression, and generalized LASSO problems 
as its special cases~\cite{hofner2016unified,gaines2016algorithms,james2013penalized} to name a few. In the rest, we provide our results for a more general {setting of constrained decentralized optimization}. We assume that \textbf{i)} each node $i\in\cN$ has a \emph{local} conic convex constraint set $\chi_i$, for which projections are not easy to compute, and a \emph{local} convex objective function $\varphi_i$ (possibly non-smooth) such that $\sum_{i\in\cN}\varphi_i(x)$ is strongly convex, and \textbf{ii)} nodes are willing to collaborate, without sharing their \emph{private} data defining $\chi_i$ and $\varphi_i$, to compute an optimal consensus decision minimizing the sum of local functions and satisfying all local constraints; moreover, \textbf{iii)} nodes are only allowed to communicate with the neighboring nodes over the links in the network.
Although we assume that $\sum_{i\in\cN}\varphi_i(x)$ is strongly convex, it is possible that none of the local functions $\{\varphi_i\}_{i\in\cN}$ are strongly convex. This kind of structure arises in LASSO problems; in particular, let $\varphi_i:\reals^n\rightarrow\reals$ such that $\varphi_i(x)=\lambda\norm{x}_1+\norm{C_ix-d_i}_2^2$ for $C_i\in\reals^{m_i\times n}$ and $d_i\in\reals^{m_i}$ for $i\in\cN$. Note that while $\varphi_i$ is merely convex for all $i\in\cN$, $\sum_{i\in\cN}\varphi_i(x)$ is strongly convex when $m_i<n$ for $i\in\cN$ and $\Rank(C)=n\leq \sum_{i\in\cN}m_i{\triangleq m}$ where $C=[C_i]_{i\in\cN}{\in\reals^{m\times n}}$. Therefore, it is important to note that in the centralized formulation of this problem $\min_x\sum_{i\in\cN}\varphi_i(x)$, the objective is strongly convex; however, in the decentralized formulation, this is not the case where we minimize $\sum_{i\in\cN}\varphi_i(x_i)$ while imposing consensus among local variables $\{x_i\}_{i\in\cN}$. In the numerical section, we considered a distributed C-LASSO problem under a similar strong convexity setting.

Many of the real-life application problems discussed above are special cases of the generic conic constrained decentralized optimization framework discussed in this paper. With the motivation of designing an efficient decentralized solution method for the distributed conic constrained problem over {static or time-varying communication networks}, as we briefly described above, we propose \emph{distributed} primal-dual algorithms: DPDA for static and DPDA-TV for time-varying communication networks. DPDA and DPDA-TV are both based on the primal-dual algorithm~(PDA), recently proposed in~\cite{chambolle2015ergodic} for convex-concave saddle-point problems which for sake of completeness will be discussed in {detail in Section~\ref{sec:pda}.}

\textbf{Problem Description.} {Let $\{\cG^t\}_{t\in\reals_+}$ denote a time-varying graph of $N$ computing nodes. More precisely, for all $t\geq 0$, the graph has the form $\cG^t=(\cN,\cE^t)$, where $\cN\triangleq\{1,\ldots,N\}$ is the set of nodes and $\mathcal{E}^t\subseteq \mathcal{N}\times \mathcal{N}$ 
is the set of (possibly directed) edges at time $t$.} Suppose that each node $i\in\cN$ has a \emph{private} (local) cost function ${\varphi_i}:\reals^n\rightarrow\reals\cup\{+\infty\}$ such that
{
\begin{equation}
\label{eq:F_i}
{\varphi_i}(x)\triangleq \rho_i(x) + f_i(x),
\end{equation}
}%
where $\rho_i: \mathbb{R}^n \rightarrow \mathbb{R}\cup\{+\infty\}$ is a possibly \emph{non-smooth} convex function, and $f_i: \mathbb{R}^n \rightarrow \mathbb{R}$ is a \emph{smooth} convex function. We assume $f_i$ is differentiable on an open set containing $\dom \rho_i$ with a Lipschitz continuous gradient $\grad f_i$, of which Lipschitz constant is $L_i$; and the prox map of $\rho_i$,
{
\begin{equation}
\label{eq:prox}
\prox{\rho_i}(x)\triangleq\argmin_{y \in \reals^n} \left\{ \rho_i(y)+\tfrac{1}{2}\norm{y-x}^2 \right\},
\end{equation}
}%
is \emph{efficiently} computable for $i\in\cN$, where $\norm{.}$ denotes the Euclidean norm.
Consider the following minimization problem:
{
\begin{align}\label{eq:central_problem}
x^*\in \argmin_{x\in\reals^n}\  \bar{\varphi}(x) \triangleq \sum_{i\in \mathcal{N}}\varphi_i(x)\quad \hbox{s.t.}\quad A_ix -b_i \in \mathcal{K}_i,\quad \forall{i}\in\mathcal{N},
\end{align}
}%
\noindent where $A_i\in \mathbb{R}^{m_i\times n}$, $b_i\in \mathbb{R}^{m_i}$ and $\mathcal{K}_i\subseteq{R}^{m_i}$ is a closed, convex cone. Suppose that projections onto $\cK_i$ can be computed efficiently, while the projection onto the \emph{preimage} ${\chi_i\triangleq} A_i^{-1}(\cK_i+b_i)$ is assumed to be \emph{impractical}, e.g., when $\cK_i$ is the positive semidefinite cone, projection to preimage requires solving an SDP.
\begin{assumption}
\label{assump:saddle-point}
The duality gap for \eqref{eq:central_problem} is zero, and a primal-dual solution to~\eqref{eq:central_problem} exists.
\end{assumption}
\vspace*{-1mm}
A sufficient condition 
is the existence of a Slater point, i.e., there exists $\bar{x}\in \mathbf{relint}(\dom \bar{\varphi})$ such that $A_i\bar{x}-b_i\in \mathbf{int}(\cK_i)$ for $i\in\cN$, where $\dom\bar{\varphi}=\cap_{i\in\cN}\dom\varphi_i$.
\begin{defn}
\label{def:sconvex}
A differentiable function ${f}:\reals^n \rightarrow \reals$ is {\it strongly convex} 
with modulus $\mu>0$
if the following inequality holds
{
\begin{align*}
{f(x)\geq f(\bar{x})+\fprod{\nabla f(\bar{x}),x-\bar{x}}}+\frac{\mu}{2}\norm{x-\bar{x}}^2\qquad \forall x,\bar{x}\in\reals^n.
\end{align*}}
\end{defn}
\begin{assumption}
\label{assump:sconvex}
{Suppose ${\bar{f}(x)}\triangleq\sum_{i\in\cN} f_i(x)$ is strongly convex with modulus $\bar{\mu}>0$; and each $f_i$ is strongly convex with modulus $\mu_i\geq 0$ 
for $i\in\cN$, and define {$\ubar{\mu}\triangleq\min_{i\in\cN}\{\mu_i\}\geq 0$}. 
}
\end{assumption}
\begin{remark}
{Clearly $\bar{\mu}\geq \sum_{i\in\cN}\mu_i$ is always true, and it is possible that $\mu_i=0$ for all $i\in\cN$ but still $\bar{\mu}>0$; moreover, $\bar{\mu}>0$ implies that $x^*$ is the unique optimal solution to \eqref{eq:central_problem}.}
\end{remark}

\textbf{Previous Work.} Consider $\min_{x\in\reals^n}\{\bar{\varphi}(x):\ x\in\cap_{i\in\cN}\chi_i\}$ over a communication network of computing agents $\cN$, where $\bar{\varphi}(x)=\sum_{i\in\cN}\varphi_i(x)$.  Although the \emph{unconstrained} consensus optimization, i.e., $\chi_i=\reals^n$, is well studied for static or time-varying networks -- see~\cite{Nedic08_1J,nedic2014distributed} and the references therein,
the \emph{constrained} case 
is still an area of active research, e.g.,
\cite{Nedic08_1J,nedic2014distributed,nedic2010constrained,srivastava2010distributed,zhu2012distributed,yuan2011distributed,chang2014distributed,mateos2015distributed,chang2014proximal,aybat2016primal}.
Our focus in this paper is on the case where $\bar{\varphi}$ is strongly convex such that each $\varphi_i=\rho_i+f_i$ is composite convex, and $\chi_i$ has the form $A_i^{-1}(\cK_i+b_i)$ for $i\in\cN$. In this section, we briefly review the existing work related to our setup.

{Unconstrained} minimization of a \emph{strongly} convex objective function {$\bar{f}(x)\triangleq\sum_{i\in\cN}f_i(x)$} in the multi-agent setting has been investigated in many papers, e.g., \cite{makhdoumi17,shi2015extra,zeng2015extrapush,Dextra16,xi2016add} considered static communication networks $\cG=(\cN,\cE)$ while~\cite{nedic2016stochastic,nedich2016achieving} studied the time-varying networks. In the rest, suppose that $\mu_i\geq 0$ denotes the convexity modulus of $f_i$ for $i\in\cN$. {In~\cite{makhdoumi17}, Makhdoumi and Ozdaglar proposed a distributed ADMM to solve $\min_x\bar{f}(x)$ over a time-invariant~(static), undirected network; they show that when $f_i$ has Lipschitz continuous gradient with constant $L_i$ and when $\mu_i>0$ for each $i\in\cN$, the local iterates at all nodes are within an $\epsilon$-ball of the optimal solution after at most $\cO(\sqrt{\kappa}~\log(1/\epsilon))$ iterations, where $\kappa=L_{\max}/\ubar{\mu}$, $L_{\max}\triangleq \max_{i\in\cN}L_i$ and $\ubar{\mu}\triangleq\min_{i\in\cN}\{\mu_i\}$; on the other hand, since each iteration requires exact minimization of an augmented function involving $f_i$ at each $i\in\cN$, iterations can be very costly depending on $f_i$.} In~\cite{chang2015multi}, Chang et al. considered the composite convex minimization problem, $\min_x \sum_{i\in\cN} \rho_i(x)+f_i(C_i x)$, over a static undirected network $\cG$, where $\rho_i$ is merely convex and $f_i$ is strongly convex with a Lipschitz continuous gradient for $i\in\cN$. A method based on ADMM taking proximal-gradient steps, IC-ADMM, is proposed to reduce the computational work of ADMM due to exact minimizations required in each iteration. Under the assumption that the smallest eigenvalue of the un-oriented Laplacian of $\cG$ is known at all agents, it is shown that IC-ADMM sequence converges when each $f_i$ is strongly convex -- no rate result is provided for this case; on the other hand, linear convergence is established in the absence of the merely convex (possibly non-smooth) term $\rho_i$ and assuming each $C_i$ has full column-rank in addition to the previous assumptions required for establishing the convergence result. In a similar spirit, to overcome the costly exact minimizations required in ADMM, an exact first-order algorithm (EXTRA) is proposed in~\cite{shi2015extra} for minimizing 
$\bar{f}$ over {an} undirected static network $\cG$. When 
$\bar{f}$ is smooth and strongly convex with modulus $\bar{\mu}>0$, it is shown that the algorithm has linear convergence {without assuming each 
$f_i$ to be strongly convex provided that the step-size $\alpha>0$, constant among all the nodes, is sufficiently small, i.e., $\alpha=\cO(\bar{\mu}/L_{\max}^2)$}. In a follow up work, Extra-Push~\cite{zeng2015extrapush} has been proposed that extends EXTRA to handle strongly connected, directed static networks using push-sum protocol. Convergence of Extra-Push, without providing any rate, has been shown under boundedness assumption on the iterate sequence; moreover, under the assumption that the stationary distribution, $\phi\in\reals^{|\cN|}$, of the column-stochastic mixing matrix that represents the static directed network is known, i.e., each node $i\in\cN$ knows $\phi_i>0$, they relax the boundedness assumption on the iterate sequence, and show that a variant of Extra-Push converges at a linear rate if each $f_i$ is smooth and strongly convex with $\mu_i>0$ for $i\in\cN$ -- note that assuming each node $i\in\cN$ knows $\phi_i$ \emph{exactly} is a fairly strong assumption in a decentralized optimization setting. In~\cite{Dextra16}, Xi et al. also combined EXTRA with the push-sum protocol to obtain DEXTRA to minimize strongly convex $\bar{f}$ over a static directed network. In addition to assumptions on $\{f_i\}_{i\in\cN}$ in~\cite{zeng2015extrapush}, by further assuming that $\grad f_i$ bounded over $\reals^n$ for $i\in\cN$, which implies boundedness of the iterate sequence,
it is shown that the iterate sequence converges linearly when the constant step-size $\alpha$, fixed for all $i\in\cN$, is chosen carefully belonging to a non-trivial interval $[\alpha_{\min},\alpha_{\max}]$ such that $\alpha_{\min}>0$ -- note that the boundedness on each $\grad f_i$ is a strong requirement and clearly it is not satisfied by commonly used quadratic loss function. 
In a follow up paper~\cite{xi2016add}, Xi and Khan proposed Accelerated Distributed Directed Optimization (ADD-OPT) where they improved on the nontrivial step-size condition of DEXTRA and showed that the iterates converge linearly when the constant step-size $\alpha$ is chosen sufficiently small -- assuming that the directed network topology is static and each $f_i$ is strongly convex with Lipschitz continuous gradients (without assuming boundedness as in~\cite{Dextra16}). In a more general setting, Nedi\'{c} and Olshevsky~\cite{nedic2016stochastic} proposed a stochastic (sub)gradient-push for minimizing strongly convex $\bar{f}$ on time-varying directed graphs without assuming differentiability when {the stochastic error in subgradient samples has zero mean and bounded standard deviation}.
{When $\mu_i>0$ for all $i\in\cN$, choosing a diminishing step-size sequence, they were able to show $\cO(\log(k)/k)$ rate result provided that the iterate sequence stays bounded -- the boundedness assumption on the iterate sequence can be removed by assuming that functions are smooth, having Lipschitz continuous gradients.}
In \cite{nedich2016achieving}, Nedi\'{c} et al. proposed distributed inexact gradient methods referred to as DIGing and Push-DIGing for time-varying undirected and directed networks, respectively. Assuming $f_i$ is strongly convex with Lipschitz continuous gradient for each $i\in\cN$, it is shown that the iterate sequence converges linearly provided that the constant step-size $\alpha$, fixed for all $i\in\cN$, is chosen sufficiently small.

For constrained consensus optimization, other than few exceptions, e.g.,~\cite{zhu2012distributed,yuan2011distributed,chang2014distributed,mateos2015distributed,chang2014proximal,aybat2016primal}, the existing methods require that each node compute a projection on the local set $\chi_i$ in addition to consensus and (sub)gradient steps, e.g., \cite{nedic2010constrained,srivastava2010distributed}. Moreover, among those few exceptions, 
only \cite{chang2014distributed,mateos2015distributed,chang2014proximal,aybat2016primal} can handle agent-specific constraints without assuming global knowledge of the constraints by all agents. However, \emph{no} rate results in terms of suboptimality, local infeasibility, and consensus violation exist for the primal-dual distributed methods in~\cite{chang2014distributed,mateos2015distributed,chang2014proximal} when implemented for the agent-specific \emph{conic} constraint sets $\chi_i=\{x:A_ix-b_i\in \cK_i\}$ studied in this paper. 
In~\cite{chang2014distributed}, a consensus-based distributed primal-dual perturbation (PDP) algorithm using a diminishing step-size sequence is proposed. The objective is to minimize a composition of a global network function (smooth) with the sum of local objective functions (smooth), i.e., $\cF(\sum_{i\in\cN}f_i(x))$, subject to local compact sets and inequality constraints on the summation of agent specific constrained functions, i.e., $\sum_{i\in\cN}g_i(x)\leq 0$, over a time-varying directed network. They showed that the local primal-dual iterate sequence converges to a global optimal primal-dual solution; however, no rate result was provided. The proposed PDP method can also handle non-smooth constraints with similar convergence guarantees.
In a recent work~\cite{mateos2015distributed},
the authors proposed a distributed algorithm on time-varying directed networks for solving saddle-point problems subject to consensus constraints. The algorithm can also solve consensus optimization problems with \emph{inequality} constraints that can be written as summation of local convex functions of local and global variables. 
It is shown that using a carefully selected decreasing step-size sequence, the ergodic average of primal-dual sequence converges with $\mathcal{O}(1/\sqrt{k})$ rate in terms of saddle-point evaluation error; however, when applied to constrained optimization problems, \emph{no} rate in terms of either suboptimality or infeasibility is provided. 
{In~\cite{chang2014proximal}, a closely related paper to ours, a proximal dual consensus ADMM method, PDC-ADMM,
is proposed by Chang to minimize $\bar{\varphi}$ subject to a coupling equality and agent-specific constraints over both static and time-varying undirected networks -- for the time-varying topology, they assumed that agents are on/off and communication links fail randomly with certain probabilities. 
Each agent-specific set is assumed to be an intersection
of a polyhedron and a ``simple" compact set. More precisely, the goal is to solve $\min_x\{\sum_i\varphi_i(x_i): \sum_{i\in\cN}C_ix_i=d,~x_i\in\chi_i~i\in\cN\}$ where $\varphi_i=\rho_i+f_i$ is composite convex, $\chi_i=\{x_i: A_ix_i\geq b_i, x_i\in\cS_i\}$ and $\cS_i$ is a convex compact set. Clearly, by properly choosing the primal constraint $\sum_{i\in\cN}C_ix_i=d$ one can impose consensus on $\{x_i\}_{i\in\cN}$. The polyhedral constraints defining $\chi_i$ are handled using a penalty formulation without requiring projection onto them. 
It is shown that both for static and time-varying cases, PCD-ADMM have $\cO(1/k)$ ergodic convergence rate in the mean for suboptimality and infeasibility when each $f_i$ is strongly convex and differentiable with a Lipschitz continuous gradient for $i\in\cN$.} More recently, in~\cite{aybat2016primal}, Aybat and Yazdandoost Hamedani proposed a distributed primal-dual method to solve \eqref{eq:central_problem} when $\varphi_i=\rho_i+f_i$ is composite convex. Assuming $f_i$ is smooth, $\cO(1/k)$ ergodic rate is shown for suboptimality and infeasibility. In this paper, we aim to improve on this rate by further assuming $\sum_i\varphi_i$ is strongly convex to achieve $\cO(1/k^2)$ ergodic rate.

\textbf{Contribution.} To the best of our knowledge, only a handful of methods, e.g.,~\cite{chang2014distributed,mateos2015distributed,chang2014proximal,aybat2016primal} can handle consensus problems, similar to \eqref{eq:central_problem}, with agent-specific \emph{local} constraint sets $\{\chi_i\}_{i\in\cN}$ without requiring each agent $i\in\cN$ to project onto $\chi_i$. However, no rate results in terms of suboptimality, local infeasibility, and consensus violation exist for the 
distributed methods in \cite{chang2014distributed,mateos2015distributed,chang2014proximal} when implemented for \emph{conic}
sets $\{\chi_i\}_{i\in\cN}$ studied in this paper; moreover, none of these four methods exploits the strong convexity of the sum function $\bar{\varphi}=\sum_{i\in\cN}\varphi_i$. We believe 
DPDA and DPDA-TV proposed in this paper is one of the first decentralized algorithms to solve \eqref{eq:central_problem} with $\cO(1/k^2)$ ergodic rate guarantee on both sub-optimality and infeasibility.
More precisely, we show that when $\bar{\varphi}$ is strongly convex and each $\varphi_i$ is composite convex with smooth $f_i$ for $i\in\cN$, our proposed method reduces the suboptimality and infeasibility with $\mathcal{O}(1/{k^2})$ rate as $k$, the number primal-dual iterations, increases, and it requires $\cO(k)$ and $\cO(k\log(k))$ local communications for all $k$ iterations in total when the network topologies are static and time-varying, respectively. To the best of our knowledge, this is the best rate result for our 
setting. Moreover, the proposed methods
do not require the agents 
to know any global parameter depending on the entire network topology, e.g., the second smallest eigenvalue of the Laplacian.

\textbf{Notation.} Throughout 
$\norm{.}$ denotes either the Euclidean norm {or the spectral norm}, and $\fprod{\theta,w}\triangleq \theta^\top w$ for $\theta,w\in\reals^n$. Given a convex set $\cS$, let $\sigma_{\cS}(.)$ denote its support function, i.e., $\sigma_{\cS}(\theta)\triangleq\sup_{w\in \cS}\fprod{\theta,~w}$, let $\ind{S}(\cdot)$ denote the indicator function of $\cS$, i.e., $\ind{S}(w)=0$ for $w\in\cS$ and equal to $+\infty$ otherwise, and let $\cP_{\cS}(w)\triangleq\argmin\{\norm{v-w}:\ v\in\cS\}$ denote the projection onto $\cS$. For a closed convex set $\cS$, we define the distance function as $d_{\cS}(w)\triangleq\norm{\cP_{\cS}(w)-w}$. Given a convex cone $\cK\in\reals^m$, let $\mathcal{K}^*$ denote its dual cone, i.e., $\mathcal{K}^*\triangleq\{\theta\in\reals^{m}: \ \langle \theta,w\rangle \geq 0\ \ \forall w\in\mathcal{K}\}$, and $\cK^\circ\triangleq -\cK^*$ denote the polar cone of $\cK$. Note that for a given cone $\cK\in\reals^m$, $\sigma_{\cK}(\theta)=0$ for $\theta\in\cK^\circ$ and equal to $+\infty$ if $\theta\not\in\cK^\circ$, i.e., $\sigma_{\cK}(\theta)=\ind{\cK^\circ}(\theta)$ for all $\theta\in\reals^m$.
Given a convex function $g:\reals^n\rightarrow\reals\cup\{+\infty\}$, its convex conjugate is defined as $g^*(w)\triangleq\sup_{\theta\in\reals^n}\fprod{w,\theta}-g(\theta)$. $\otimes$ denotes the Kronecker product, $\one_n\in \mathbb{R}^{n}$ be the vector all ones, $\id_n$ is the $n\times n$ identity matrix. $\sy^n_{++}$ $(\sy^n_+)$ denotes the cone of symmetric positive (semi)definite matrices. For $Q\succ 0$, i.e., 
$Q\in\sy^n_{++}$, $Q$-norm is defined as $\norm{z}_Q\triangleq \sqrt{z^\top Q z}$. Given $Q\in\sy^n_{+}$, $\lambda_{\min}^+(W)$ denotes the smallest positive eigenvalue of $Q$. $\Pi$ denotes the Cartesian product. Finally, for $\theta\in\reals^n$, we adopt $(\theta)_+\in\reals^n_{+}$ to denote $\max\{\theta, \textbf{0}\}$ where max is computed componentwise.
\vspace*{-3mm}
\subsection{Preliminary} \label{sec:pda}
Let $\cX$ and $\cY$ be finite-dimensional vector spaces. In a recent paper, Chambolle and Pock~\cite{chambolle2015ergodic} proposed a primal-dual algorithm~(PDA) for the following convex-concave saddle-point problem:\vspace*{-2mm}
{\small
\begin{align}
\label{eq:saddle-problem}
\min_{\bx\in\cX}\max_{\by\in\cY}\cL(\bx,\by)\triangleq\Phi(\bx)+\fprod{T\bx,\by}-h(\by),\quad \hbox{\normalsize where}\quad \Phi(\bx)\triangleq\rho(\bx)+g(\bx)\vspace*{-2mm}
\end{align}
}%
{is a {\it strongly convex} function with modulus $\mu$ such that} $\rho$ and $h$ are possibly non-smooth convex functions, $g$ is convex and has a Lipschitz continuous gradient defined on $\dom \rho$ with Lipschitz constant $L$, and {$T$ is a linear map}. Given some positive step-size sequences $\{\tau^k,\kappa^k,\eta^k\}_{k\geq 0}$ and the initial iterates $\bx^0,\by^0$, PDA consists of two proximal-gradient steps:
\vspace*{-2mm} 
{\small
\begin{subequations}
\label{eq:PDA}
\begin{align}
\by^{k+1}&\gets\argmin_{\by} h(\by)-\fprod{T(\bx^k+\eta^k(\bx^{k}-\bx^{k-1})),~\by}+{D_k(\by,{\by}^k)},
\label{eq:PDA-y}\\
\bx^{k+1}&\gets\argmin_{\bx} \rho(\bx)+g(\bx^k)+\fprod{\grad g(\bx^k),~\bx-\bx^k}+\fprod{T\bx,~\by^{k+1}}+\frac{1}{2\tau^k}\norm{\bx-{\bx}^k}^2,
\label{eq:PDA-x}
\end{align}
\end{subequations}
}%
{where $D_k$ is a Bregman distance function such that $D_k(\by,\bar{\by})\geq\tfrac{1}{2\kappa^k}\norm{\by-\bar{\by}}^2$ for any $\by$ and $\bar{\by}$ and $k\geq 0$.}
In~\cite{chambolle2015ergodic}, a simple proof for the ergodic convergence is provided for~\eqref{eq:PDA}; indeed, it is shown that
{when the convexity modulus for $\rho$ and {$g$} are $\mu$ and $0$, resp.,} and
if $\tau^k,\kappa^k,\eta^k>0$ are chosen such that $\frac{1}{\tau^k}+\mu\geq \frac{1}{\tau^{k+1}\eta^{k+1}}$, $(\frac{1}{\tau^k}-L)\geq\norm{T}^2\kappa^k$, and $\kappa^k=\kappa^{k+1}\eta^{k+1}$ for all $k\geq 0$, then
{\small
\begin{equation}
\label{eq:DPA-rate}
N_K\left(\cL(\bar{\bx}^K,\by)-\cL(\bx,\bar{\by}^K)\right)+\frac{\kappa^{K}}{\tau^K}~\frac{1}{2\kappa^0}\norm{\bx-\bar{\bx}^K}^2\leq \frac{1}{2\tau^0}\norm{\bx-\bx^0}^2+{D_0(\by,\by^0)},
\end{equation}
}%
for all $\bx,\by\in\cX\times\cY$, where $N_K\triangleq\sum_{k=1}^{K}\frac{\kappa^{k-1}}{\kappa^0}$, $\bar{\bx}^{K}\triangleq N_K^{-1}\sum_{k=1}^{K}\frac{\kappa^{k-1}}{\kappa^0}\bx^k$ and $\bar{\by}^{K}\triangleq N_K^{-1}\sum_{k=1}^{K}\frac{\kappa^{k-1}}{\kappa^0}\by^k$ for all $K\geq 1$. {In~\cite{chambolle2015ergodic}, it is shown that $\{\tau^k,\kappa^k,\eta^k\}_{k\geq 0}$ can be chosen such that $N_k=\cO(k^2)$, $\tau^k=\cO(1/k)$ and $\kappa^k=\cO(k)$ for $k\geq 0$.}

First, in Section~\ref{sec:static}, we discuss a special case of \eqref{eq:saddle-problem}, which will help us develop a decentralized primal-dual algorithm, DPDA, for the consensus optimization problem in~\eqref{eq:central_problem} when the communication network topology is static, and we provide the main results 
{for the} static case in Theorem~\ref{thm:static-error-bounds}.
Next, in Section~\ref{sec:dynamic}, we propose a decentralized algorithm DPDA-TV to solve \eqref{eq:central_problem} when the network topology is time-varying, and we extend our convergence results to time-varying case in Theorem~\ref{thm:dynamic-error-bounds}. Finally, in Section~\ref{sec:numerics}, we test the performance of the proposed methods for solving distributed constrained LASSO problems. 
\noindent \section{A distributed method for a static network topology}
\label{sec:static}
In this section we discuss how 
PDA, stated in \eqref{eq:PDA}, can be implemented to compute an $\sqrt{\epsilon}$-optimal solution to \eqref{eq:central_problem} in a distributed way using only $\cO(1/\sqrt{\epsilon})$ communications over a \emph{static} communication network $\cG$ using only local communications.
{Let $\cG=(\cN,\cE)$ denote a \emph{connected} undirected graph of $N$ computing nodes, where $\cN\triangleq\{1,\ldots,N\}$ and $\mathcal{E}\subseteq \mathcal{N}\times \mathcal{N}$ denotes the set of edges -- without loss of generality assume that $(i,j)\in \mathcal{E}$ implies $i< j$. Suppose nodes $i$ and $j$ can exchange information only if $(i,j) \in \cE$. Let $\mathcal{N}_i\triangleq\{j\in\mathcal{N} : (i, j) \in \mathcal{E} \text{ or } (j, i) \in \mathcal{E}\}$ denote the set of neighboring nodes of $i \in \mathcal{N}$, and $d_i\triangleq |\mathcal{N}_i|$ is the degree of node $i\in \mathcal{N}$.}

Let $x_i\in\reals^n$ denote the \emph{local} decision vector of node $i\in\cN$. By taking advantage of the fact that $\cG$ is {\it connected}, we can reformulate \eqref{eq:central_problem} as a 
{\it consensus} optimization problem:
{\small
\begin{align}\label{eq:dist_problem}
\min_{\substack{x_i\in\reals^n,\ i\in\cN}} \left\{\sum_{i\in \mathcal{N}}{\varphi_i(x_i)}~|~x_i=x_j:~\lambda_{ij},\ \forall (i,j)\in \mathcal{E},\quad  A_ix_i-b_i \in \mathcal{K}_i:\ \theta_i,\ \forall{i}\in\mathcal{N} \right\},
\end{align}
}%
where $\lambda_{ij}\in\reals^n$ and $\theta_i\in\reals^{m_i}$ are the corresponding dual variables. Let $\bx=[x_i]_{i\in\cN}\in\reals^{n|\cN|}$. The consensus constraints $x_i=x_j$ for $(i,j)\in \mathcal{E}$ can be formulated as $M{\bf x}=0$, where $M\in \mathbb{R}^{n|\mathcal{E}|\times n|\mathcal{N}|}$ is a block matrix such that $M=H\otimes \id_n$ where $H$ is the oriented edge-node incidence matrix, i.e., the entry $H_{(i,j),l}$, corresponding to edge $(i,j)\in \mathcal{E}$ and node $l\in \mathcal{N}$, is equal to $1$ if $l=i$, $-1$ if $l=j$, and $0$ otherwise.
Note that $M^\T M=H^\T H\otimes \id_n=\Omega\otimes \id_n$, where $\Omega\in \mathbb{R}^{|\mathcal{N}|\times |\mathcal{N}|}$ denotes the graph Laplacian of $\cG$, i.e., $\Omega_{ii}=d_i$, $\Omega_{ij}=-1$ if $(i,j)\in\cE$ or $(j,i)\in\cE$, and equal to $0$ otherwise. 

Since $x^*$ is the unique solution to \eqref{eq:central_problem} and \eqref{eq:dist_problem}, and since $\bx^*\triangleq\ones\otimes x^*$ satisfies $(\Omega\otimes\id_n)\bx^*=0$, one can reformulate \eqref{eq:dist_problem} as a saddle point problem. Indeed, let $\bx=[x_i]_{i\in\cN}$, $\by=[\btheta^\top \blambda^\top]^\top$ such that $\btheta=[\theta_i]_{i\in\cN}$ and $\blambda=[\lambda_{ij}]_{(i,j)\in\cE}$, then for any $\alpha\geq 0$, one can compute a primal-dual optimal solution to \eqref{eq:central_problem} through solving
{\small
\begin{align}\label{eq:static-saddle}
\min_{{\bf x}}\max_{\by}\mathcal{L}({\bf x}, \by)\triangleq \frac{\alpha}{2}\norm{\bx}_{\Omega\otimes\id_n}^2+ \sum_{i\in\mathcal{N}}\bigg(\varphi_i(x_i) +\langle \theta_i,A_ix_i-b_i \rangle -\sigma_{\mathcal{K}_i}(\theta_i) \bigg)+\langle \blambda, M{\bf x}\rangle.
\end{align}
}%
Next, we consider implementation of PDA in \eqref{eq:PDA} 
to solve \eqref{eq:static-saddle} for some $\alpha\geq 0$.
\begin{defn}
\label{def:bregman-s}
Let $\cX\triangleq\Pi_{i\in\cN}\reals^n$ and $\cX\ni\bx=[x_i]_{i\in\cN}$; $\cY\triangleq\Pi_{i\in\cN}\reals^{m_i}\times\reals^{n|\cE|}$,  $\cY\ni\by=[\btheta^\top \blambda^\top]^\top$ such that $\btheta=[\theta_i]_{i\in\cN}\in\reals^m$ and $\blambda=[\lambda_{ij}]_{(i,j)\in\cE}\in\reals^{m_0}$, where $m\triangleq\sum_{i\in\cN}{m_i}$, and {$m_0\triangleq n|\cE|$}. Given parameters $\gamma^k>0$ and $\kappa_i^k>0$ for $i\in\cN$, let $\mathbf{D}_{\gamma^k}\triangleq\frac{1}{\gamma^k}\id_{m_0}$, $\mathbf{D}_{\kappa^k}\triangleq\diag([\frac{1}{\kappa_i^k}\id_{m_i}]_{i\in\cN})$,
 and $\mathbf{D}_{\kappa^k,\gamma^k}\triangleq \begin{bmatrix} \bD_{\kappa^k}& \zero \\ \zero & \bD_{\gamma^k} \end{bmatrix}$.
\end{defn}
\begin{defn}
\label{def:problem-components-static}
Let $\Phi,\varphi:\cX\rightarrow\reals\cup\{\infty\}$ such that $\Phi(\bx)=\rho(\bx)+g(\bx)$ and $\varphi(\bx)=\rho(\bx)+f(\bx)$ where $\rho(\bx)\triangleq\sum_{i\in\cN}\rho_i(x_i)$, $f(\bx)\triangleq\sum_{i\in\cN}f_i(x_i)$, and $g(\bx)\triangleq f(\bx)+\frac{\alpha}{2}\norm{\bx}_{\Omega\otimes\id_n}^2$, and let $h:\cY\rightarrow\reals\cup\{\infty\}$ such that $h(\by)\triangleq \sum_{i\in\cN}\sigma_{\mathcal{K}_i}(\theta_i)+\fprod{b_i,\theta_i}$. Define the block-diagonal matrix $A\triangleq\diag([A_i]_{i\in\mathcal{N}})\in\reals^{m \times n|\cN|}$ and $T=[A^\top~M^\top]^\top$.
\end{defn}
Given some positive parameters $\gamma^k,\tau^k>0$, $\kappa_i^k>0$ for $i\in\cN$ -- we shortly discuss how to select them, we define the Bregman function $D_k(\by,\bar{\by})=\frac{1}{2}\norm{\by-\bar{\by}}_{\mathbf{D}_{\kappa^k,\gamma^k}}^2$ for each $k\geq 0$. Hence, given $\Phi$, $h$ and $T$ as in Definition~\ref{def:problem-components-static}, and the initial iterates $\bx^0$ and $\by^0=[{\btheta^0}^\top {\blambda^0}^\top]^\top$, the PDA iterations given in \eqref{eq:PDA} take the following form: 
\vspace*{-1mm}
{\small
\begin{subequations}
\label{eq:pock-pd}
\begin{align}
\theta_i^{k+1}\gets\argmin_{\theta_i}~&\sigma_{\mathcal{K}_i}(\theta_i) -\langle A_i(x_i^{k}+\eta^k(x_i^k-x_i^{k-1}))-b_i,~\theta_i\rangle +{1\over 2\kappa_i^k}\|\theta_i-\theta_i^k\|^2, \ \ \ \ i\in\mathcal{N} \label{eq:pock-pd-theta-s}\\
{\blambda^{k+1}}\gets\argmin_{\blambda}~& -\langle M({\bf x}^{k}+\eta^k ({\bf x}^k-\bx^{k-1})),~\blambda \rangle +{1\over 2\gamma^k}\|\blambda-\blambda^k\|^2 \label{eq:pock-pd-lambda-s}\\
&=\blambda^k+\gamma^k M({\bf x}^{k}+\eta^k({\bf x}^k-\bx^{k-1})), \label{eq:pock-pd-lambda} 
\\
{\bf x}^{k+1}\gets \argmin_{{\bf x}}~&~\langle \blambda^{k+1},~ M{\bf x} \rangle+ \fprod{\nabla g(\bx^k),~\bx}+\sum_{i\in\mathcal{N}}\Big[ \rho_i(x_i)
+\langle A_ix_i-b_i,\theta_i^{k+1}\rangle+\frac{1}{2\tau^k}\|x_i-x_i^k\|^2\Big]  \nonumber \\
=\argmin_{{\bf x}}~&\sum_{i\in\mathcal{N}}\Big[ \rho_i(x_i)+\langle {\nabla f_i(x_i^k)},~x_i\rangle +\langle A_ix_i-b_i,\theta_i^{k+1}\rangle+{1\over 2\tau^k}\|x_i-x_i^k\|^2\Big]  \nonumber \\
&+\langle \blambda^{k+1}, M{\bf x} \rangle + \alpha\fprod{(\Omega\otimes \id_n)\bx^k,~\bx}. 
\end{align}
\end{subequations}
}%
Since $\cK_i$ is a cone, $\prox{\kappa_i^k\sigma_{\cK_i}}(\cdot)=\cP_{\cK_i^\circ}(\cdot)$; hence, $\theta_i^{k+1}$ can be written in closed form as   
{\small
\begin{align}
\theta_i^{k+1}=\cP_{\cK_i^\circ}\Big(\theta_i^k+\kappa_i^k\Big(A_i(x_i^{k}+\eta^k(x_i^k-x_i^{k-1}))-b_i\Big)\Big), \ \ \ \ i\in\mathcal{N}. \nonumber
\end{align}
}%
Using recursion in~\eqref{eq:pock-pd-lambda}, 
we can write $\blambda^{k+1}$ as a partial summation of primal iterates {$\{\bx^\ell\}_{\ell=0}^k$},
i.e., $\blambda^{k+1}=\blambda^0+\sum_{\ell=0}^{k} \gamma^\ell M({\bf x}^{\ell}+{\eta^\ell}({\bf x}^\ell-\bx^{\ell-1}))$ for $k\geq 0$. Let $\blambda^0\gets {\mathbf{0}}$, and define $\{\bs^k\}_{k\geq 0}$ such that {$\bs^0=\mathbf{0}$} and {$\bs^{k+1} = \bs^k+ \gamma^k \big(\bx^k+ \eta^k (\bx^k - \bx^{k -1})\big)$} for $k\geq 0$; {hence, $\blambda^k=M\bs^{k}$ for $k\geq 0$. Using the fact that $M^\top M=\Omega\otimes \id_n$, we obtain}
{\small
\[
\textstyle\langle M{\bf x},~\blambda^{k+1} \rangle=~\langle {\bf x},~(\Omega \otimes \id_n)\bs^{k+1}\rangle=\sum_{i\in\mathcal{N}}\langle { x_i},~\sum_{j\in\mathcal{N}_i}(s^{k+1}_i-s^{k+1}_j)\rangle.\]
}%
Thus, PDA iterations given in~\eqref{eq:pock-pd} for the static graph $\cG$ can be computed in a \emph{decentralized} way, via the node-specific computations as in time-invariant distributed primal dual algorithm displayed in Fig.~\ref{alg:PDS} below.
\begin{figure}[htpb]
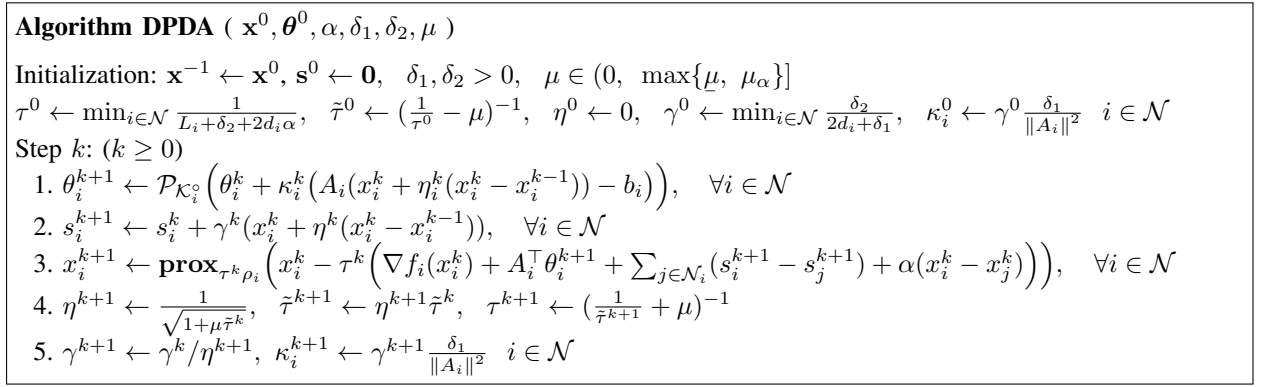

\centering
\framebox{\parbox{0.99\columnwidth}{
{\small
\textbf{Algorithm DPDA} ( $\bx^{0},\btheta^0,\alpha,{\delta_1,\delta_2},\mu$ ) \\[1.5mm]
Initialization: $\bx^{-1}\gets \bx^0$, $\bs^0\gets\mathbf{0},\ \ \delta_1,\delta_2>0,\ \ \mu\in(0,~\max\{\ubar{\mu},~\mu_\alpha\}]$\\
$\tau^0\gets\min_{i\in\cN}\frac{1}{L_i+\delta_2+2d_i\alpha},\ \ \tilde{\tau}^0\gets(\frac{1}{\tau^0}-\mu)^{-1},\ \ \eta^0\gets 0,\ \ \gamma^0\gets\min_{i\in\cN}\frac{\delta_2}{2d_i+\delta_1},\ \ \kappa_i^0\gets\gamma^0\frac{\delta_1}{\norm{A_i}^2}\ \ i\in\cN$\\
Step $k$: ($k \geq 0$)\\
\text{ } 1. $\theta_i^{k+1}\gets\cP_{\cK_i^\circ}\Big(\theta_i^k+\kappa_i^k\big(A_i(x_i^{k}+\eta_i^k(x_i^k-x_i^{k-1}))-b_i\big)\Big),\quad \forall i\in\cN$\\
\text{ } 2. $s_i^{k+1}\gets  s_i^k+\gamma^k (x_i^{k}+\eta^k(x_i^k-x_i^{k-1})),\quad \forall i\in\cN$ \\
\text{ } 3. $x_i^{k+1}\gets\prox{\tau^k\rho_i}\Big(x_i^k-\tau^k\Big(\nabla f_i(x_i^k)+A_i^\top\theta^{k+1}_i+\sum_{j\in\mathcal{N}_i}(s^{k+1}_i-s^{k+1}_j)+\alpha(x_i^k-x_j^{k})\Big)\Big),\quad \forall i\in\cN$ \\
\text{ } 4. $\eta^{k+1}\gets\frac{1}{\sqrt{1+\mu {\ttau}^k}},\ \ {\ttau}^{k+1}\gets \eta^{k+1} {\ttau}^k,\ \ \tau^{k+1}\gets(\frac{1}{\ttau^{k+1}}+\mu)^{-1}$\\
\text{ } 5. $\gamma^{k+1}\gets\gamma^k/\eta^{k+1},\ \kappa_i^{k+1}\gets\gamma^{k+1}\frac{{\delta_1}}{\norm{A_i}^2}\ \ i\in\cN$
 }}}
\caption{\small Distributed Primal Dual Algorithm for static $\cG$ (DPDA)}
\label{alg:PDS}
\vspace*{-3mm}
\end{figure}

\begin{defn}
\label{def:W}
Let $W\in\mathbb{S}^{|\cN|}$ such that $W_{ij}=W_{ji}<0$ for $(i,j)\in\cE$, $W_{ij}=W_{ji}=0$ for $(i,j)\notin \cE$, and $W_{ii}=-\sum_{j\in{\cN_i}}W_{ij}$ for $i\in\cN$.
\end{defn}
\begin{remark}
\label{rem:rs-convexity}
According Assumption~\ref{assump:sconvex}, when $\ubar{\mu}>0$, $f(\bx)=\sum_{i\in\cN}f_i(x_i)$ is strongly convex
with modulus $\ubar{\mu}$. That said, as emphasized in the introduction, although $\bar{f}(x)=\sum_{i\in\cN} f_i(x)$ is strongly convex with modulus $\bar{\mu}>0$, it is possible that $f$ may not when $\ubar{\mu}=0$.
\end{remark}
Inspired from Proposition 3.6. in \cite{shi2015extra}, we show that by suitably regularizing $f$, one can obtain a strongly convex function when $\ubar{\mu}=0$.
\begin{lemma}\label{lem:restricted-convex-static}
Consider $f(\bx)=\sum_{i\in\cN}f_i(x_i)$ under Assumption~\ref{assump:sconvex}, suppose $\ubar{\mu}=0$. Given $\alpha>0$ and {$W$ as in Definition~\ref{def:W}}, let $f_\alpha(\bx)\triangleq f(\bx)+\alpha~r(\bx)$, where 
$r(\bx)\triangleq \tfrac{1}{2}\norm{\bx}_{W\otimes\id_n}^2$.
Then $f_\alpha$ is strongly convex
with modulus ${\mu}_\alpha \triangleq \frac{\bar{\mu}/|\cN|~+\alpha {\lambda}_2}{2}-\big(\big(\frac{\bar{\mu}/|\cN|~-\alpha {\lambda}_2}{2}\big)^2+4\bar{L}^2\big)^{\nicefrac{1}{2}}>0$ for any $\alpha>\frac{4}{ {\lambda}_2 \bar{\mu}}\sum_{i\in\cN}L_i^2$, where $\bar{L}=\sqrt{\frac{\sum_{i\in\cN}L_i^2}{|\cN|}}$ and ${\lambda}_2=\lambda_{\min}^+(W)$. 
\end{lemma}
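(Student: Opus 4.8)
The plan is to verify the differential characterization of strong convexity in Definition~\ref{def:sconvex}. Since each $f_i$ is differentiable with $L_i$-Lipschitz gradient and $r$ is a convex quadratic, $f_\alpha$ is differentiable with $\nabla f_\alpha(\bx)=[\nabla f_i(x_i)]_{i\in\cN}+\alpha\,(W\otimes\id_n)\bx$, so it suffices to establish the strong--monotonicity inequality
\[
\langle\nabla f_\alpha(\bx)-\nabla f_\alpha(\bar\bx),\ \bx-\bar\bx\rangle\ \geq\ \mu_\alpha\,\|\bx-\bar\bx\|^2 ,
\]
which for quadratic $f_i$ (e.g.\ the distributed C-LASSO instance) is exactly Definition~\ref{def:sconvex} because the Hessian of $f_\alpha$ is then constant, and which in general is the strong-convexity estimate the subsequent primal--dual analysis invokes, with $\bar\bx$ the consensual reference point $\one_N\otimes\bar x$ (the ``restricted'' viewpoint of \cite{shi2015extra}). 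Writing $\mathbf z\triangleq\bx-\bar\bx$, the left-hand side equals $\sum_{i\in\cN}\langle\nabla f_i(x_i)-\nabla f_i(\bar x_i),z_i\rangle+\alpha\langle(W\otimes\id_n)\mathbf z,\mathbf z\rangle$. The idea is that the first sum supplies curvature along the consensus subspace $\cC\triangleq\spn(\one_N)\otimes\reals^n=\ns(W\otimes\id_n)$ through strong convexity of $\bar f$, while the $W$-term supplies curvature on $\cC^{\perp}$ through $\lambda_2=\lambda_{\min}^+(W)$; the coupling between the two directions is absorbed by the Lipschitz constants $L_i$.

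Concretely, I would split $\mathbf z=\mathbf z_\parallel+\mathbf z_\perp$ orthogonally, with $\mathbf z_\parallel=\one_N\otimes\bar z$ the projection onto $\cC$ (so $\bar z=\tfrac1N\sum_{i\in\cN}z_i$) and $\mathbf z_\perp=[w_i]_{i\in\cN}$, $w_i\triangleq z_i-\bar z$; then $\|\mathbf z\|^2=\|\mathbf z_\parallel\|^2+\|\mathbf z_\perp\|^2$, $\|\mathbf z_\parallel\|^2=N\|\bar z\|^2$, $\|\mathbf z_\perp\|^2=\sum_i\|w_i\|^2$, and since connectedness of $\cG$ gives $\ns(W)=\spn(\one_N)$ we get $\langle(W\otimes\id_n)\mathbf z,\mathbf z\rangle=\langle(W\otimes\id_n)\mathbf z_\perp,\mathbf z_\perp\rangle\geq\lambda_2\|\mathbf z_\perp\|^2$. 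Taking $\bar x_i=\bar x$ for all $i$ and setting $\hat x=\tfrac1N\sum_ix_i$ (so $\bar z=\hat x-\bar x$ and $w_i=x_i-\hat x$), I would insert $\nabla f_i(\hat x)$ and $\nabla f_i(\bar x)$ into $\sum_i\langle\nabla f_i(x_i)-\nabla f_i(\bar x),z_i\rangle$ and bound it below using: strong convexity of $\bar f$ for the $\langle\nabla\bar f(\hat x)-\nabla\bar f(\bar x),\hat x-\bar x\rangle\geq\bar\mu\|\bar z\|^2=\tfrac{\bar\mu}{N}\|\mathbf z_\parallel\|^2$ piece; monotonicity of $\nabla f_i$ for the nonnegative $\langle\nabla f_i(x_i)-\nabla f_i(\hat x),x_i-\hat x\rangle$ pieces; and $\|\nabla f_i(x_i)-\nabla f_i(\hat x)\|\leq L_i\|w_i\|$, $\|\nabla f_i(\hat x)-\nabla f_i(\bar x)\|\leq L_i\|\bar z\|$, Cauchy--Schwarz over $i$, and $\big(\sum_iL_i^2\big)^{1/2}=\sqrt N\,\bar L$ for the remaining cross-terms, arriving at a bound of the form $-4\bar L\,\|\mathbf z_\parallel\|\,\|\mathbf z_\perp\|$ (there is some slack in this constant and $4$ is the value consistent with the stated $\mu_\alpha$).

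Assembling these would give
\[
\langle\nabla f_\alpha(\bx)-\nabla f_\alpha(\bar\bx),\bx-\bar\bx\rangle\ \geq\ \tfrac{\bar\mu}{N}\|\mathbf z_\parallel\|^2-4\bar L\,\|\mathbf z_\parallel\|\,\|\mathbf z_\perp\|+\alpha\lambda_2\|\mathbf z_\perp\|^2 ,
\]
a quadratic form in $(\|\mathbf z_\parallel\|,\|\mathbf z_\perp\|)$ with matrix $G=\left[\begin{smallmatrix}\bar\mu/N&-2\bar L\\ -2\bar L&\alpha\lambda_2\end{smallmatrix}\right]$; since $\|\mathbf z\|^2=\|\mathbf z_\parallel\|^2+\|\mathbf z_\perp\|^2$ the right-hand side is $\geq\lambda_{\min}(G)\|\mathbf z\|^2$, and a direct computation gives $\lambda_{\min}(G)=\tfrac{\bar\mu/N+\alpha\lambda_2}{2}-\big((\tfrac{\bar\mu/N-\alpha\lambda_2}{2})^2+4\bar L^2\big)^{1/2}=\mu_\alpha$, proving the claimed modulus. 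Finally $\mu_\alpha=\lambda_{\min}(G)>0$ iff $G\succ0$; as the trace $\bar\mu/N+\alpha\lambda_2>0$ automatically, this reduces to $\det G=\tfrac{\bar\mu}{N}\alpha\lambda_2-4\bar L^2>0$, i.e.\ $\alpha>\tfrac{4N\bar L^2}{\bar\mu\lambda_2}=\tfrac{4}{\lambda_2\bar\mu}\sum_{i\in\cN}L_i^2$, which is the stated threshold (using $\bar L^2=\tfrac1N\sum_iL_i^2$).

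The main obstacle is the lower bound on $\sum_{i\in\cN}\langle\nabla f_i(x_i)-\nabla f_i(\bar x_i),z_i\rangle$: strong convexity is assumed only for the \emph{sum} $\bar f$, i.e.\ as $\langle\nabla\bar f(u)-\nabla\bar f(v),u-v\rangle\geq\bar\mu\|u-v\|^2$ at a \emph{common} pair of points, whereas each $\nabla f_i$ here is evaluated at its own point $x_i$; the device of rerouting all the $f_i$ through the common average $\hat x$ (and the consensual reference $\bar x$) and charging the discrepancy to the $L_i$'s is what makes this work, and it is this that (a) forces the reference point to be consensual and (b) makes the admissible $\alpha$ scale like $\sum_iL_i^2/(\lambda_2\bar\mu)$. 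When the $f_i$ are quadratic this step is immediate, since $\nabla^2 f_i$ is constant and the $\cC\oplus\cC^{\perp}$ decomposition can be applied directly to the constant Hessian $\diag([\nabla^2 f_i]_{i\in\cN})+\alpha(W\otimes\id_n)$, yielding the same $\mu_\alpha$ as a genuine global strong-convexity modulus.
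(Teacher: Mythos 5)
Your proposal is correct and follows essentially the same route as the paper's proof: the same orthogonal splitting of $\bx-\bar\bx$ into its consensus component and its complement, the same four-term decomposition through the common average $\hat x$ (strong convexity of $\bar f$ on the consensus direction, monotonicity of each $\nabla f_i$, and Cauchy--Schwarz with $\sqrt{\sum_i L_i^2}=\sqrt{N}\bar L$ for the two cross terms), and the same observation that $\lambda_{\min}^+(W)$ controls the orthogonal component. The only difference is cosmetic: the paper finishes with an $\omega$-parametrized case analysis and then maximizes over $\omega$, whereas you read off $\lambda_{\min}$ of the $2\times 2$ Gram matrix directly; note also that your own cross-term estimate actually yields coefficient $2\bar L$ (as in the paper's \eqref{grad-f-bound-s}), not $4\bar L$, so the eigenvalue route in fact gives a modulus at least as large as the stated $\mu_\alpha$ -- relaxing to $-4\bar L\,\|\mathbf z_\parallel\|\|\mathbf z_\perp\|$, as you do, recovers exactly $\mu_\alpha$ and is harmless since it only weakens a lower bound.
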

\begin{remark}
\label{rem:g}
When $\ubar{\mu}>0$, i.e., all $f_i$'s are strongly convex,
the parameter $\alpha$ can be set to zero; hence, $g(\bx)=f(\bx)$ is strongly convex
with modulus $\mu_g=\ubar{\mu}$. Otherwise, when $\ubar{\mu}=0$, $\alpha$ should be chosen according to Lemma \ref{lem:restricted-convex-static}; hence, $g(\bx)=f_\alpha(\bx)$ is strongly convex
with modulus $\mu_g=\mu_\alpha$. The condition $\alpha>\frac{4}{ \bar{\mu}\lambda^+_{\min}(W)}\sum_{i\in\cN}L_i^2$ is similar to the one in~\cite{shi2015extra}, where they also have a parameter ${W}\in\sy^{|\cN|}_+$ for their algorithm and $\alpha$ should be greater than $\frac{|\cN| L_{\max}^2}{2\bar{\mu}\lambda_{\min}^+({W})}$ and $L_{\max}=\max_{i\in\cN}L_i$.
\end{remark}
Next, 
we quantify the suboptimality and infeasibility of the DPDA iterate sequence. 
\begin{theorem}\label{thm:static-error-bounds}
{Suppose Assumption~\ref{assump:saddle-point} holds.}
Let {$\{\bx^k,\btheta^k\}_{k\geq 0}$} be the 
sequence generated by Algorithm DPDA, displayed in Fig.~\ref{alg:PDS}, initialized from an arbitrary $\bx^0$ and $\btheta^0=\mathbf{0}$. Then $\{\bx^k\}_{k\geq 0}$ converges to 
$\bx^*=\ones\otimes x^*$ such that $x^*$ is the optimal solution to \eqref{eq:central_problem};
moreover, the following error bounds hold for all $K\geq 1$:
{\small
\begin{align*}
\max\big\{ \abs{\Phi(\bar{\bx}^K)-\varphi({\bf x}^*)},\ \norm{M\bar{\bf x}^K}+\sum_{i\in\cN}\norm{\theta_i^*} d_{\mathcal{K}_i}(A_i\bar{\bf x}_i^K-b_i) \big\}\leq \Theta_0/ N_K,\quad 
{\norm{\bx^K-\bx^*}^2\leq \frac{\tilde{\tau}^K}{\gamma^K}~2\gamma^0\Theta_0,}
\end{align*}
}%
where $\Theta_0\triangleq {1\over 2\gamma^0}+\sum_{i\in\mathcal{N}}\Big[{1\over 2\tau^0}\|x_i^0-{x^*}\|^2+{2\over \kappa_i^0}\|\theta^*_i\|^2\Big]$, $\bar{\bx}^{K}=N_K^{-1}\sum_{k=1}^{K}{\gamma^{k-1}}\bx^k$, and $N_K=\sum_{k=1}^{K}{\gamma^{k-1}}=\cO(K^2)$. {Moreover, ${\tilde{\tau}^K}/{\gamma^K}=\cO(1/K^2)$.}
\end{theorem}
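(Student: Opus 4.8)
The plan is to recognize Algorithm DPDA as an instance of the Chambolle--Pock PDA in~\eqref{eq:PDA} applied to the saddle-point problem~\eqref{eq:static-saddle}, and then to invoke the ergodic rate bound~\eqref{eq:DPA-rate} with the right identification of $\Phi$, $h$, $T$, and step-size sequences. First I would fix $\alpha$ according to Remark~\ref{rem:g} (so that $g=f_\alpha$ is strongly convex with modulus $\mu_g\in\{\ubar{\mu},\mu_\alpha\}$, using Lemma~\ref{lem:restricted-convex-static} when $\ubar{\mu}=0$), rewrite the quadratic penalty $\tfrac{\alpha}{2}\norm{\bx}_{\Omega\otimes\id_n}^2$ in~\eqref{eq:static-saddle} via the splitting $\Omega = 2\,\mathrm{diag}(d_i) + (W-\Omega$-type correction$)$ so that the ``restricted strong convexity'' regularizer $\alpha r(\bx)$ gets absorbed into $g$ while the remainder stays linear in $\bx$ at the current iterate; then show that the node-wise updates in Fig.~\ref{alg:PDS} are exactly~\eqref{eq:pock-pd} (using $\prox{\kappa\sigma_{\cK_i}}=\cP_{\cK_i^\circ}$ and the $\blambda^k=M\bs^k$ substitution already derived in the text). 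With $D_k(\by,\bar\by)=\tfrac12\norm{\by-\bar\by}_{\bD_{\kappa^k,\gamma^k}}^2$, I would verify that the step-size rules in Steps 4--5 — $\tau^0=\min_i(L_i+\delta_2+2d_i\alpha)^{-1}$, $\gamma^0=\min_i \delta_2/(2d_i+\delta_1)$, $\kappa_i^0=\gamma^0\delta_1/\norm{A_i}^2$, and the updates $\eta^{k+1}=1/\sqrt{1+\mu\ttau^k}$, $\gamma^{k+1}=\gamma^k/\eta^{k+1}$ — imply the three conditions required by~\eqref{eq:DPA-rate}: $\tfrac1{\tau^k}+\mu\geq\tfrac1{\tau^{k+1}\eta^{k+1}}$ (this is how $\ttau^k$ is engineered), $(\tfrac1{\tau^k}-L)\geq\norm{T}^2\kappa^k$ with $L=L_{\max}+2\alpha\max_i d_i$ the Lipschitz constant of $\grad g$ and $\norm{T}^2$ bounded by $\max_i\norm{A_i}^2+\norm{M}^2$ distributed across the block-diagonal metric $\bD_{\kappa^k,\gamma^k}$, and $\kappa^k=\kappa^{k+1}\eta^{k+1}$. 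The choice $\mu\leq\max\{\ubar\mu,\mu_\alpha\}$ is what licenses using the strong-convexity modulus in the recursion.

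Next I would plug a specific primal-dual optimal point $(\bx^*,\by^*)=(\ones\otimes x^*,\ [\,{\btheta^*}^\top\ {\blambda^*}^\top]^\top)$ into~\eqref{eq:DPA-rate}. The left side gives $N_K(\cL(\bar\bx^K,\by^*)-\cL(\bx^*,\bar\by^K))+\tfrac{\kappa^K}{\tau^K}\tfrac1{2\kappa^0}\norm{\bx^*-\bar\bx^K}^2$, and the right side is $\tfrac1{2\tau^0}\norm{\bx^*-\bx^0}^2+D_0(\by^*,\by^0)=\Theta_0$ after expanding $D_0$ with $\btheta^0=\zero$, $\blambda^0=\zero$ (this is exactly how the constants $\tfrac1{2\gamma^0}$, $\tfrac1{2\tau^0}\norm{x_i^0-x^*}^2$, $\tfrac{2}{\kappa_i^0}\norm{\theta_i^*}^2$ arise — the factor $2$ coming from an inequality of the form $\norm{\by^*}^2\leq 2\norm{\by^*-\by^0}^2$ or a $\tfrac12$-vs-$\tfrac14$ adjustment in the quadratic lower bound on $D_k$). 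The saddle-point gap term is bounded below in the standard way: $\cL(\bar\bx^K,\by^*)-\cL(\bx^*,\bar\by^K)\geq (\Phi(\bar\bx^K)-\Phi(\bx^*)) + \fprod{T\bar\bx^K,\by^*} - (h(\by^*)+\text{something})$, and since at optimality $T\bx^*\in\partial(\text{cone/consensus})$, one converts this into $\abs{\Phi(\bar\bx^K)-\varphi(\bx^*)}$ plus the feasibility residual $\norm{M\bar\bx^K}+\sum_i\norm{\theta_i^*}\,d_{\cK_i}(A_i\bar\bx_i^K-b_i)$ — this is the routine Lagrangian-duality argument (choose $\by$ in~\eqref{eq:DPA-rate} to be $\by^*$ shifted in the direction of the violated constraint, normalized) that produces the max on the left of the claimed bound, $\leq\Theta_0/N_K$. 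The iterate bound $\norm{\bx^K-\bx^*}^2\leq\tfrac{\ttau^K}{\gamma^K}2\gamma^0\Theta_0$ then follows by keeping the $\tfrac{\kappa^K}{\tau^K}\tfrac1{2\kappa^0}\norm{\bx^*-\bar\bx^K}^2$ term — wait, more carefully it should track the last primal iterate $\bx^K$ via the per-step descent inequality rather than the ergodic average, so I would instead retain the telescoped $\tfrac1{2\tilde\tau^K}\norm{\bx^K-\bx^*}^2$-type term that Chambolle--Pock's strongly-convex analysis leaves on the left-hand side, and observe $\tfrac1{\tilde\tau^K}$ is comparable to $\tfrac{\gamma^K}{\gamma^0}\cdot\tfrac1{\text{const}}$.

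For the rate $N_K=\sum_{k=1}^K\gamma^{k-1}=\cO(K^2)$ and $\ttau^K/\gamma^K=\cO(1/K^2)$, I would reuse the growth analysis already cited from~\cite{chambolle2015ergodic}: the recursions $\eta^{k+1}=1/\sqrt{1+\mu\ttau^k}$, $\ttau^{k+1}=\eta^{k+1}\ttau^k$, $\gamma^{k+1}=\gamma^k/\eta^{k+1}$ are precisely their accelerated schedule, giving $\ttau^k=\Theta(1/k)$, $\gamma^k=\Theta(k)$, hence $N_K=\Theta(K^2)$ and $\ttau^K/\gamma^K=\Theta(1/K^2)$; I'd state this and give the one-line induction ($\ttau^k k\to 1/\mu$, standard). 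Finally, convergence of $\{\bx^k\}$ to $\bx^*$ (not just the ergodic average) follows because $\norm{\bx^K-\bx^*}^2\to 0$ at rate $\cO(1/K^2)$ from the second displayed bound, and uniqueness of $x^*$ is guaranteed by $\bar\mu>0$ (Remark after Assumption~\ref{assump:sconvex}). The main obstacle I anticipate is the bookkeeping in verifying the step-size feasibility conditions with the *block-diagonal* dual metric $\bD_{\kappa^k,\gamma^k}$ rather than a scalar $\kappa^k$: one must check $(\tfrac1{\tau^k}-L)\id \succeq T^\top \bD_{\kappa^k,\gamma^k}^{-1}T$, which decomposes into $\kappa_i^k\norm{A_i}^2 + \gamma^k\norm{M_i}^2$-type bounds per block and is exactly why the initialization splits $\delta_2$ between the $A_i$-part (via $\kappa_i^0=\gamma^0\delta_1/\norm{A_i}^2$, so $\kappa_i^0\norm{A_i}^2=\gamma^0\delta_1$) and the $M$-part (via $\gamma^0\leq\delta_2/(2d_i+\delta_1)$, using $\norm{M}^2\leq 2\max_i d_i$); getting these constants to line up with $\tau^0=\min_i(L_i+\delta_2+2d_i\alpha)^{-1}$ and confirming the ratio $\kappa_i^k/\kappa_i^{k+1}=\gamma^k/\gamma^{k+1}=\eta^{k+1}$ holds uniformly in $i$ (so the single scalar recursion governs everything) is the delicate part. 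A secondary subtlety is justifying the factor-of-$2$ discrepancies between the $D_k\geq\tfrac1{2\kappa^k}\norm{\cdot}^2$ lower bound used in~\eqref{eq:DPA-rate} and the exact $\tfrac12\norm{\cdot}^2_{\bD}$ form, and between $\norm{\theta_i^*}^2$ and $\norm{\theta_i^*-\theta_i^0}^2$, which I'd handle with Young's inequality / the cone structure $\theta_i^0=\zero\in\cK_i^\circ$.
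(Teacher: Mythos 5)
Your proposal is correct and follows essentially the same route as the paper: the accelerated Chambolle--Pock ergodic analysis carried out with the block-diagonal dual metric (the paper re-derives the per-iteration inequality in Lemma~\ref{lem:pda-lemma} and verifies the step-size conditions via Schur complements in Lemmas~\ref{lem:schur-s} and~\ref{lem:stepsize-s} --- exactly the ``delicate bookkeeping'' you flag), followed by the test-point construction $\tilde{\theta}_i\propto\cP_{\cK_i^\circ}(A_i\bar{x}_i^K-b_i)$, $\tilde{\blambda}=M\bar{\bx}^K/\|M\bar{\bx}^K\|$ and the saddle-point lower bound at $(\bx^*,\btheta^*,\mathbf{0})$, with the last-iterate distance bound read off from the retained $\tfrac{1}{2}\|\bx^K-\bx^*\|^2$ term as you (after self-correcting) indicate. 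The one slip is your account of the factor $2$ in $\tfrac{2}{\kappa_i^0}\|\theta_i^*\|^2$: it arises because the test point is taken with $\|\tilde{\theta}_i\|=2\|\theta_i^*\|$ (so the infeasibility term survives the subtraction of the lower bound with a positive coefficient), not from a Young-type inequality on $\|\by^*-\by^0\|^2$.
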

\begin{remark}
Note that the result in Theorem \ref{thm:static-error-bounds} can be extended to weighted graphs by replacing the Laplacian matrix $\Omega$ in $g(\bx)=f(\bx)+\tfrac{\alpha}{2}\norm{\bx}_{\Omega\otimes\id_n}^2$  with a weighted Laplacian $W$ as in Definition~\ref{def:W}, and also replacing consensus constraint $M\bx=0$ in \eqref{eq:dist_problem} with $(W\otimes\id_n)\bx=0$.
\end{remark}\vspace*{-6mm}
\section{A distributed method for a time-varying communication network}
\label{sec:dynamic}
In this section we develop a distributed primal-dual algorithm for solving \eqref{eq:central_problem} when the communication network topology is \emph{time-varying}. We will adopt the following definition and assumption for the \emph{time-varying} network model.

\begin{defn}
\label{def:neighbors}
Given $t\geq 0$, for an undirected graph $\cG^t=(\cN,\cE^t)$, let $\cN_i^t\triangleq\{j\in\cN:\ (i,j)\in\cE^t~\hbox{ or }~(j,i)\in\cE^t\}$ denote the set of neighboring nodes of $i \in \mathcal{N}$, and $d_i^t\triangleq |\mathcal{N}_i^t|$ represent the degree of node $i\in \mathcal{N}$ at time $t$; for a directed graph $\cG^t=(\cN,\cE^t)$, let $\cN^{\,t,{\rm in}}_i\triangleq\{j\in\cN:\ (j,i)\in\cE^t\}\cup\{i\}$ and $\cN^{\,t,{\rm out}}_i\triangleq\{j\in\cN:\ (i,j)\in\cE^t\}\cup\{i\}$ denote the in-neighbors and out-neighbors of node $i$ at time $t$, respectively; and $d_i^t\triangleq |\cN^{\,t,{\rm out}}_i|$ be the out-degree of node $i$.
\end{defn}
\begin{assumption}
\label{assump:communication_general}
Suppose that $\{\cG^t\}_{t\in\reals_+}$ is a collection of either all directed or all undirected graphs. When $\cG^t$ is an undirected graph, node $i\in\cN$ can send and receive data to and from $j\in\cN$ at time $t$ only if $j\in\cN_i^t$, i.e., $(i,j) \in \cE^t$ or $(j,i) \in \cE^t$; on the other hand, when $\cG^t$ is a directed graph, node $i\in\cN$ can receive data from $j\in\cN$ only if $j\in\cN_i^{\,t,{\rm in}}$, i.e., $(j,i)\in\cE^t$, and can send data to $j\in\cN$ only if $j\in\cN_i^{\,t,{\rm out}}$, i.e., $(i,j)\in\cE^t$.
\end{assumption}

We assume a \emph{compact} domain, i.e., let ${\Delta_i}\triangleq\max_{x_i,x'_i\in \dom \rho_i}\|x-x'\|$ and ${\Delta}\triangleq\max_{i\in\mathcal{N}}\Delta_i<\infty$.  Let $\cB_0\triangleq\{x\in\reals^n:\ \norm{x}\leq {2\Delta}\}$ and $\mathcal{B}\triangleq\Pi_{i\in\cN}\cB_0$; 
and let $\cC$ and $\Ct$ be the sets of consensus and bounded consensus decisions respectively:
\newpage
{
\vspace*{-1.5cm}
\begin{align}
\label{eq:consensus_set}
\cC \triangleq\{{\bf x}\in \mathbb{R}^{n|\mathcal{N}|}:\ \exists\bar{x}\in\mathbb{R}^n\ \st\ x_i=\bar{x}\quad \forall\ i\in\mathcal{N} \}, \qquad \Ct \triangleq \cC\cap\cB.
\end{align}}%
Since $x^*$ is the unique solution to \eqref{eq:central_problem} and since $\bx^*\triangleq\ones\otimes x^*$ satisfies $\cP_\cC(\bx^*)=0$, one can reformulate \eqref{eq:central_problem} as a saddle point problem using $\Ct$. Indeed, Indeed, let $\bx=[x_i]_{i\in\cN}\in\reals^{n|\cN|}$, $\by=[\btheta^\top \blambda^\top]^\top$ such that $\btheta=[\theta_i]_{i\in\cN}$ and $\blambda\in\reals^{n|\cN|}$, then for any $\alpha\geq 0$, one can compute a primal-dual optimal solution to \eqref{eq:central_problem} through solving
{\small
\begin{align}\label{eq:dynamic-saddle}
\min_{\bx} \max_{\by} \cL(\bx,\by)\triangleq \frac{\alpha}{2}d^2_\cC(\bx)+\sum_{i\in\mathcal{N}}\Big(\varphi_i(x_i)+\langle \theta_i,A_ix_i-b_i\rangle-\sigma_{\mathcal{K}_i}(\theta_i)\Big)+\fprod{\blambda,~\bx}-\sigma_{\Ct}(\blambda).
\end{align}}%
Next, we consider a slightly different implementation of PDA in \eqref{eq:PDA} 
to solve \eqref{eq:dynamic-saddle}. 
\begin{defn}
\label{def:bregman}
Let $\cX\triangleq\Pi_{i\in\cN}\reals^n$ and $\cX\ni\bx=[x_i]_{i\in\cN}$; $\cY\triangleq\Pi_{i\in\cN}\reals^{m_i}\times\reals^{m_0}$,  $\cY\ni\by=[\btheta^\top \blambda^\top]^\top$ and $\btheta=[\theta_i]_{i\in\cN}\in\reals^m$, where $m\triangleq\sum_{i\in\cN}{m_i}$ and $m_0\triangleq n|\cN|$. Given parameters $\gamma^k>0$, $\kappa_i^k>0$ for $i\in\cN$, let $\mathbf{D}_{\gamma^k}\triangleq\frac{1}{\gamma^k}\id_{m_0}$, $\mathbf{D}_{\kappa^k}\triangleq\diag([\frac{1}{\kappa_i^k}\id_{m_i}]_{i\in\cN})$, and $\mathbf{D}_{\kappa^k,\gamma^k}\triangleq \begin{bmatrix} \bD_{\kappa^k}& \zero \\ \zero & \bD_{\gamma^k} \end{bmatrix}$.
\end{defn}
\begin{defn}
\label{def:problem-components}
Let $\Phi,{\varphi}:\cX\rightarrow\reals\cup\{\infty\}$ such that $\Phi(\bx)=\rho(\bx)+g(\bx)$ and $\varphi(\bx)=\rho(\bx)+f(\bx)$ where $\rho(\bx)\triangleq\sum_{i\in\cN}\rho_i(x_i)$, $g(\bx)\triangleq f(\bx)+\frac{\alpha}{2}d^2_\cC(\bx)$ and $f(\bx)\triangleq\sum_{i\in\cN}f_i(x_i)$, and let $h:\cY\rightarrow\reals\cup\{\infty\}$ such that $h(\by)\triangleq\sigma_{\cC}(\blambda)+\sum_{i\in\cN}\sigma_{\mathcal{K}_i}(\theta_i)+\fprod{b_i,\theta_i}$. Define the block-diagonal matrix $A\triangleq\diag([A_i]_{i\in\mathcal{N}})\in\reals^{m \times n|\cN|}$ and $T=[A^\top~\id_{n|\cN|}]^\top$.
\end{defn}
Given some positive parameters $\gamma^k,\tau^k>0$, $\kappa_i^k>0$ for $i\in\cN$ -- we shortly discuss how to select them, we define the Bregman function $D_k(\by,\bar{\by})=\frac{1}{2}\norm{\by-\bar{\by}}_{\mathbf{D}_{\kappa^k,\gamma^k}}^2$ for each $k\geq 0$. Hence, given $\Phi$, $h$ and $T$ as in Definition~\ref{def:problem-components}, and the initial iterates $\bx^0$ and $\by^0=[{\btheta^0}^\top {\blambda^0}^\top]^\top$,
the PDA iterations given in \eqref{eq:PDA} 
take the following form for $k\geq 0$:
{\small
\begin{subequations}\label{eq:pock-pd-2}
\begin{align}
&\theta_i^{k+1}\gets\argmin_{\theta_i}\sigma_{\mathcal{K}_i}(\theta_i) -\langle A_i(\xi_i^k+\eta^k(\xi_i^{k}-\xi_i^{k-1}))-b_i,~\theta_i\rangle +{1\over 2\kappa_i^k}\|\theta_i-\theta_i^k\|_2^2,\quad i\in\cN \label{eq:pock-pd-2-theta}\\
&\blambda^{k+1}\gets\argmin_{\bmu} \sigma_{\Ct}(\bmu)-\langle {\bxi}^{k}+\eta^k(\bxi^k-{\bxi}^{k-1}),~\bmu \rangle +{1\over 2\gamma^k}\|\bmu-\bmu^k\|_2^2, \label{eq:pock-pd-2-lambda} \\
& \bmu^{k+1}\gets\blambda^{k+1}, \label{eq:pock-pd-2-mu}\\
&\bx^{k+1}\gets\argmin_{{\bxi}} \rho(\bxi)+\langle \nabla {g(\bxi^k)},~\bxi \rangle +\langle A\bxi-b,~\btheta^{k+1}\rangle+\langle \bxi,~\bmu^{k+1} \rangle +{1\over 2\tau^k}\|\bxi-\bxi^k\|^2, \label{eq:pock-pd-2-x} \\
&\bxi^{k+1}\gets \bx^{k+1}, \label{eq:pock-pd-2-xi}
\end{align}
\end{subequations}
}%
where $\bxi^{-1}=\bxi^0=\bx^0$ and $\bmu^0=\blambda^0$. For $k\geq 0$, using extended Moreau decomposition for proximal operators, $\blambda^{k+1}$ in \eqref{eq:pock-pd-2-lambda} can be computed as
{\small
\begin{align}\label{eq:extended-Moreau}
 \blambda^{k+1}&
 =\prox{\gamma^k\sigma_{\Ct}}(\bmu^k+\gamma^k({\bxi}^{k}+\eta^k(\bxi^k-{\bxi}^{k-1}))=\gamma^k\left(\bom^k-\mathcal{P}_{\Ct}(\bom^k)\right), 
\end{align}
}%
where $\bom^k\triangleq\frac{1}{\gamma^k}\bmu^k+{\bxi}^{k}+\eta^k(\bxi^k-{\bxi}^{k-1})$ for {$k\geq 0$}. Moreover, $\nabla g$ for the $\bx$-step in \eqref{eq:pock-pd-2-x} can be computed as
{\small
\begin{equation}\label{grad-g}
\nabla g(\bxi^k)=\nabla f(\bxi^k)+{\alpha} \cP_{\cC^{\circ}}(\bxi^k)=\nabla f(\bxi^k)+{\alpha}\big(\bxi^k-\cP_{\cC}(\bxi^k)\big).
\end{equation}
}%
For any $\bx=[x_i]_{i\in\cN}\in\cX$, $\mathcal{P}_{\Ct}(\bx)$ and $\cP_\cC(\bx)$ can be computed as
{
\begin{equation}\label{eq:average}
\mathcal{P}_{\Ct}(\bx)=\one\otimes \argmin_{\xi\in\cB_0}\|\xi-{1\over |\mathcal{N}|}\sum_{i\in\mathcal{N}}x_i\|^2=\mathcal{P}_{\mathcal{B}}(\cP_\cC(\bx)),\quad \hbox{and}\quad \cP_\cC(\bx)=\one\otimes p(\bx),
\end{equation}
}%
where $p(\bx)\triangleq{1\over |\mathcal{N}|}\sum_{i\in\mathcal{N}}x_i$, $\cP_{\cB}(\bx)=[\cP_{\cB_0}(x_i)]_{i\in\cN}$ and $\cP_{\cB_0}(x_i)=x_i\min\{1,\frac{2\Delta}{\norm{x_i}}\}$ for $i\in\cN$. Equivalently, $\cP_\cC(\bx)=(W\otimes\id_n)\bx$
for $W\triangleq\frac{1}{|\cN|}\one\one^\top\in\reals^{|\cN|\times|\cN|}$.

Although $\btheta$-step of the PDA implementation in \eqref{eq:pock-pd-2} can be computed locally at each node, computing $\bx$-step and $\blambda$-step require communication among the nodes to evaluate $\cP_{\cC}(\bom^k)$ and $\cP_{\cC}(\bxi^k)$. Indeed, evaluating the average operator $p(.)$ is \emph{not} a simple operation in a decentralized computational setting which only allows for communication among the neighbors. In order to overcome this issue, we will approximate the average operator $p(.)$ using multi-communication rounds, and analyze the resulting iterations as an \emph{inexact} primal-dual algorithm.
We define a \emph{communication round} at time $t$ as an operation over $\cG^t$ such that every node simultaneously sends and receives data to and from its neighboring nodes according to Assumption~\ref{assump:communication_general} -- the details of this operation will be discussed shortly. 
We assume that communication among neighbors occurs \emph{instantaneously}, and nodes operate \emph{synchronously}; and we further assume that for each PDA iteration $k\geq 0$, there exists an approximate averaging operator $\cR^k(\cdot)$ 
which can be computed in a decentralized fashion and approximate $\mathcal{P}_{\cC}(\cdot)$ 
with decreasing approximation error as $k$, {the number of PDA iterations, increases. This \emph{inexact} version of PDA using approximate averaging operator $\cR^k(\cdot)$ and running on time-varying communication network $\{\cG^t\}$ will be called DPDA-TV, of which details will be explained next.}
\begin{assumption}
\label{assump:approximate-average}
Given a time-varying network $\{\cG^t\}_{t\in\reals_+}$ such that $\cG^t=(\cN,\cE^t)$ for $t\geq 0$, suppose that there is a global clock known to all $i\in\cN$. Assume that the local operations requiring to compute $\Pi_{\cK_i}$ as in \eqref{eq:pock-pd-2-theta}, and $\prox{\rho_i}$ and $\grad f_i$ as in \eqref{eq:pock-pd-2-xi} can be completed between two ticks of the clock for all $i\in\cN$ and $k\geq 0$; and  every time the clock ticks a communication round with instantaneous messaging between neighboring nodes takes place subject to Assumption~\ref{assump:communication_general}. Suppose that for each $k\geq 0$ there exists $\cR^k(\cdot)=[\cR_i^k(\cdot)]_{i\in\cN}$ such that $\cR_i^k(\cdot)$ can be computed with local information available to node $i\in\cN$, and decentralized computation of $\cR^k$ requires $q_k$ communication rounds. Furthermore, we assume that there exist $\Gamma>0$ and $\alpha\in (0,1)$ such that for all $k\geq 0$, $\cR^k$ satisfies
{
\begin{align}
\label{eq:approx_error-for-full-vector-x}
{ \|\cR^k(\bw)-\mathcal{P}_{\cC}(\bw)\| \leq N~\Gamma \beta^{q_k}\norm{\bw}},\quad\forall~\bw\in{\reals^{m_0}}.
\end{align}}%
\end{assumption}
{Now we briefly talk about such operators. Let $V^t\in\reals^{|\cN|\times|\cN|}$ be a matrix encoding the topology of $\cG^t=(\cN,\cE^t)$ in some way for $t\in\integers_+$. We define $W^{t,s}\triangleq V^tV^{t-1}...V^{s+1}$ for any $t,s\in\integers_+$ such that $t\geq s+1$.  For directed time-varying graph ${\cG^t}$, set $V^t\in\reals^{|\cN|\times |\cN|}$ as:}
\begin{align}
\label{eq:directed-weights}
V^t_{ij}={1\over d_j^t}\ \hbox{ if }\ j\in\cN^{\,t,{\rm in}}_i;\quad V^t_{ij}=0\ \hbox{ if }\ j\not\in\cN^{\,t,{\rm in}}_i,\quad i\in\cN.
\end{align}
Let $t_k\in\integers_{+}$ be the total number of \emph{communication rounds} done before the $k$-th iteration of {DPDA-TV}, and let $q_k\in\integers_{+}$ be the number of communication rounds to be performed within the $k$-th iteration while evaluating $\cR^k$. For $\bx=[x_i]_{i\in\cN}\in\cX$ such that $x_i\in\reals^n$ for $i\in\cN$, define
{
\begin{equation}
\label{eq:approx-average-dual-directed}
\cR^k(\bx)\triangleq{\diag(W^{t_k+q_k,t_k}\ones_{|\cN|})^{-1}}(W^{t_k+q_k,t_k}\otimes\id_n)~\bx
\end{equation}
}%
to approximate $\mathcal{P}_{\cC}(\cdot)$. Note that $\mathcal{R}^k(\cdot)$ can be computed in a \emph{distributed fashion} requiring $q_k$ communication rounds -- $\cR^k$ is nothing but the push-sum protocol~\cite{kempe2003gossip}. Assuming that the digraph sequence $\{\cG^t\}_{t\in\integers_+}$ is uniformly strongly connected (M-strongly connected), it follows from \cite{kempe2003gossip,nedic2015distributed} that $\cR^k$ satisfies Assumption~\ref{assump:approximate-average}. When $\{\cG^t\}_{t\in\integers_+}$ is undirected time-varying network, then choosing $\{V^t\}$ according to Metropolis weights, one can show that
\begin{align}
\label{eq:approx-average-dual-undirected}
\cR^k(\bx)\triangleq (W^{t_k+q_k,t_k}\otimes\id_m)\bx
\end{align}
satisfies Assumption~\ref{assump:approximate-average} under certain conditions, {e.g., see~\cite{nedic2009distributed}.}

Note that for $\tcR^k(\cdot)\triangleq\cP_{\cB}(\cR^k(\cdot))$, we have $\tcR^k(\bw)\in\cB$, and $\|\tcR^k(\bw)-\mathcal{P}_{\Ct}(\bw)\|\leq N~\Gamma \beta^{q_k}\norm{\bw}$ for $\bw\in\reals^{{m_0}}$ due to non-expansivity of $\cP_\cB$.
Consider the $k$-th iteration of PDA as shown in \eqref{eq:pock-pd-2}. Instead of setting $\bmu^{k+1}$ to $\blambda^{k+1}$ and $\bxi^{k+1}$ to $\bx^{k+1}$, which require computing $\cP_\cC$, we propose replacing these assignment operations in \eqref{eq:pock-pd-2-mu} and \eqref{eq:pock-pd-2-xi} with an operation that {uses} the inexact averaging operator $\cR^k$ to approximate $\cP_\cC$. This way, 
we obtain \emph{inexact} variant of \eqref{eq:pock-pd-2} replacing \eqref{eq:pock-pd-2-mu} and \eqref{eq:pock-pd-2-xi} with
{\small
\begin{subequations}\label{eq:inexact-rule}
\begin{align}
\bmu^{k+1} &\gets \gamma^k\left(\bom^k-\cP_\cB\big(\cR^k(\bom^k)\big)\right),\quad \hbox{where}\quad \bom^k=\frac{1}{\gamma^k}\bmu^k+{\bxi}^{k}+\eta^k(\bxi^k-{\bxi}^{k-1}), \label{eq:inexact-rule-mu}\\
\bxi^{k+1}&\gets \prox{\tau^k\rho}\Big(\bxi^k-\tau^k\Big(\nabla f(\bxi^k)+A^\top{\btheta^{k+1}+\bmu^{k+1}}+\alpha\big(\bxi^k-\cR^k(\bxi^k)\big)\Big)\Big). \label{eq:inexact-rule-xi}
\end{align}
\end{subequations}}%
Thus, PDA iterations given in~\eqref{eq:pock-pd-2} can be computed inexactly, but in \emph{decentralized} way for a {time-varying} connectivity network $\{\cG^t\}_{t\geq 0}$, via the node-specific computations as in time-varying distributed primal dual algorithm displayed in Fig.~\ref{alg:PDD} below. Indeed, the iterate sequence $\{\bxi^k,\bmu^k,\btheta^k\}_{k\geq 0}$ generated by~DPDA-TV displayed in Fig.~\ref{alg:PDD} is the same sequence generated by the recursion in \eqref{eq:pock-pd-2-theta}, \eqref{eq:inexact-rule-mu}, and \eqref{eq:inexact-rule-xi}. 
The sequences $\{\bx^k\}_{k\geq 0}$ and $\{\blambda^k\}_{k\geq 0}$ will not be explicitly computed, instead we will use it in the analysis of the inexact algorithm.
\begin{figure}[htpb]
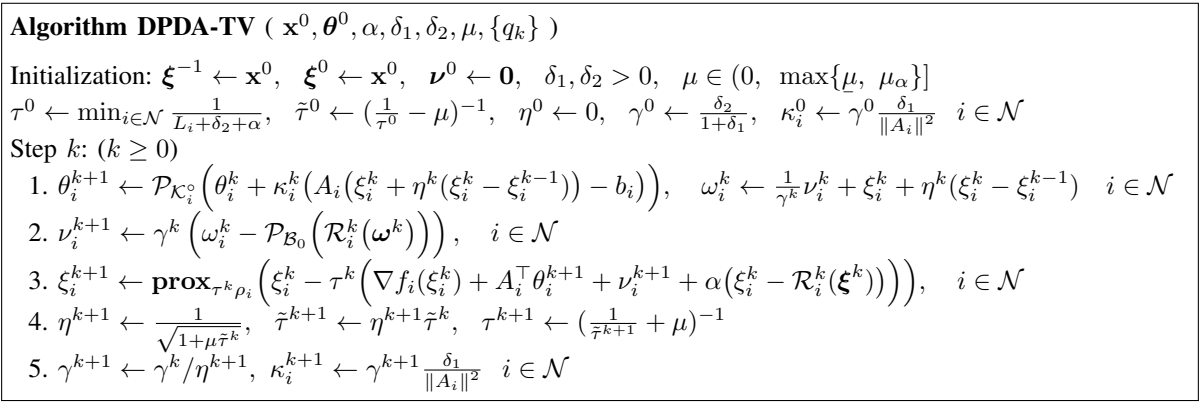

\centering
\framebox{\parbox{0.95\columnwidth}{
{\small
\textbf{Algorithm DPDA-TV} ( $\bx^{0},\btheta^0,\alpha,\delta_1,\delta_2,\mu,\{q_k\}$ ) \\[1.5mm]
Initialization: $\bxi^{-1}\gets\bx^0,\ \ \bxi^{0}\gets\bx^0,\ \ \bmu^0\gets\mathbf{0},\ \ \delta_1,\delta_2>0,\ \ \mu\in(0,~\max\{\ubar{\mu},~\mu_\alpha\}]$\\
$\tau^0\gets\min_{i\in\cN}\frac{1}{L_i+\delta_2+\alpha},\ \ \tilde{\tau}^0\gets(\frac{1}{\tau^0}-\mu)^{-1},\ \ \eta^0\gets 0,\ \ \gamma^0\gets\frac{\delta_2}{1+\delta_1},\ \ \kappa_i^0\gets\gamma^0\frac{\delta_1}{\norm{A_i}^2}\ \ i\in\cN$\\
Step $k$: ($k \geq 0$)\\
\text{ } 1. $\theta_i^{k+1}\gets\cP_{\cK_i^\circ}\Big(\theta_i^k+\kappa_i^k\big(A_i\big(\xi_i^{k}+\eta^k(\xi_i^k-\xi_i^{k-1})\big)-b_i\big)\Big),  \quad \omega^k_i\gets\tfrac{1}{\gamma^k}{\nu}_i^k+\xi_i^{k}+\eta^k(\xi_i^k-\xi_i^{k-1}) \quad i\in\cN$\\
\text{ } 2. ${\nu}_i^{k+1}\gets\gamma^k \left(\omega_i^k - \cP_{\cB_0}\Big(\cR_i^k\big(\bom^k\big)\Big)\right),\quad i \in \cN$ \\
\text{ } 3. $\xi_i^{k+1}\gets\prox{\tau^k\rho_i}\Big(\xi_i^k-\tau^k\Big(\nabla f_i(\xi_i^k)+A_i^\top{\theta^{k+1}_i+\nu_i^{k+1}}+{\alpha}\big(\xi_i^k-\cR_i^k(\bxi^k)\big)\Big)\Big),\quad i \in \cN$\\
\text{ } 4. $\eta^{k+1}\gets\frac{1}{\sqrt{1+\mu {\ttau}^k}},\ \ {\ttau}^{k+1}\gets \eta^{k+1} {\ttau}^k,\ \ \tau^{k+1}\gets(\frac{1}{\ttau^{k+1}}+\mu)^{-1}$\\
\text{ } 5. $\gamma^{k+1}\gets\gamma^k/\eta^{k+1},\ \kappa_i^{k+1}\gets\gamma^{k+1}\frac{\delta_1}{\norm{A_i}^2}\ \ i\in\cN$
}}}\\
\caption{\small time-varying Distributed Primal Dual Algorithm (DPDA-TV)}
\label{alg:PDD}
\vspace*{-5mm}
\end{figure}

{Recall Remark~\ref{rem:rs-convexity}, it is possible that $\ubar{\mu}=0$. In the next lemma, similar to Lemma~\ref{lem:restricted-convex-static},} we generalize the result in Proposition 3.6. of \cite{shi2015extra}, making it suitable for time-varying topology, and show that by suitably regularizing $f$, one can obtain a strongly convex function when $\ubar{\mu}=0$.
\begin{lemma}\label{lem:restricted-convex-general}
Consider $f(\bx)=\sum_{i\in\cN}f_i(x_i)$ under Assumption~\ref{assump:sconvex} and suppose $\ubar{\mu}=0$. Given $\alpha>0$, let $f_\alpha(\bx)\triangleq f(\bx)+\alpha~r(\bx)$, where 
$r(\bx)\triangleq \tfrac{1}{2}d_{\cC}^2(\bx)$.
Then $f_\alpha$ is strongly convex
with modulus ${\mu}_\alpha \triangleq \frac{\bar{\mu}/|\cN|~+\alpha}{2}-\sqrt{\left(\frac{\bar{\mu}/|\cN|~-\alpha}{2}\right)^2+4\bar{L}^2}>0$ for any $\alpha>\frac{4}{ \bar{\mu}}\sum_{i\in\cN}L_i^2$, where $\bar{L}=\sqrt{\frac{\sum_{i\in\cN}L_i^2}{|\cN|}}$.
\end{lemma}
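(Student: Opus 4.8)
The plan is to adapt the proof of Lemma~\ref{lem:restricted-convex-static}; the only change is the choice of the regularizing Laplacian. Set $W\triangleq\id_{|\cN|}-\tfrac{1}{|\cN|}\one\one^\top$, which is exactly the weighted graph Laplacian produced by Definition~\ref{def:W} for the complete graph on $\cN$ with uniform edge weights $1/|\cN|$; it is an orthogonal projector, so $W\in\sy^{|\cN|}_+$ with kernel $\spn\{\one\}$ and $\lambda_{\min}^+(W)=1$. Since $\cP_{\cC}(\bx)=\big(\tfrac{1}{|\cN|}\one\one^\top\otimes\id_n\big)\bx$, the matrix $W\otimes\id_n$ is the orthogonal projector onto $\cC^\perp$, whence
\[
r(\bx)=\tfrac{1}{2}d_{\cC}^2(\bx)=\tfrac{1}{2}\norm{\bx-\cP_{\cC}(\bx)}^2=\tfrac{1}{2}\norm{\bx}_{W\otimes\id_n}^2 .
\]
Thus the claim is exactly the instance $\lambda_2:=\lambda_{\min}^+(W)=1$ of Lemma~\ref{lem:restricted-convex-static}: substituting $\lambda_2=1$ into its modulus and into its threshold $\alpha>\tfrac{4}{\lambda_2\bar{\mu}}\sum_{i\in\cN}L_i^2$ returns precisely $\mu_\alpha$ and $\alpha>\tfrac{4}{\bar{\mu}}\sum_{i\in\cN}L_i^2$.

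For a self-contained argument I would repeat that proof with this $W$. As each $f_i\in C^1$ and $r$ is quadratic, $f_\alpha\in C^1$, so $f_\alpha$ is strongly convex with modulus $\mu_\alpha$ iff $\fprod{\nabla f_\alpha(\bx)-\nabla f_\alpha(\bar{\bx}),\ \bx-\bar{\bx}}\ge\mu_\alpha\norm{\bx-\bar{\bx}}^2$ for all $\bx,\bar{\bx}$; using $\nabla r(\bx)=\bx-\cP_{\cC}(\bx)$ and linearity of $\cP_{\cC}$, and writing $z_i\triangleq x_i-\bar{x}_i$, the left-hand side equals $\sum_{i\in\cN}\fprod{\nabla f_i(x_i)-\nabla f_i(\bar{x}_i),\ z_i}+\alpha\,d_{\cC}^2(\bx-\bar{\bx})$. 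I would then decompose $\bx-\bar{\bx}$ orthogonally into $\cP_{\cC}(\bx-\bar{\bx})=\one\otimes v$, with $v\triangleq p(\bx)-p(\bar{\bx})$, and its complement, and set $V\triangleq\norm{\one\otimes v}$ (so $V^2=|\cN|\norm{v}^2$) and $Z\triangleq d_{\cC}(\bx-\bar{\bx})$, so that $V^2+Z^2=\norm{\bx-\bar{\bx}}^2$. Resolving each $\nabla f_i(x_i)-\nabla f_i(\bar{x}_i)$ against the network-average points $p(\bx),p(\bar{\bx})$: the common-point part yields $\fprod{\nabla\bar{f}(p(\bx))-\nabla\bar{f}(p(\bar{\bx})),\ v}\ge\bar{\mu}\norm{v}^2=\tfrac{\bar{\mu}}{|\cN|}V^2$ by Assumption~\ref{assump:sconvex}; the diagonal residuals $\fprod{\nabla f_i(x_i)-\nabla f_i(p(\bx)),\ x_i-p(\bx)}$ are nonnegative by monotonicity of $\nabla f_i$ and are dropped; and the remaining coupling residuals are bounded below, via $L_i$-Lipschitzness of $\nabla f_i$ and Cauchy--Schwarz over $i\in\cN$ (using $\sum_{i\in\cN}L_i^2=|\cN|\bar{L}^2$), by $-4\bar{L}\,VZ$. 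Hence the left-hand side is at least $\tfrac{\bar{\mu}}{|\cN|}V^2-4\bar{L}\,VZ+\alpha Z^2\ \ge\ \mu_\alpha(V^2+Z^2)=\mu_\alpha\norm{\bx-\bar{\bx}}^2$, the last step because $\mu_\alpha$ is exactly the smaller eigenvalue of the symmetric $2\times2$ matrix with diagonal $(\bar{\mu}/|\cN|,\ \alpha)$ and off-diagonal $-2\bar{L}$; finally $\mu_\alpha>0$ is equivalent to positive definiteness of this matrix, i.e.\ to positive trace (automatic) and positive determinant $\tfrac{\bar{\mu}}{|\cN|}\alpha>4\bar{L}^2$, i.e.\ $\alpha>\tfrac{4}{\bar{\mu}}\sum_{i\in\cN}L_i^2$.

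The hard part will be the coupling-residual estimate: one must route the per-agent gradient differences through a \emph{single} common point so that strong convexity of $\bar{f}=\sum_{i\in\cN}f_i$ can be invoked---strong convexity of $\bar{f}$ only constrains the consensus direction, which is why the orthogonal splitting together with the $\alpha$-regularizer on $\cC^\perp$ is needed---and then to carry the Lipschitz/Cauchy--Schwarz constants carefully enough that the resulting $2\times2$ quadratic form has smallest eigenvalue precisely $\mu_\alpha$ with positivity threshold precisely $\alpha>\tfrac{4}{\bar{\mu}}\sum_{i\in\cN}L_i^2$. The remaining ingredients---the identity $d_{\cC}^2(\bx)=\norm{\bx}_{W\otimes\id_n}^2$, the orthogonal decomposition, and the $2\times2$ eigenvalue computation---are routine.
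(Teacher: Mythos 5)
Your opening reduction is exactly right and is essentially what the paper does: the paper's own proof of this lemma decomposes $\bx=\cP_{\cC}(\bx)+\cP_{\cC^\circ}(\bx)$, reuses the four inequalities \eqref{eq:fprod_bound} from the static proof verbatim (the only properties of $W$ ever used there are $W\in\sy^{|\cN|}_+$ and $\ns(W)=\spn\{\one\}$, which your complete-graph choice $W=\id_{|\cN|}-\tfrac{1}{|\cN|}\one\one^\top$ satisfies with $\lambda_{\min}^+(W)=1$), and then runs the same $\omega$-splitting to extract $\mu_\alpha$. Had you stopped there, the proposal would match the paper.

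The "self-contained argument" you then sketch, however, contains a genuine gap, and it is precisely at the step you flag as the hard part. You attempt to prove the two-point inequality $\fprod{\nabla f_\alpha(\bx)-\nabla f_\alpha(\bar{\bx}),\bx-\bar{\bx}}\ge\mu_\alpha\norm{\bx-\bar{\bx}}^2$ for \emph{arbitrary} pairs, routing each $\nabla f_i(x_i)-\nabla f_i(\bar{x}_i)$ through the averages $p(\bx),p(\bar{\bx})$ and bounding the coupling residuals by $-4\bar{L}\,VZ$ with $Z=d_{\cC}(\bx-\bar{\bx})$. This cannot work: Lipschitzness bounds $\norm{\nabla f_i(x_i)-\nabla f_i(p(\bx))}$ by $L_i\norm{x_i-p(\bx)}$, i.e.\ by the deviation of $\bx$ itself from consensus, and the analogous residual for $\bar{\bx}$ by $L_i\norm{\bar{x}_i-p(\bar{\bx})}$; these two deviations can both be enormous while their difference, which is all that $Z=d_{\cC}(\bx-\bar{\bx})$ controls, is zero. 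Concretely, take $|\cN|=2$, $n=1$, $f_1(x)=\tfrac12(x)_+^2$, $f_2(x)=\tfrac12(-x)_+^2$, so $\bar{f}(x)=\tfrac12 x^2$ has $\bar{\mu}=1$ and $L_1=L_2=1$; with $\bx=(-10,10)$ and $\bar{\bx}=\bx+\one\otimes 1$ one has $\bx-\bar{\bx}\in\cC$ (so $Z=0$ and the regularizer contributes nothing) yet $\fprod{\nabla f(\bx)-\nabla f(\bar{\bx}),\bx-\bar{\bx}}=0$, violating any bound of the form $\tfrac{\bar{\mu}}{|\cN|}V^2-cVZ$. So $f_\alpha$ is not globally strongly convex here, and no choice of constant in your coupling estimate can rescue the argument. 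What the paper actually proves (following Proposition 3.6 of the EXTRA paper) is the \emph{restricted} inequality with one of the two points anchored at the consensus point $\bx^*=\one\otimes x^*$: then $\bar{x}_i-p(\bar{\bx})=0$ for every $i$, the residuals of $\bx$ coincide with $Z$, and the coupling is bounded by $-2\bar{L}VZ$. That restricted inequality is all that is used downstream (see \eqref{eq:g-strong}), so you should either prove only that, or note explicitly that "strongly convex" in the lemma must be read as restricted strong convexity with respect to $\bx^*$. A minor further discrepancy: with coupling $-2\bar{L}VZ$ the smallest eigenvalue of your $2\times 2$ form would be $\tfrac{a+c}{2}-\sqrt{(\tfrac{a-c}{2})^2+\bar{L}^2}$, which is \emph{larger} than the stated $\mu_\alpha$; the paper's weaker constant comes from its $\omega$-splitting, not from an exact eigenvalue computation, so your eigenvalue route would actually recover $\mu_\alpha$ only if the off-diagonal were $-2\bar{L}$, which, as argued above, is not the estimate the restricted argument produces.
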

Next, 
we quantify the suboptimality and infeasibility of the DPDA-TV iterate sequence. {Recall that if $\ubar{\mu}>0$, then we set $\alpha=0$ and set $g=f$; otherwise, when $\ubar{\mu}=0$, it follows from Lemma~\ref{lem:restricted-convex-general} that for any $\alpha>\frac{4}{\bar{\mu}}\sum_{i\in\cN}L_i^2$, $f_\alpha$ is strongly convex with modulus $\mu_\alpha>0$; hence, we set $g=f_\alpha$ -- See also Remark~\ref{rem:rs-convexity}.}
\begin{theorem}\label{thm:dynamic-error-bounds}
{Suppose Assumptions \ref{assump:saddle-point}, \ref{assump:sconvex}, \ref{assump:communication_general} and \ref{assump:approximate-average} hold.}
Starting from $\bmu^0=\mathbf{0}$, $\btheta^0=\mathbf{0}$, and an arbitrary $\bx^0$, let $\{\bxi^k,\btheta^k,\bmu^k\}_{k\geq 0}$ be the iterate sequence generated by Algorithm DPDA-TV, displayed in Fig.~\ref{alg:PDD}, using $q_k\geq (5+c)\log_{1/\beta}(k+1)$ communication rounds for the $k$-th iteration for $k\geq 0$. Then $\{\bxi^k\}_{k\geq 0}$ converges to $\bx^*=\ones\otimes x^*$ such that $x^*$ is the optimal solution to \eqref{eq:central_problem}. 

Moreover, the following bounds hold for all $K\geq 1$:
{\small
\begin{subequations}\label{eq:rate_result-d}
\begin{align}
&\max\left\{|\Phi(\bar{\bxi}^K)-\varphi({\bf x}^*)|,~d_{\cC}(\bar{\bxi}^K)+\sum_{i\in\cN}\norm{\theta_i^*} d_{\mathcal{K}_i}(A_i\bar{\bxi}_i^K-b_i)\right\}\leq \frac{\Theta(K)}{N_K}=\cO\left(\frac{1}{K^2}\right),\\
&\norm{\bxi^K-\bx^*}^2\leq \frac{\tilde{\tau}^K}{\gamma^K}2\gamma^0~\Theta(K)=\cO\left(\frac{1}{K^2}\right),
\end{align}
\end{subequations}
}%
{and the parameters satisfy $N_K=\cO(K^2)$ and $\tilde{\tau}^K/\gamma^K=\cO(1/K^2)$, where $N_K=\sum_{k=1}^{K}{\gamma^{k-1}}$, $\bar{\bx}^{K}=N_K^{-1}\sum_{k=1}^{K}{\gamma^{k-1}}\bx^k$, and $\Theta(K)=\cO\big(\sum_{k=1}^K\beta^{q_{k-1}} k^4\big)$; hence, $\sup_{K\in\integers_+}\Theta(K)<\infty$.}
\end{theorem}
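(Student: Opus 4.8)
The plan is to recast DPDA-TV as an \emph{inexact} instance of the Chambolle--Pock PDA in~\eqref{eq:PDA} applied to the saddle-point problem~\eqref{eq:dynamic-saddle}, and then to track how the averaging errors of $\cR^k$ propagate through the convergence estimate~\eqref{eq:DPA-rate}. First I would verify that the exact recursion~\eqref{eq:pock-pd-2} is genuinely a PDA iteration for the choice of $\Phi$, $h$, $T$ in Definition~\ref{def:problem-components} and the Bregman function $D_k(\by,\bar\by)=\tfrac12\|\by-\bar\by\|^2_{\bD_{\kappa^k,\gamma^k}}$; in particular, $g=f$ when $\ubar\mu>0$ and $g=f_\alpha$ when $\ubar\mu=0$, so by Remark~\ref{rem:g} and Lemma~\ref{lem:restricted-convex-general}, $g$ has convexity modulus $\mu_g=\max\{\ubar\mu,\mu_\alpha\}>0$ and $\rho$ has modulus $0$. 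I would then check that the step-size rules in Step~4--5 of Fig.~\ref{alg:PDD}, together with $\gamma^0=\tfrac{\delta_2}{1+\delta_1}$, $\kappa_i^0=\gamma^0\tfrac{\delta_1}{\|A_i\|^2}$ and $\tau^0=\min_i(L_i+\delta_2+\alpha)^{-1}$, satisfy the three conditions $\tfrac1{\tau^k}+\mu\ge\tfrac1{\tau^{k+1}\eta^{k+1}}$, $(\tfrac1{\tau^k}-L)\ge\|T\|^2\kappa^k$ (here interpreted blockwise, using $\|T\|^2\le\|A\|^2+1$ and the diagonal structure of $\bD_{\kappa^k,\gamma^k}$), and $\kappa^k=\kappa^{k+1}\eta^{k+1}$; this is the same bookkeeping as in the static case (Theorem~\ref{thm:static-error-bounds}) and yields $N_K=\sum_{k=1}^K\gamma^{k-1}=\cO(K^2)$, $\tau^k=\cO(1/k)$, $\gamma^k=\cO(k)$, $\tilde\tau^K/\gamma^K=\cO(1/K^2)$.

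The core of the argument is the inexactness analysis. The true iterates $\{\bx^k,\blambda^k\}$ (never computed) satisfy the exact PDA~\eqref{eq:pock-pd-2}, whereas DPDA-TV computes $\{\bxi^k,\bmu^k\}$ via~\eqref{eq:inexact-rule}; the discrepancy is exactly that $\cP_\cC$ is replaced by $\cR^k$ in the $\bmu$- and $\bxi$-updates. I would introduce the error vectors $e^k_\lambda\triangleq\gamma^k(\cP_\cC(\bom^k)-\cP_\cB(\cR^k(\bom^k)))$ and $e^k_x\triangleq\alpha(\cR^k(\bxi^k)-\cP_\cC(\bxi^k))$, so that the inexact iteration is the exact iteration perturbed by these terms, and then re-derive the one-step descent inequality underlying~\eqref{eq:DPA-rate} carrying these perturbations. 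Summing over $k=1,\dots,K$ and using Young's inequality to absorb the error cross-terms against the strong-convexity/Bregman terms on the left, I expect to obtain a bound of the shape
\[
N_K\big(\cL(\bar\bxi^K,\by)-\cL(\bx,\bar\by^K)\big)+\tfrac{\kappa^K}{\tau^K}\tfrac{1}{2\kappa^0}\|\bx-\bar\bxi^K\|^2\;\le\;\Theta_0+\sum_{k=1}^K c_k\big(\|e^k_\lambda\|^2+\|e^k_x\|^2\big),
\]
with $c_k$ polynomial in the step-sizes, hence $c_k=\cO(k^{?})$. Using Assumption~\ref{assump:approximate-average}, $\|e^k_\lambda\|,\|e^k_x\|\le N\Gamma\beta^{q_k}\cdot(\text{bounded iterate norms})$, where boundedness of $\bxi^k$ is immediate from $\dom\rho_i$ being compact ($\bxi^k\in\cB$) and boundedness of $\bom^k$ follows because $\bmu^k/\gamma^k$ stays bounded (this needs a short separate induction, using $\cR^k$-approximation and the compact domain). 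Plugging $q_k\ge(5+c)\log_{1/\beta}(k+1)$ makes $\beta^{q_k}\le(k+1)^{-(5+c)}$, so each error term is $\cO(k^4\beta^{q_k})=\cO(k^{-1-c})$, whence $\sum_k c_k(\|e^k_\lambda\|^2+\|e^k_x\|^2)=\cO(\sum_k\beta^{q_{k-1}}k^4)<\infty$; call this finite quantity $\Theta(K)$, monotone and bounded. Dividing by $N_K=\cO(K^2)$ and specializing $(\bx,\by)$ to the primal-dual optimum $(\bx^*,\by^*)$ (which exists by Assumption~\ref{assump:saddle-point}) gives the suboptimality and $\|\bxi^K-\bx^*\|^2$ bounds; converting the saddle-gap bound into the stated feasibility bound $d_\cC(\bar\bxi^K)+\sum_i\|\theta_i^*\|d_{\cK_i}(A_i\bar\bxi^K_i-b_i)$ is done by the standard trick of evaluating $\cL(\bar\bxi^K,\by)$ at suitably chosen dual perturbations of $\by^*$ (shift $\btheta^*$ in the recession direction toward $\cK_i^\circ$ and $\blambda^*$ toward the consensus normal cone), exactly as in~\cite{aybat2016primal} and Theorem~\ref{thm:static-error-bounds}.

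The main obstacle I anticipate is the \emph{propagation bookkeeping}: unlike the static case, the error $e^k_\lambda$ enters the dual variable $\bmu^{k+1}$, which is then fed back through the $\bxi$-update and accumulated into later $\bom^\ell$; so one must show the errors do not compound geometrically. The key structural fact that saves the argument is that $\cR^k$ is applied to $\bom^k$ which is itself a bounded quantity (because the running dual sum is controlled by the compact primal domain), so the error at step $k$ is controlled by $\beta^{q_k}$ \emph{times an absolute constant}, not times a growing iterate norm — this decouples the recursion. A secondary technical point is that the inexact $\bxi$-step uses $\cR^k(\bxi^k)$ inside a prox operation, so one must be careful that the per-step inequality from Chambolle--Pock still holds with the gradient surrogate $\nabla g(\bxi^k)+e^k_x$; this is handled by treating $e^k_x$ as an additive gradient error and invoking the descent lemma for $g$ with its Lipschitz constant $L=L_{\max}+\alpha$. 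Finally, convergence of the actual sequence $\{\bxi^k\}$ to $\bx^*$ (not just the ergodic average) follows from the $\cO(1/K^2)$ bound on $\|\bxi^K-\bx^*\|^2$ directly, so no additional Opial-type argument is needed.
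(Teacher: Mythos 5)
Your overall strategy coincides with the paper's: view DPDA-TV as the exact recursion \eqref{eq:pock-pd-2} perturbed by the averaging errors of $\cR^k$, propagate those errors through a one-step estimate (the paper's Lemma~\ref{thm:dynamic-rate}, the inexact analogue of Lemma~\ref{lem:pda-lemma}), telescope with the weights $\gamma^k/\gamma^0$ using the step-size relations of Lemma~\ref{lem:stepsize}, bound the accumulated error by $\cO\big(\sum_k\beta^{q_{k-1}}k^4\big)$ using compactness of $\dom\rho$, and convert the saddle-gap bound into suboptimality/infeasibility via the dual perturbations $\tilde{\btheta},\tilde{\bmu}$. Two specific points in your error bookkeeping, however, would not survive as written. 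First, you propose to absorb the error cross-terms into the Bregman terms via Young's inequality, arriving at a bound of the form $\Theta_0+\sum_k c_k(\|e^k_\lambda\|^2+\|e^k_x\|^2)$. But the dual step-size conditions \eqref{cond-2-d}--\eqref{cond-3-d} hold with \emph{equality} for the parameters of Fig.~\ref{alg:PDD} (e.g.\ $\gamma^k\bD_{\gamma^k}=\id_{m_0}=\gamma^{k+1}\bD_{\gamma^{k+1}}$), so the entire term $\tfrac{1}{2}\norm{\by-\by^{k+1}}^2_{\bD_{\kappa^k,\gamma^k}}$ is consumed by the telescoping and there is no slack left to absorb $\norm{\tilde{\bmu}-\bmu^{k+1}}^2$ into. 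The paper instead keeps the cross-terms \emph{linear} in the error norms, i.e.\ $E_1^{k+1}(\bmu)=\norm{\be_1^{k+1}}\big(4\gamma^k\sqrt{N}\Delta+\norm{\bmu-\bmu^{k+1}}\big)$, and bounds the multiplying factors directly from the compact domain together with the explicit growth estimate \eqref{eq:mu-bound}.

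Second, your closing ``key structural fact'' --- that $\bom^k$ is bounded, so the step-$k$ error is $\beta^{q_k}$ times an absolute constant --- is false, and it contradicts your own (correct) earlier count of $\cO(k^4\beta^{q_k})$ per iteration. From \eqref{eq:mu-bound} one only gets $\norm{\bmu^k}/\gamma^k=\cO\big(\tfrac{1}{\gamma^k}\sum_{t<k}\gamma^t\big)=\cO(k)$, hence $\norm{\bom^k}=\cO(k)$ and $\norm{\be_1^{k+1}}=\cO(k\,\beta^{q_k})$; multiplying by the weight $\gamma^k=\cO(k)$ and by $\norm{\tilde{\bmu}-\bmu^{k+1}}=\cO(k^2)$ is exactly what produces the $k^4$ in $\Theta_2(K)$. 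This polynomial growth is the reason the theorem demands $q_k\geq(5+c)\log_{1/\beta}(k+1)$ rather than merely $(1+c)\log_{1/\beta}(k+1)$. With the accounting corrected as above, your conclusion $\sum_{k}\beta^{q_{k-1}}k^4<\infty$, the finiteness of $\Theta(K)$, and the stated $\cO(1/K^2)$ rates all follow as in the paper.
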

\begin{remark}
Note that, {at the $K$-th iteration}, the suboptimality, infeasibility and consensus violation are $\cO\left(\tfrac{1}{N_K}~\Theta(K)\right)$ in the ergodic sense, and the distance of iterates to $\bx^*$ is $\cO\left(\tfrac{\tilde{\tau}^K}{\gamma^K}~\Theta(K)\right)$ where $\Theta(K)$ denotes the error accumulations due to average approximation.
Moreover, $\Theta(K)$ can be bounded above for all $K\geq 1$ as $\Theta_2(K)\leq C_1 \sum_{k=1}^K\beta^{q_{k-1}} k^4$; therefore, for any $c>0$, choosing $\{q_k\}_{k\in\integers_+}$ as stated in Theorem~\ref{thm:dynamic-error-bounds} ensures that $\sum_{k=1}^{\infty}\beta^{q_{k-1}}k^4<1+\tfrac{1}{c}$.
Moreover, for any $c>0$, setting $q_k=(5+c)\log_{\tfrac{1}{\beta}}(k+1)$ for $k\geq 0$ implies that the total number of communication rounds right before the $K$-th iteration is equal to $t_K=\sum_{k=0}^{K-1}q_k\leq(5+c)K\log_{\tfrac{1}{\beta}}(K)$.
\vspace*{-5mm}
\end{remark}
\section{Numerical Section}
\label{sec:numerics}
In this section, we illustrate the performance of DPDA and DPDA-TV for solving synthetic C-LASSO problems. We first test the effect of network topology on the performance of proposed algorithms, and then we compare DPDA and DPDA-TV with other distributed primal-dual algorithms, DPDA-S and DPDA-D, proposed in~\cite{aybat2016primal} for solving \eqref{eq:central_problem} -- it is shown in~\cite{aybat2016primal} that both DPDA-S and DPDA-D converge
with $\cO(1/K)$ ergodic rate when $\bar{\varphi}$ is merely convex. In fact, when $\bar{\varphi}$ is strongly convex with modulus $\mu>0$, using the fact that ${\varphi}(\bx^*)-{\varphi}(\bar{\bx}^K)\geq \frac{\mu}{2}\norm{\bar{\bx}^K-\bx^*}^2$, it immediately follows that $\norm{\bar{\bx}^K-\bx^*}^2\leq \cO(1/K)$.

We consider an isotonic C-LASSO problem over network $\cG^t=(\cN,\cE^t)$ for $t\geq 0$. This problem can be formulated in a centralized form as
$x^*\triangleq\argmin_{x\in\reals^{n}} ~\left\{ \frac{1}{2}\norm{Cx-d}^2+\lambda\norm{x}_1:\ Ax\leq {\bf 0}\right\}$,
where the matrix $C=[C_i]_{i\in\cN}\in\reals^{m|\cN|\times n}$, $d=[d_i]_{i\in \cN}\in\reals^{m|\cN|}$, and $A\in\reals^{n-1\times n}$. In fact, the matrix $A$ captures the isotonic feature of vector $x^*$, and can be written explicitly as, $A(\ell,\ell)=1$ and $A(\ell,\ell+1)=-1$, for $1\leq \ell \leq n-1$, otherwise it is zero.
Each agent $i$ has access to $C_i$, $d_i$, and $A$; hence, by making local copies of $x$, the decentralized formulation 
can be expressed as
{\small
\begin{align}\label{prob:lasso-dist}
\min_{\bx=[x_i]_{i\in\cN}\in\cC} ~\left\{ \frac{1}{2}\sum_{i\in\cN}\norm{C_ix_i-d_i}^2+\frac{\lambda}{|\cN|}\sum_{i\in\cN}\norm{x_i}_1~: \quad Ax_i\leq {\bf 0},\quad i\in\cN \right\},
\end{align}}%
where $\cC$ is the consensus set - see \eqref{eq:consensus_set}.

In the rest, we set $n=20$, $m=n+2$, $\lambda=0.05$ and $\cK_i=-\reals^{n-1}_{+}$ for $i\in\cN$. Moreover, for each $i\in\cN$, we generate $C_i\in\reals^{m\times n}$ as follows: after $mn$ entries i.i.d. with Gaussian distribution are sampled, the condition number of $C_i$ is normalized by sampling the singular values from $[1,3]$ uniformly at random. We generate the first 5 and the last 5 components of $x^*$ by sampling from $[-10,0]$ and $[0,10]$ uniformly at random in ascending order, respectively, and the other middle 10 components are set to zero; hence, $[x^*]_j\leq [x^*]_{j+1}$ for $j=1,\ldots,n-1$. Finally, we set $d_i=C_i({x}^*+\epsilon_i)$,  where $\epsilon_i\in\reals^n$ is a random vector with i.i.d. components following Gaussian distribution with zero mean and standard deviation of $10^{-3}$.\\

{\bf Generating static undirected network:} $\cG=(\cN,\cE)$ is generated as a random small-world network. Given $|\cN|$ and the desired number of edges $|\cE|$, we choose $|\cN|$ edges creating a random cycle over nodes, and then the remaining $|\cE|-|\cN|$ edges are selected uniformly at random.\\

{\bf Generating time-varying undirected network:} Given $|\cN|$ and the desired number of edges $|\cE_0|$ for the initial graph, we generate a random small-world $\cG_0=(\cN,\cE_0)$ as described above. Given $M\in\integers_+$, and $p\in(0,1)$, for each $k\in\integers_+$, we generate $\cG^t=(\mathcal{N},\mathcal{E}^t)$, the communication network at time $t\in\{(k-1)M,\ldots,kM-2\}$ by sampling $\lceil p |\cE_0|\rceil$ edges of $\cG_0$ uniformly at random and we set $\mathcal{E}^{kM-1}=\cE_0\setminus \bigcup_{t=(k-1)M}^{kM-2}\cE^t$. In all experiments, we set $M=5$, $p=0.8$ and the number of communications per iteration is set to $q_k=10\ln(k+1)$.
\subsection{Effect of Network Topology}
In this section, we test the performance of DPDA and DPDA-TV on \emph{undirected} communication networks. To illustrate the effect of network topology, we consider four scenarios in which the number of nodes $|\cN|\in\{10,~40\}$ and the average number of edges per node $(|\cE|/|\cN|)$ is either $\approx 1.5$ or $\approx 4.5$. For each scenario, we plot 
both the 
relative error, i.e., $\max_{i\in\cN}\norm{x_i^k-x^*}/ \norm{x^*}$ and the infeasibility, i.e., $\max_{i\in\cN}d_{\cK_i}(A\bar{x}_i^k)=\max_{i\in\cN}\norm{(A\bar{x}_i^k)_+}$ versus iteration number $k$. {All the plots show the average statistics over all 25 randomly generated replications.}
\newpage
{\bf Testing DPDA on static undirected communication networks:} 
We generated the static small-world networks $\cG=(\cN,\cE)$ as described above for $(|\cN|,|\cE|)\in\{(10,15),~(10,45),$ $(40,60),~(40,180)\}$ and solve the saddle-point formulation \eqref{eq:static-saddle} corresponding to \eqref{prob:lasso-dist} using DPDA. 
For DPDA, displayed in Fig.~\ref{alg:PDS}, we chose $\delta_1=\max_{i\in\cN}d_i=d_{\max}$ and $\delta_2=2\max_{i\in\cN}L_i=2L_{\max}$, which lead to the initial step-sizes as $\gamma^0=\tfrac{2}{3}\frac{L_{\max}}{d_{\max}}$, $\tau^0=\frac{1}{3L_{\max}}$, and $\kappa^0=\tfrac{2}{3}\frac{L_{\max}}{\norm{A}^2}$.

In Fig.~\ref{static-net}, we plot $\max_{i\in\cN}\norm{x_i^k-x^*}/ \norm{x^*}$ and $\max_{i\in\cN}\norm{(A\bar{x}_i^k)_+}$ statistics for DPDA versus iteration number $k$. Note that compared to average edge density, the network size has more influence on the convergence rate, i.e., the smaller the network faster the convergence is. On the other hand, for fixed size network, as expected, higher the density faster the convergence is.
\begin{figure}[h]
\centering
\includegraphics[scale=0.5]{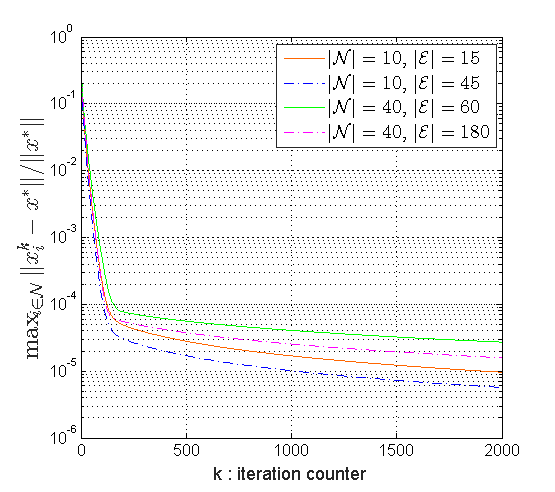}
\includegraphics[scale=0.5]{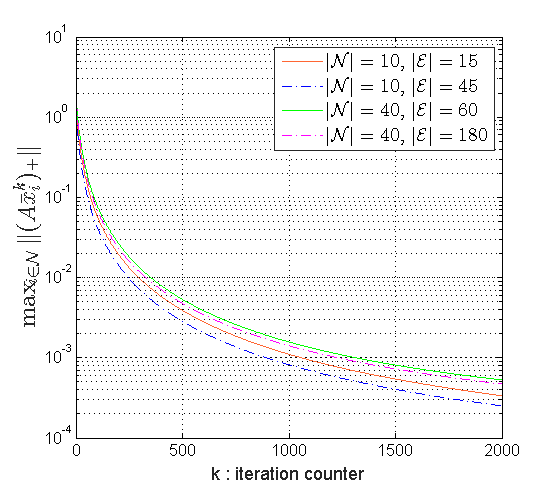}
\caption{Effect of network topology on the convergence rate of DPDA}
\label{static-net}
\end{figure}

{\bf Testing DPDA-TV on time-varying undirected communication networks:} We first generated an undirected graph $\cG_u=(\cN,\cE_u)$ as in the static case, and let $\cG_0=\cG_u$. Next, we generated $\{\cG^t\}_{t\geq 1}$ as described above by setting $M=5$ and $p=0.8$. For each consensus round $t\geq 1$, $V^t$ is formed according to Metropolis weights, i.e., for each $i\in\cN$, $V^t_{ij}=1/(\max\{d_i,d_j\}+1)$ if $j\in\cN_i^t$, $V^t_{ii}=1-\sum_{i\in\cN_i}V^t_{ij}$, and $V^t_{ij}=0$ otherwise -- see~\eqref{eq:approx-average-dual-undirected} for our choice of $\cR^k$.

For DPDA-TV, displayed in Fig.~\ref{alg:PDD}, we chose $\delta_1=\delta_2=1$, which lead to the initial step-sizes as {$\gamma^0=\frac{1}{2}$, $\tau^0=\frac{1}{L_{\max}+1}$, and $\kappa^0=\frac{1}{2\norm{A}^2}$.}
In Fig.~\ref{dynamic-net}, we plot $\max_{i\in\cN}\norm{\xi_i^k-x^*}/ \norm{x^*}$ and $\max_{i\in\cN}\norm{(A\bar{\xi}_i^k)_+}$ statistics for DPDA-TV versus iteration number $k$ --  we used $\{\bxi^k\}$ to compute the error statistics instead of $\{\bx^k\}$ as $\bx^k$ is never actually computed for DPDA-TV. Note that network size and average edge density have the same impact on the rate as in the static case.
\begin{figure}[h]
\centering
\includegraphics[scale=0.5]{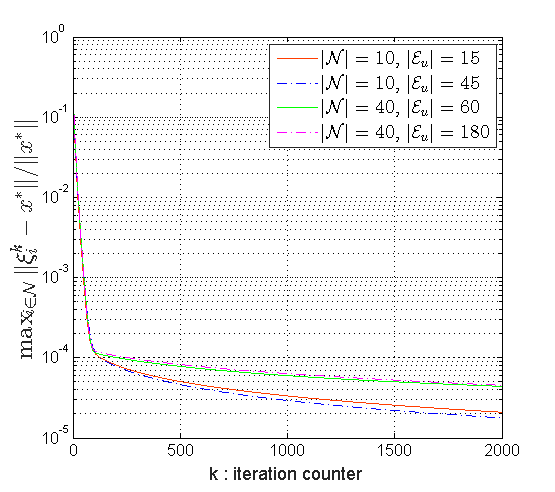}
\includegraphics[scale=0.5]{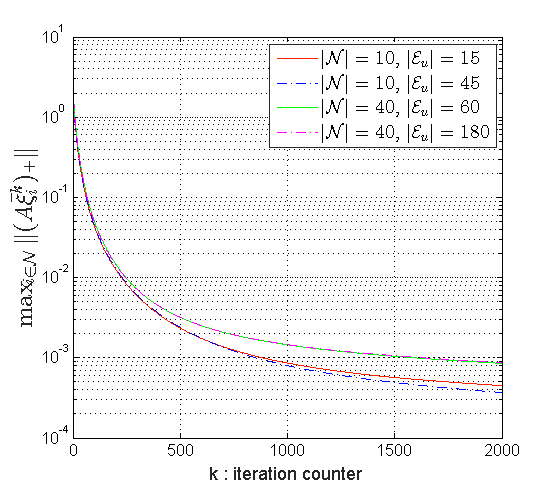}
\caption{Effect of network topology on the convergence rate of DPDA-TV}
\label{dynamic-net}
\end{figure}
\subsection{Comparison with other methods}
We also compared our methods with DPDA-S and DPDA-D, in terms of the relative error and infeasibility of the ergodic iterate sequence, i.e., $ \max_{i\in\cN}\norm{\bar{x}_i^k-x^*}/ \norm{x^*}$ and $\max_{i\in\cN}\norm{(A\bar{x}_i^k)_+}$. We further report the performance of our algorithms in terms of relative error of the actual iterate sequence, i.e., $\max_{i\in\cN}\norm{{x}_i^k-x^*}/ \norm{x^*}$. For DPDA-D and DPDA-TV, we used $\{\bxi^k\}$ sequence to compute the error statistics instead of $\{\bx^k\}$ as $\bx^k$ is never actually computed. 
In this section we fix the number of nodes to $|\cN|=10$ and the average edge density to $|\cE|/|\cN|=4.5$ -- we observed the same convergence behavior for the other network scenarios discussed in the previous section.\\%
\indent{\bf Static undirected network:} We generated $\cG=(\cN,\cE)$ and chose the algorithm parameters as in the previous section. Moreover, the step-sizes of DPDA-S are set to the initial steps-sizes of DPDA. As it can be seen in Fig. \ref{static-compare}, DPDA has faster convergence when compared to DPDA-S.
\begin{figure}[h]
\centering
\includegraphics[scale=0.5]{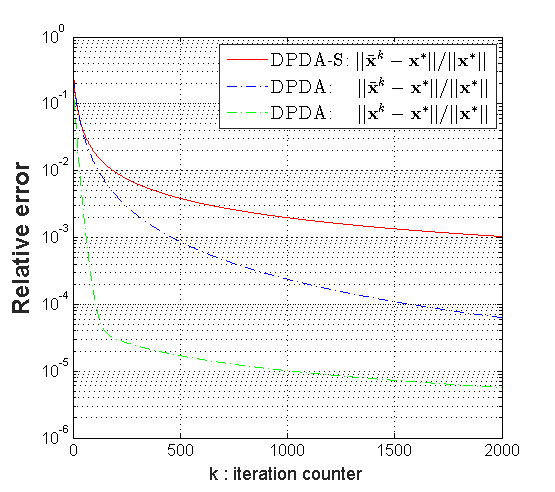}
\includegraphics[scale=0.5]{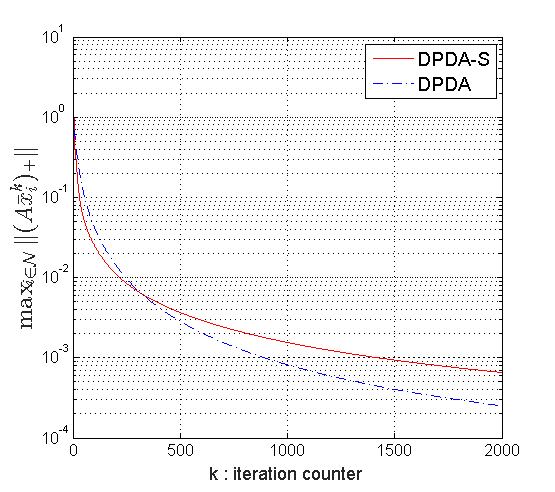}
\caption{Comparison of DPDA and DPDA-S over undirected static network}
\label{static-compare}
\end{figure}

{\bf Time-varying undirected network:} We generated the network sequence $\{\cG^t\}_{t\geq 0}$ and chose the parameters as in the prvious section. Moreover, the step-sizes of DPDA-D are set to the initial steps-sizes of DPDA-TV. As it can be seen in Fig. \ref{undirected-compare}, DPDA-TV has faster convergence when compared to DPDA-D.
\begin{figure}[h]
\centering
\includegraphics[scale=0.5]{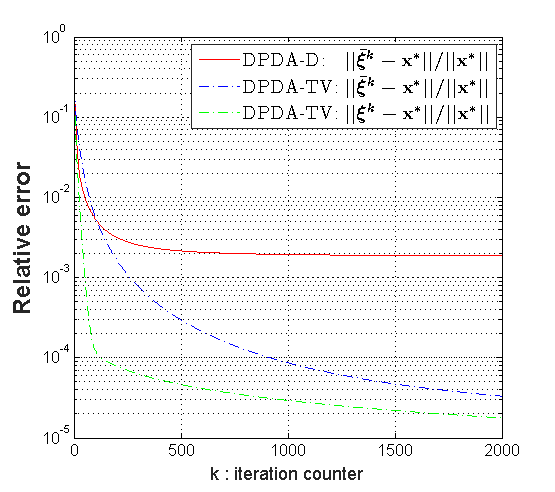}
\includegraphics[scale=0.5]{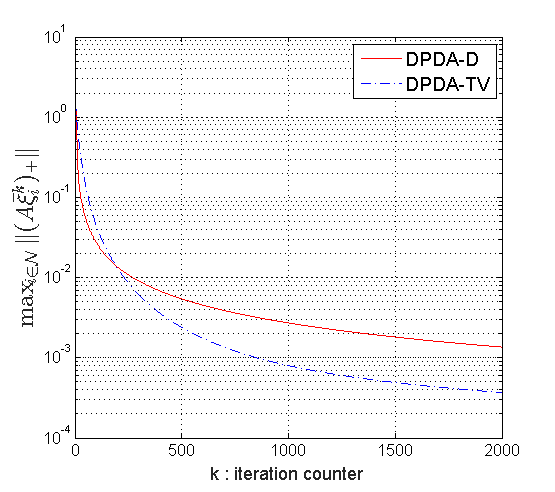}
\caption{Comparison of DPDA-TV and DPDA-D over undirected time-varying network}
\label{undirected-compare}
\end{figure}
\newpage
{\bf Time-varying directed network:} In this scenario, we generated time-varying communication networks similar to \cite{nedich2016achieving}. Let $\cG_d=(\cN,\cE_d)$ be the directed graph shown in Fig.~\ref{fig:Gd} where it has $|\cN|=12$ nodes and $|\cE_d|=12$ directed edges. {We set $\cG_0=\cG_d$, and we generate $\{\cG^t\}_{t\geq 0}$ generated as in the undirected case with parameters $M=5$ and $p=0.8$; hence, $\{\cG^t\}_{t\geq 0}$ is 
$M$-strongly-connected. Moreover, communication weight matrices $V^t$ are formed according to rule \eqref{eq:directed-weights}. We chose the initial step-sizes for DPDA-TV as in the time-varying undirected case, and the constant step-sizes of DPDA-D is set to the initial steps-sizes of DPDA-TV.
In Fig.~\ref{directed-compare} we compare DPDA-TV against DPDA-D. We observe that over time-varying directed networks DPDA-TV again outperforms DPDA-D for both statistics.}


\vspace*{-5mm}
\begin{figure}[h]
\centering
\includegraphics[scale=0.5]{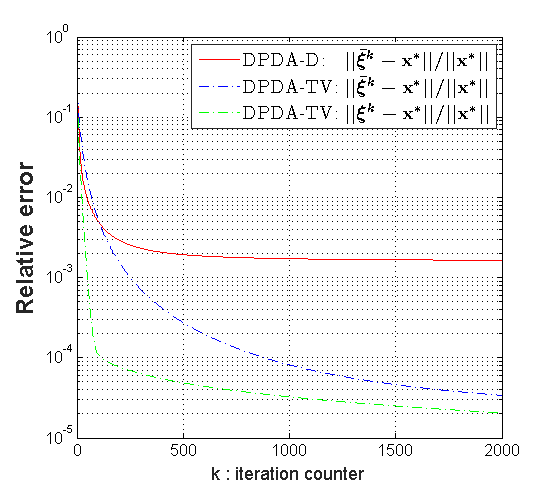}
\includegraphics[scale=0.5]{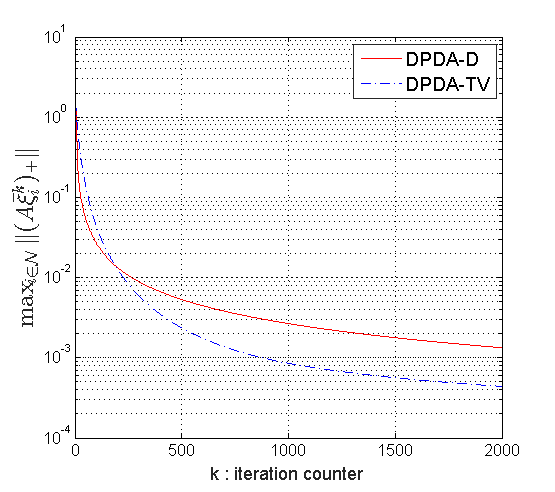}
\caption{Comparison of DPDA-TV and DPDA-D over directed time-varying network.}
\label{directed-compare}
\end{figure}

\begin{figure}
\centering
\begin{minipage}[c]{\textwidth}
    \begin{center}
      \begin{tikzpicture}[scale=1.25]
        \coordinate (x1) at (-1.5,0);
        \coordinate (x2) at (5.1,1.2);
        \coordinate (x3) at (4.75,-0.75);
        \coordinate (x4) at (1.9,2);
        \coordinate (x5) at (3.2,-1.5);
        \coordinate (x6) at (1.5,-1.5);
        \coordinate (x7) at (3.1,1.5);
        \coordinate (x8) at (-1,-1.5);
        \coordinate (x9) at (1.25,1.25);
        \coordinate (x10) at (0.5,0.5);
        \coordinate (x11) at (0.4,1.5);
        \coordinate (x12) at (4,0.2);


        \draw [arrows={- triangle 45}] (x1) -- (x10);
        \node[align=left, above] at (x10) {\small $10$};

        \draw [arrows={- triangle 45}] (x1) -- (x6);
        \node[align=left, below] at (x6) {\small $6$};

        \draw [arrows={- triangle 45}] (x8) -- (x1);
        \node[align=left, left] at (x1) {\small $1$};

        \draw [arrows={- triangle 45}] (x8) -- (x10);

        \draw [arrows={triangle 45 - triangle 45}] (x8) -- (x6);
        \node[align=left, left] at (x8) {\small $8$};

        \draw [arrows={- triangle 45}] (x6) -- (x3);
        \node[align=left, right] at (x3) {\small $3$};

        \draw [arrows={- triangle 45}] (x11) -- (x1);

        \draw [arrows={- triangle 45}] (x9) -- (x11);
        \node[align=left, left] at (x11) {\small $11$};

        \draw [arrows={- triangle 45}] (x9) -- (x3);

         \draw [arrows={- triangle 45}] (x9) -- (x5);
        \node[align=left, left] at (x5) {\small $5$};

        \draw [arrows={- triangle 45}] (x4) -- (x9);
        \node[align=left, right] at (x9) {\small $9$};

        \draw [arrows={- triangle 45}] (x4) -- (x11);

        \draw [arrows={- triangle 45}] (x7) -- (x4);
        \node[align=left, above] at (x4) {\small $4$};

        \draw [arrows={- triangle 45}] (x7) -- (x12);
        \node[align=left, right] at (x12) {\small $12$};

        \draw [arrows={- triangle 45}] (x7) -- (x6);

        \draw [arrows={- triangle 45}] (x2) -- (x10);

        \draw [arrows={- triangle 45}] (x12) -- (x6);

        \draw [arrows={- triangle 45}] (x12) -- (x2);
        \node[align=left, above] at (x2) {\small $2$};

        \draw [arrows={- triangle 45}] (x12) -- (x5);

        \draw [arrows={- triangle 45}] (x3) -- (x12);

        \draw [arrows={- triangle 45}] (x5) -- (x3);

        \draw [arrows={- triangle 45}] (x10) -- (x7);
        \node[align=left, above] at (x7) {\small $7$};

        \draw [arrows={- triangle 45}] (x10) -- (x5);

        \filldraw[fill=red!50!](x1) circle [radius=0.08];
        \filldraw[fill=red!50!] (x2) circle [radius=0.08];
        \filldraw[fill=red!50!] (x3) circle [radius=0.08];
        \filldraw[fill=red!50!] (x4) circle [radius=0.08];
        \filldraw[fill=red!50!] (x5) circle [radius=0.08];
        \filldraw[fill=red!50!](x6) circle [radius=0.08];
        \filldraw[fill=red!50!] (x7) circle [radius=0.08];
        \filldraw[fill=red!50!] (x8) circle [radius=0.08];
        \filldraw[fill=red!50!] (x9) circle [radius=0.08];
        \filldraw[fill=red!50!] (x10) circle [radius=0.08];
        \filldraw[fill=red!50!] (x11) circle [radius=0.08];
        \filldraw[fill=red!50!] (x12) circle [radius=0.08];
      \end{tikzpicture}
    \end{center}
\end{minipage}
 \caption{$\cG_d=(\cN,\cE_d)$ directed strongly connected graph} \label{fig:Gd}
\end{figure}

\singlespacing
\bibliographystyle{unsrt}
\bibliography{papers}
\newpage
\section{Appendix}
\subsection{Proof of Lemma~\ref{lem:restricted-convex-static}}
Let $\bx^*=\ones_{|\cN|}\otimes x^*$, where $x^*$ is the unique optimal solution to \eqref{eq:central_problem}, and according to Assumption~\ref{assump:sconvex}, $\bar{f}$ is {strongly convex}
with modulus $\bar{\mu}>0$. Note that any $W$ as given in Definition~\ref{def:W} is positive semidefinite, and ${\ns(W)}=\spn\{\ones\}$. In the rest, we will use these properties of $W$. Fix some arbitrary $\alpha>\frac{4}{{\lambda_2} \bar{\mu}}\sum_{i\in\cN}L_i^2$ and $\bx\in\reals^{n|\cN|}$.

$\bx\in\reals^{n|\cN|}$ can be decomposed into $\bu\in \spn\{\ones\}$ and $\bv\in \spn\{\ones\}^\perp$ where $\bx=\bu+\bv$ and $\norm{\bx}^2=\norm{\bu}^2+\norm{\bv}^2$. From definition of $f_\alpha$ we have that,
{\small
\begin{align}\label{grad-g-s}
\fprod{\nabla f_{\alpha}(\bx)-\nabla f_{\alpha}(\bx^*),~\bx-\bx^*}=\fprod{\nabla f(\bx)-\nabla f(\bx^*),~\bx-\bx^*}+\alpha~{\norm{\bx-\bx^*}^2_{W\otimes\id_n}.}
\end{align}
}%
Let $N\triangleq |\cN|$ and $\bar{L}\triangleq \sqrt{\frac{\sum_{i\in\cN}L_i^2}{N}}$. The inner product of $\fprod{\nabla f(\bx)-\nabla f(\bx^*),~\bx-\bx^*}$ can be bounded by using the following inequalities: 
{\small
\begin{subequations}\label{eq:fprod_bound}
\begin{align}
&\fprod{\nabla f(\bu)-\nabla f(\bx^*),~\bu-\bx^*}\geq\frac{\bar{\mu}}{N} \norm{\bu-\bx^*}^2, \\
&\fprod{\nabla f(\bu)-\nabla f(\bx^*),~\bx-\bu} \geq  -\sum_{i\in\cN} L_i\norm{u-x^*}\norm{x_i-u}\geq -\bar{L}\norm{\bu-\bx^*}\norm{\bv},\\
&\fprod{\nabla f(\bx)-\nabla f(\bu),~\bx-\bu}\geq 0, \\
&\fprod{\nabla f(\bx)-\nabla f(\bu),~\bu-\bx^*} \geq -\sum_{i\in\cN} L_i\norm{x_i-u}\norm{u-x^*}\geq \bar{L}\norm{\bu-\bx^*}\norm{\bv},
\end{align}
\end{subequations}
}%
which follow from convexity, Lipschitz differentiability, and strong convexity of $f$. Summing above inequalities leads to,
{\small
\begin{align}\label{grad-f-bound-s}
{\fprod{\nabla f(\bx)-\nabla f(\bx^*),~\bx-\bx^*} \geq \frac{\bar{\mu}}{N}\norm{\bu-\bx^*}^2-2\bar{L}\norm{\bu-\bx^*}\norm{\bv}.} 
\end{align}
}%
Hence, strong convexity of $f_\alpha$ follows from \eqref{grad-g-s}, \eqref{grad-f-bound-s}. Indeed, it follows from {$W\in\sy^N_+$} and ${\ns(W)}=\spn\{\ones\}$ that we have $\norm{\bx-\bx^*}^2_{W\otimes\id_n}=\bv^\top \left(W\otimes \id_n \right) \bv\geq {\lambda}_2 \norm{\bv}^2$, where ${\lambda}_2=\lambda^+_{\min}(W)$ is the second smallest eigenvalue of $W$. Therefore,
{\small
\begin{align}\label{g-strong-convex-s}
\fprod{\nabla f_{\alpha}(\bx)-\nabla f_{\alpha}(\bx^*),~\bx-\bx^*}\geq \frac{\bar{\mu}}{N}\norm{\bu-\bx^*}^2-2\bar{L}\norm{\bu-\bx^*}\norm{\bv}+ \alpha{\lambda}_2 \norm{\bv}^2.
\end{align}
}%

Next, fix some arbitrary $\omega\geq 0$. Then either \textbf{(i)} $\norm{\bv}\leq \omega \norm{\bu-\bx^*}$, or \textbf{(ii)} $\norm{\bv}\geq \omega \norm{\bu-\bx^*}$ holds. If \textbf{(i)} is true, then \eqref{g-strong-convex-s} implies
{\small
\begin{align}\label{g-strong-convex-s-i}
\fprod{\nabla f_{\alpha}(\bx)-\nabla f_{\alpha}(\bx^*),~\bx-\bx^*}&\geq \left(\frac{\bar{\mu}}{N}-2\omega\bar{L}\right)\norm{\bu- \bx^*}^2+\alpha {\lambda}_2 \norm{\bv}^2\nonumber  \\
& \geq \min\left\{\frac{\bar{\mu}}{N}-2\omega\bar{L}, ~\alpha {\lambda}_2\right\} \norm{\bx-\bx^*}^2;
\end{align}
}%
on the other hand, if \textbf{(ii)} is true, then \eqref{g-strong-convex-s} implies
{\small
\begin{align}\label{g-strong-convex-s-ii}
\fprod{\nabla f_{\alpha}(\bx)-\nabla f_{\alpha}(\bx^*),~\bx-\bx^*}&\geq \frac{\bar{\mu}}{N}\norm{\bu- \bx^*}^2+\left(\alpha {\lambda}_2-\frac{2\bar{L}}{\omega}\right)\norm{\bv}^2 \nonumber \\
& \geq \min\left\{\frac{\bar{\mu}}{N}, ~\alpha{\lambda}_2-\frac{2\bar{L}}{\omega}\right\} \norm{\bx-\bx^*}^2.
\end{align}
}%
Combining \eqref{g-strong-convex-s-i} and \eqref{g-strong-convex-s-ii} we conclude that,
{\small
\begin{align}\label{g-strong-convex-s-result}
\fprod{\nabla f_{\alpha}(\bx)-\nabla f_{\alpha}(\bx^*),~\bx-\bx^*}\geq \min\left\{\frac{\bar{\mu}}{N}-2\omega\bar{L}, ~\alpha {\lambda}_2-\frac{2\bar{L}}{\omega}\right\} \norm{\bx-\bx^*}^2,
\end{align}
}%
Since $\omega\geq 0$ is arbitrary, $f_{\alpha}$ is strongly convex
with modulus $\mu_\alpha=\max_{\omega\geq 0}\min $ $\left\{\frac{\bar{\mu}}{N}-2\bar{L}\omega, ~\alpha {\lambda}_2-\frac{2\bar{L}}{\omega}\right\}$.
Note $\mu_\alpha$ is attained for $\omega_\alpha\geq 0$ such that $\frac{{\bar{\mu}}}{N}-2\bar{L}\omega_\alpha=\alpha {\lambda}_2-\frac{2\bar{L}}{\omega_\alpha}$, which implies that $\omega_\alpha=\frac{1}{2}\left(\frac{\bar{\mu}/N~-\alpha{\lambda}_2}{2\bar{L}} + \sqrt{{\left(\frac{\bar{\mu}/N~-\alpha{\lambda}_2}{2\bar{L}}\right)^2} +4} \right)$. 
Moreover, $\mu_\alpha=\frac{\bar{\mu}}{N}-2\bar{L}\omega_\alpha$ is the value given in the statement of the lemma, and we have $\frac{\bar{\mu}}{N}>\mu_\alpha>0$ for any $\alpha>\frac{4}{\lambda_2 \bar{\mu}}\sum_{i\in\cN}L_i^2$. It is worth mentioning that $\mu_\alpha$ is a concave increasing function of $\alpha$ over $\reals_{++}$, and $\sup_{\alpha>0} \mu_\alpha = \lim_{\alpha\nearrow\infty}\mu_\alpha=\frac{\bar{\mu}}{N}$.
\subsection{Key Lemmas for the Proof of Theorem~\ref{thm:static-error-bounds}}
\begin{defn}\label{def:parameters}
Let $T=[A^\top~M^\top]^\top$ for $A\triangleq\diag([A_i]_{i\in\mathcal{N}})\in\reals^{m \times n|\cN|}$. Given $\alpha,\mu,\delta_1>0$, and arbitrary sequences $\{\tau^k\},\{\gamma^k\}\subset\reals_{++}$, $\{\kappa_i^k\}_{k\geq 0}\subset\reals_{++}$ for $i\in\mathcal{N}$. For $k\geq 0$, define {$\mathbf{D}_{\tau^k}\triangleq\frac{1}{\gamma^k}\id_{n|\cN|}$}, {$\mathbf{\widetilde{D}}_{\tau^{k}}\triangleq \left({1\over \tau^{k}}- \mu\right)\id_{n|\cN|}$}, $\mathbf{\bar{D}}_{\tau^k}\triangleq\diag([({1\over \tau^k} -{(L_i+2\alpha d_i)})\id_{n}]_{i\in \mathcal{N}})$, and
$\mathbf{\bar{Q}}_k\triangleq\begin{bmatrix}
\cA^k &-\eta^k T^\top\\
-\eta^k T & \mathbf{D}_{\kappa^k,\gamma^k}
\end{bmatrix}$, where $\cA^k\triangleq(\eta^k)^2\gamma^{k}\diag([(2d_i+\delta_1)\id_n]_{i\in\cN})\succ 0$, and $\mathbf{D}_{\kappa^k,\gamma^k}$ is defined in Definition~\ref{def:bregman-s}.
\end{defn}
In order to prove Theorem~\ref{thm:static-error-bounds}, we first prove Lemma~\ref{lem:pda-lemma} below which help us to appropriately bound $\mathcal{L}(\bar{\bx}^K,\by)-\mathcal{L}(\bx^*,\bar{\by}^K)$ {for any $\by\in\cY$ and $\norm{\bx^K-\bx^*}$. In order to prove Lemma~\ref{lem:pda-lemma}, we first need to show the following two lemmas, Lemma~\ref{lem:schur-s} and Lemma~\ref{lem:stepsize-s}, describing a proper choice for the step size sequences.}
\begin{lemma}\label{lem:schur-s}
Given $\delta_1>0$. For any $k\geq 0$, $\mathbf{\bar{Q}}_k\succeq \mathbf{0}$ if $\eta^k>0$, and positive numbers $\{\kappa_i^k\}_{i\in\mathcal{N}}$ and $\gamma^k$ are chosen such that
{\small
\begin{align}\label{eq:schur-cond-static}
\frac{\kappa_i^k \norm{A_i}^2}{\gamma^k} \leq \delta_1,\quad \forall~i\in\mathcal{N}.
\end{align}
}%
\end{lemma}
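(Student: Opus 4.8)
The matrix $\mathbf{\bar{Q}}_k$ has the $2\times 2$ block form $\begin{bmatrix}\cA^k & -\eta^k T^\top\\ -\eta^k T & \mathbf{D}_{\kappa^k,\gamma^k}\end{bmatrix}$, and I would like to conclude $\mathbf{\bar{Q}}_k\succeq \mathbf{0}$. First I would note that the $(1,1)$ block $\cA^k=(\eta^k)^2\gamma^k\diag([(2d_i+\delta_1)\id_n]_{i\in\cN})$ is strictly positive definite whenever $\eta^k>0$ (since $d_i\geq 0$ and $\delta_1>0$), so it is invertible. By the standard Schur complement characterization, $\mathbf{\bar{Q}}_k\succeq\mathbf{0}$ if and only if $\cA^k\succ 0$ and the Schur complement $\mathbf{D}_{\kappa^k,\gamma^k} - (\eta^k T)(\cA^k)^{-1}(\eta^k T^\top) = \mathbf{D}_{\kappa^k,\gamma^k} - (\eta^k)^2 T(\cA^k)^{-1}T^\top\succeq \mathbf{0}$.

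**Next I would compute the Schur complement explicitly.** Since $\cA^k=(\eta^k)^2\gamma^k\diag([(2d_i+\delta_1)\id_n]_{i\in\cN})$, the factor $(\eta^k)^2$ cancels: $(\eta^k)^2 T(\cA^k)^{-1}T^\top = \tfrac{1}{\gamma^k}T\,\diag([\tfrac{1}{2d_i+\delta_1}\id_n]_{i\in\cN})\,T^\top$. Recalling $T=[A^\top~M^\top]^\top$ with $A=\diag([A_i]_{i\in\cN})$ and $M^\top M = \Omega\otimes\id_n$ where $\Omega_{ii}=d_i$, I would expand $T\,\diag([\tfrac{1}{2d_i+\delta_1}\id_n])\,T^\top$ as a $2\times 2$ block matrix in the $(\btheta,\blambda)$ coordinates, with blocks built from $A_i\tfrac{1}{2d_i+\delta_1}A_i^\top$, $A_i\tfrac{1}{2d_i+\delta_1}M_i^\top$, and the Laplacian-type term from $M$. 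Then using $\mathbf{D}_{\kappa^k,\gamma^k}=\diag(\mathbf{D}_{\kappa^k},\mathbf{D}_{\gamma^k})$ with $\mathbf{D}_{\kappa^k}=\diag([\tfrac{1}{\kappa_i^k}\id_{m_i}])$ and $\mathbf{D}_{\gamma^k}=\tfrac{1}{\gamma^k}\id_{m_0}$, I would bound each diagonal block from below and control the cross terms. The key algebraic facts are $\norm{A_i A_i^\top}=\norm{A_i}^2$ and that the row sums / structure of $M$ relate to the degrees $d_i$; the assignment $\kappa_i^k = \gamma^k\delta_1/\norm{A_i}^2$ (equivalently condition \eqref{eq:schur-cond-static}) is precisely what makes the dual-block inequality go through, splitting the ``budget'' $\delta_1$ coming from the $(1,1)$ block between the $A$-part and the $M$-part.

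**The main obstacle I expect is the cross-term bookkeeping in the block expansion of $T(\cA^k)^{-1}T^\top$** — in particular, correctly accounting for how the $\tfrac{1}{2d_i+\delta_1}$ weights interact with the incidence structure of $M$ so that the $\blambda$-block of the Schur complement, namely $\tfrac{1}{\gamma^k}\id_{m_0} - \tfrac{1}{\gamma^k}M\,\diag([\tfrac{1}{2d_i+\delta_1}\id_n])\,M^\top$, stays positive semidefinite after the $A$-contribution is also subtracted. Rather than diagonalizing, I would instead argue by a direct quadratic-form estimate: for arbitrary $\by=[\btheta^\top~\blambda^\top]^\top$, lower-bound $\by^\top(\mathbf{D}_{\kappa^k,\gamma^k} - (\eta^k)^2 T(\cA^k)^{-1}T^\top)\by$ by grouping terms node-by-node and edge-by-edge, using $\fprod{A_i^\top\theta_i, \cdot}$ Cauchy–Schwarz bounds together with $\kappa_i^k\norm{A_i}^2\le\gamma^k\delta_1$ and the elementary inequality $2d_i+\delta_1\ge \delta_1$. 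This converts the matrix inequality into a sum of manifestly nonnegative scalar terms. Once the Schur complement is shown to be $\succeq\mathbf{0}$, the conclusion $\mathbf{\bar{Q}}_k\succeq\mathbf{0}$ is immediate from the Schur complement lemma, completing the proof.
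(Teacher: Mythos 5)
Your proposal is correct, but it pivots on the opposite block from the paper and therefore ends up proving a different (equivalent) reduced inequality. The paper eliminates the \emph{dual} blocks: first a Schur complement with respect to $\bD_{\gamma^k}\succ 0$, which converts the $M$-part of $T$ into the term $\gamma^k(\eta^k)^2M^\top M=\gamma^k(\eta^k)^2\,\Omega\otimes\id_n$, then a second Schur complement with respect to $\bD_{\kappa^k}\succ 0$, reducing everything to the single primal-space condition $\tfrac{1}{(\eta^k)^2}\cA^k-\gamma^k\,\Omega\otimes\id_n-A^\top\bD_{\kappa^k}^{-1}A\succeq 0$; this is settled by the Laplacian bound $\Omega\preceq 2\diag([d_i]_{i\in\cN})$ (diagonal dominance of $\diag([d_i]_{i\in\cN})+E$) together with $\norm{A_i^\top A_i}=\norm{A_i}^2$, because the remaining matrix is block diagonal over nodes and there are no cross terms to control. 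You instead pivot on $\cA^k\succ 0$ and must verify $\bD_{\kappa^k,\gamma^k}\succeq\tfrac{1}{\gamma^k}\,T\,\diag([\tfrac{1}{2d_i+\delta_1}\id_n]_{i\in\cN})\,T^\top$, which is \emph{not} block diagonal, so the $A$--$M$ cross term is unavoidable; your closing quadratic-form estimate does handle it. Concretely, $\by^\top T\diag(\cdot)T^\top\by=\sum_{i\in\cN}\tfrac{1}{2d_i+\delta_1}\|A_i^\top\theta_i+m_i\|^2$ where $m_i$ is the signed sum of the $d_i$ edge variables $\lambda_{ij}$ incident to node $i$; Young's inequality with weight $2d_i/\delta_1$ splits each summand into $\tfrac{1}{\delta_1}\norm{A_i^\top\theta_i}^2+\tfrac{1}{2d_i}\norm{m_i}^2$, Cauchy--Schwarz over the $d_i$ edge terms plus the fact that each edge is counted at both endpoints recovers $\norm{\blambda}^2$, and \eqref{eq:schur-cond-static} absorbs the $\theta_i$ terms into $\bD_{\kappa^k}$, so the condition you land on is exactly the paper's. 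The paper's order of elimination buys a cross-term-free, block-diagonal final inequality at the price of invoking the Laplacian domination $\Omega\preceq 2\diag([d_i]_{i\in\cN})$; your order avoids that matrix inequality but hides the same combinatorics in the edge-counting step of the quadratic-form bound.
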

\begin{proof}
Let $\mathbf{D}_{\kappa^k,\gamma^k}$ be as in Definition~\ref{def:bregman-s}. Since $\mathbf{D}_{\gamma^k}\succ 0$, Schur complement condition implies that $\bar{\mathbf{Q}}_k\succeq 0$ if and only if
{\small
\begin{align}\label{eq:schur-cond-2-static}
\begin{bmatrix}\cA^k & -\eta^k A^\top\\ -\eta^k A & \mathbf{D}_{\kappa^k} \end{bmatrix}
-\gamma^k(\eta^k)^2 \begin{bmatrix} M^\top M & 0 \\ 0 & 0\end{bmatrix} \succeq 0.
\end{align}}%
Moreover, since $\mathbf{D}_{\kappa^k}\succ 0$, again using Schur complement and the fact that $ M^\top M=\Omega\otimes\id_n$, one can conclude that \eqref{eq:schur-cond-2-static} holds if and only if $\frac{1}{(\eta^k)^2}\cA^k-\gamma^k\Omega\otimes\id_n- A^\top \mathbf{D}_{\kappa^k}^{-1} A\succeq 0$. Moreover, by definition $\Omega=\diag([d_i]_{i\in\mathcal{N}})-E$, where $E_{ii}=0$ for all $i\in \mathcal{N}$ and $E_{ij}=E_{ji}=1$ if $(i,j)\in\mathcal{E}$ or $(j,i)\in \mathcal{E}$. Note that $\diag([d_i]_{i\in\mathcal{N}})+E \succeq 0$ since it is diagonally dominant. Therefore, $\Omega \preceq 2\diag([d_i]_{i\in\mathcal{N}})$. Hence, one can conclude that \eqref{eq:schur-cond-2-static} holds if $\frac{1}{(\eta^k)^2}\cA^k-2\gamma^k\diag([d_i\id_n]_{i\in\cN})- A^\top \mathbf{D}_{\kappa^k}^{-1} A\succeq 0$. This condition holds if \eqref{eq:schur-cond-static} is true.
\end{proof}
\begin{lemma}\label{lem:stepsize-s}
{Let $\bD_{\kappa^k,\gamma^k}$ be as given in Definition~\ref{def:bregman-s}, and ${\bD}_{\tau^k}$, $\widetilde{\bD}_{\tau^k}$, $\bar{\bD}_{\tau^k}$ and $\mathbf{\bar{Q}}_k$ be as in Definition~\ref{def:parameters} for $\alpha\geq 0$ chosen according to Lemma \ref{lem:restricted-convex-static} and Remark~\ref{rem:g}, and $\mu\in(0,~\max\{\ubar{\mu},~\mu_\alpha\}]$.}
Suppose $\{\tau^k\},~\{\eta^k\},~\{\gamma^k\}\subset\reals_{++}$, $\{\kappa_i^k\}_{k\geq 0}\subset\reals_{++}$ for $i\in\mathcal{N}$ are chosen as in DPDA diplayed in Fig.~\ref{alg:PDS}, then the following relations hold for all $k\geq 0$:
{\small
\begin{subequations}
\label{eq:cond-s}
\begin{align}
&\mathbf{\bar{Q}}_k=\begin{bmatrix}
\cA^k &-\eta^k T^\top\\
-\eta^k T & \mathbf{D}_{\kappa^k,\gamma^k}
\end{bmatrix}\succeq 0,\label{cond-0-s} \\
&{\gamma^k}{\bD}_{\tau^k}\succeq \gamma^{k+1}\widetilde{\bD}_{\tau^{k+1}}, \label{cond-1-s}\\
& \gamma^k\bD_{\kappa^k}\succeq \gamma^{k+1}\bD_{\kappa^{k+1}},\label{cond-2-s}\\
& \gamma^k\bD_{\gamma^k}\succeq \gamma^{k+1}\bD_{\gamma^{k+1}}, \label{cond-3-s}\\
& \gamma^k= \gamma^{k+1}\eta^{k+1},\label{cond-4-s}\\
& \gamma^k\bar{\bD}_{\tau^k}\succeq \gamma^{k+1}\cA^{k+1}\label{cond-5-s}.
\end{align}
\end{subequations}}%
{Moreover, $\eta^k\in(0,1)$, $0<\frac{1}{\tilde{\tau}^k}<\frac{1}{\tau^k}=\cO(k)$, and $0<\gamma^k=\cO(k)$.}
\end{lemma}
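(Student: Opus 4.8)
The plan is to verify the six matrix relations \eqref{cond-0-s}--\eqref{cond-5-s} in turn, reading off consequences of the parameter updates of Algorithm DPDA in Fig.~\ref{alg:PDS}, and then establish the scalar bounds on $\eta^k,\tau^k,\ttau^k,\gamma^k$. The backbone is a short list of identities that follow immediately from Steps~4--5: \textbf{(i)} $\gamma^{k+1}\eta^{k+1}=\gamma^k$; \textbf{(ii)} $\ttau^{k+1}=\eta^{k+1}\ttau^k$ together with $\gamma^{k+1}=\gamma^k/\eta^{k+1}$, hence $\gamma^{k+1}\ttau^{k+1}=\gamma^k\ttau^k$, so $\gamma^k\ttau^k\equiv\gamma^0\ttau^0$ and $\gamma^k=\gamma^0\ttau^0/\ttau^k$; \textbf{(iii)} $\tfrac1{\tau^k}-\mu=\tfrac1{\ttau^k}$ for every $k\ge0$ (from $\tau^{k+1}=(\tfrac1{\ttau^{k+1}}+\mu)^{-1}$ and $\ttau^0=(\tfrac1{\tau^0}-\mu)^{-1}$); and \textbf{(iv)} $\kappa_i^k\norm{A_i}^2=\gamma^k\delta_1$ for all $i\in\cN,\ k\ge0$. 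From (iv), $\gamma^k\bD_{\kappa^k}=\diag([\tfrac{\norm{A_i}^2}{\delta_1}\id_n]_{i\in\cN})$ and $\gamma^k\bD_{\gamma^k}=\id_{m_0}$ are constant in $k$, so \eqref{cond-2-s} and \eqref{cond-3-s} hold with equality; \eqref{cond-4-s} is exactly (i); and, unpacking the matrices in Definition~\ref{def:parameters} and applying (ii)--(iii), \eqref{cond-1-s} reduces to $(\eta^{k+1})^2(1+\mu\ttau^k)\ge1$, which holds with equality by the definition of $\eta^{k+1}$ in Step~4.

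Before handling \eqref{cond-0-s} and \eqref{cond-5-s} I will record the elementary fact $\mu\le\mu_g\le\max_{i\in\cN}L_i$: when $\ubar\mu>0$ we have $\mu_g=\ubar\mu=\min_i\mu_i\le\min_i L_i$ since the strong-convexity modulus never exceeds the Lipschitz constant of the gradient; when $\ubar\mu=0$, Remark~\ref{rem:g} and Lemma~\ref{lem:restricted-convex-static} give $\mu_g=\mu_\alpha<\bar\mu/\abs{\cN}$, and $\bar\mu\le\sum_i L_i\le\abs{\cN}\max_i L_i$ by the same fact applied to $\bar f$. In particular $\mu<\tfrac1{\tau^0}=\max_i(L_i+\delta_2+2\alpha d_i)$, so $\ttau^0>0$, and an immediate induction through Step~4 gives $\ttau^k>0$ and hence $\eta^{k+1}=(1+\mu\ttau^k)^{-1/2}\in(0,1)$ for all $k\ge0$ (with $\eta^0=0$); the same computation shows $\{\tfrac1{\ttau^k}\}$ is strictly increasing.

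Condition \eqref{cond-0-s}: for $k=0$, $\eta^0=0$ forces $\cA^0=\mathbf{0}$, so $\mathbf{\bar{Q}}_0=\diag(\mathbf{0},\ \bD_{\kappa^0,\gamma^0})\succeq0$ because $\bD_{\kappa^0,\gamma^0}\succ0$; for $k\ge1$, $\eta^k>0$ and $\kappa_i^k\norm{A_i}^2/\gamma^k=\delta_1$, so \eqref{eq:schur-cond-static} holds and Lemma~\ref{lem:schur-s} gives $\mathbf{\bar{Q}}_k\succeq0$. Condition \eqref{cond-5-s} is the main point. All matrices there are block-diagonal, and using $(\eta^{k+1}\gamma^{k+1})^2=(\gamma^k)^2$ from (i) the relation is equivalent to the scalar inequalities $\tfrac1{\tau^k}-(L_i+2\alpha d_i)\ge\gamma^k(2d_i+\delta_1)$ for all $i\in\cN$ and $k\ge0$. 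Using (ii)--(iii) I rewrite the left-minus-right side as $\phi_i(k):=\tfrac1{\ttau^k}\big(1-\gamma^0\ttau^0(2d_i+\delta_1)\big)+\mu-(L_i+2\alpha d_i)$. The coefficient is nonnegative: $\gamma^0(2d_i+\delta_1)\le\delta_2$ by the definition of $\gamma^0$, hence $\gamma^0\ttau^0(2d_i+\delta_1)\le\delta_2\ttau^0=\delta_2/(\tfrac1{\tau^0}-\mu)\le1$, since $\tfrac1{\tau^0}-\mu=\delta_2+\max_i(L_i+2\alpha d_i)-\mu\ge\delta_2$ by the bound $\mu\le\max_i L_i$. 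Therefore $\phi_i$ is non-decreasing in $k$, and it suffices to check $k=0$: $\phi_i(0)=\tfrac1{\tau^0}-\gamma^0(2d_i+\delta_1)-(L_i+2\alpha d_i)\ge(L_i+\delta_2+2\alpha d_i)-\delta_2-(L_i+2\alpha d_i)=0$, using $\tfrac1{\tau^0}\ge L_i+\delta_2+2\alpha d_i$ and $\gamma^0(2d_i+\delta_1)\le\delta_2$.

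Finally, the growth rates: $0<\tfrac1{\ttau^k}<\tfrac1{\tau^k}$ since $\tfrac1{\tau^k}=\tfrac1{\ttau^k}+\mu$ and $\mu>0$; and with $a_k:=\tfrac1{\ttau^k}$, Step~4 yields $a_{k+1}^2=a_k^2+\mu a_k$, so $a_{k+1}-a_k=\mu/(1+\sqrt{1+\mu/a_k})\le\tfrac\mu2$, and feeding this back inductively gives $a_k\le a_0+\tfrac\mu2 k=\cO(k)$, hence $\tfrac1{\tau^k}=\cO(k)$ and $\gamma^k=\gamma^0\ttau^0\,a_k=\cO(k)$ (the matching lower bound $a_{k+1}-a_k\ge\mu/(1+\sqrt{1+\mu/a_0})>0$ also gives $\gamma^k=\Theta(k)$, needed later for $N_K=\Theta(K^2)$). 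The only genuinely delicate step is \eqref{cond-5-s}: the rest is bookkeeping, but there one must see that the prescribed initialization leaves precisely enough slack ($\delta_2$) for the gap $\tfrac1{\tau^k}-(L_i+2\alpha d_i)-\gamma^k(2d_i+\delta_1)$ --- which a priori could be ruined as $\gamma^k\uparrow\infty$ --- to be monotone in $k$, hence controlled by its value at $k=0$.
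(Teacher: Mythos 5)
Your proposal is correct and follows essentially the same route as the paper's (very terse) proof: invoke Lemma~\ref{lem:schur-s} for \eqref{cond-0-s}, observe that \eqref{cond-1-s}--\eqref{cond-4-s} hold with equality from the update rules, and establish \eqref{cond-5-s} by a monotonicity/induction argument built on the invariant $\gamma^k\tilde{\tau}^k=\gamma^0\tilde{\tau}^0$ and the monotonicity of $\tilde{\tau}^k$. You merely supply the details the paper labels ``trivial to check,'' notably the bound $\mu\leq\max_{i\in\cN}L_i$ guaranteeing $\tilde{\tau}^0>0$ and $\delta_2\tilde{\tau}^0\leq 1$, the separate treatment of $\mathbf{\bar{Q}}_0$ when $\eta^0=0$, and the explicit $\cO(k)$ growth computation.
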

\begin{proof}
It is trivial to check that the parameter sequence constructed in Fig.~\ref{alg:PDS} {satisfies \eqref{eq:cond-s}}. Indeed, Lemma~\ref{lem:schur-s} shows that \eqref{cond-0-s} is true since $\kappa_i^k$ for $i\in\cN$ and $\gamma^k$ as chosen in Fig.~\ref{alg:PDS} satisfy \eqref{eq:schur-cond-static} for all $k\geq 0$. This specific choice of parameters satisfy \eqref{cond-1-s}, \eqref{cond-2-s}, \eqref{cond-3-s}, and \eqref{cond-4-s} with equality. Moreover, one can use induction to show \eqref{cond-5-s} using the relations $\tilde{\tau}^k>\tau^k$, $\tilde{\tau}^k>\tilde{\tau}^{k+1}$, and $\gamma^k\tilde{\tau}^k=\gamma^{k+1}\tilde{\tau}^{k+1}$ for all $k\geq 0$.
\end{proof}
\begin{lemma}
\label{lem:pda-lemma}
For any 
$\by\in\cY$, the iterate sequence $\{\bx^k,\by^k\}_{k\geq 1}$ generated using Algorithm DPDA as in Fig.~\ref{alg:PDS}, {where $\by^k=[{\btheta^k}^\top {\blambda^k}^\top]^\top$,} satisfies for all $k\geq 0$,
{\small
\begin{eqnarray}\label{eq:pda-lemma}
\lefteqn{\mathcal{L}({\bf x}^{k+1},\by)-\mathcal{L}({\bf x}^*,\by^{k+1})\leq}\\
& & \bigg[\frac{1}{2}\norm{\bx^*-\bx^{k}}_{\widetilde{\bD}_{\tau^k}}^2+\frac{1}{2}\norm{\by-\by^k}_{\bD_{\kappa^k,\gamma^k}}^2-{\eta^k}\fprod{ T(\bx^k-\bx^{k-1}),~\by-\by^k} +\frac{1}{2}\norm{\bx^k-\bx^{k-1}}^2_{\cA^k}\bigg] \nonumber \\
& & \mbox{} - \bigg[\frac{1}{2}\norm{\bx^*-\bx^{k+1}}_{{\bD}_{\tau^k}}^2+\frac{1}{2}\norm{\by-\by^{k+1}}_{\bD_{\kappa^k,\gamma^k}}^2-\fprod{T(\bx^{k+1}-\bx^{k}),~\by-\by^{k+1}} +\frac{1}{2}\norm{\bx^{k+1}-\bx^k}^2_{\bar{\bD}_{\tau^k}}\bigg]. \nonumber
\end{eqnarray}}%
\end{lemma}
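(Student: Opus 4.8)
The plan is to carry out the standard one-iteration estimate of the Chambolle--Pock PDA, specialized to the quadratic proximal/Bregman terms $D_k(\by,\bar{\by})=\tfrac12\|\by-\bar{\by}\|_{\bD_{\kappa^k,\gamma^k}}^2$ and $\tfrac1{2\tau^k}\|\cdot\|^2$ of DPDA, while keeping careful track of the strong-convexity and Lipschitz constants so that the matrices $\widetilde{\bD}_{\tau^k}$, $\bD_{\tau^k}$, $\bar{\bD}_{\tau^k}$ and $\cA^k$ of Definition~\ref{def:parameters} emerge. The starting point is the elementary prox inequality: if $u^+=\argmin_u\{\psi(u)+\tfrac12\|u-u^0\|_Q^2\}$ with $\psi$ convex and $Q\succ0$, then $\psi(u^+)-\psi(u)\le\tfrac12\|u-u^0\|_Q^2-\tfrac12\|u-u^+\|_Q^2-\tfrac12\|u^+-u^0\|_Q^2$ for all $u$. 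Writing $\bar{\bx}^k\triangleq\bx^k+\eta^k(\bx^k-\bx^{k-1})$, I would apply this to the $\by$-step \eqref{eq:pock-pd-theta-s}--\eqref{eq:pock-pd-lambda-s}, i.e.\ to $\psi(\by)=h(\by)-\fprod{T\bar{\bx}^k,\by}$ with $Q=\bD_{\kappa^k,\gamma^k}$ and $u^0=\by^k$, and to the $\bx$-step of \eqref{eq:pock-pd}, i.e.\ to $\psi(\bx)=\rho(\bx)+\fprod{\grad g(\bx^k),\bx}+\fprod{T\bx,\by^{k+1}}$ with $Q=\tfrac1{\tau^k}\id$, $u^0=\bx^k$, evaluated at $u=\bx^*$. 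This produces two inequalities and should be routine.

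The next step is to re-linearize $g$ in the $\bx$-step inequality. Strong convexity of $g$ (Remark~\ref{rem:g}: modulus $\mu_g\ge\mu$) yields $\fprod{\grad g(\bx^k),\bx^k-\bx^*}\ge g(\bx^k)-g(\bx^*)+\tfrac{\mu}{2}\|\bx^*-\bx^k\|^2$, while the block-separable descent lemma for $g=\sum_{i\in\cN}f_i(x_i)+\tfrac{\alpha}{2}\|\bx\|_{\Omega\otimes\id_n}^2$ gives $\fprod{\grad g(\bx^k),\bx^{k+1}-\bx^k}\ge g(\bx^{k+1})-g(\bx^k)-\tfrac12\|\bx^{k+1}-\bx^k\|_{\diag([(L_i+2\alpha d_i)\id_n]_{i\in\cN})}^2$, the $2\alpha d_i$ stemming from $\Omega\preceq2\,\diag([d_i]_{i\in\cN})$ exactly as in the proof of Lemma~\ref{lem:schur-s}. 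Replacing $\fprod{\grad g(\bx^k),\bx^{k+1}-\bx^*}$ by the sum of these two bounds and using $\Phi=\rho+g$ and $\cL(\bx,\by)=\Phi(\bx)+\fprod{T\bx,\by}-h(\by)$ converts the $\bx$-step inequality into exactly
\[
\Phi(\bx^{k+1})-\Phi(\bx^*)+\fprod{T(\bx^{k+1}-\bx^*),\by^{k+1}}\le\tfrac12\|\bx^*-\bx^k\|_{\widetilde{\bD}_{\tau^k}}^2-\tfrac12\|\bx^*-\bx^{k+1}\|_{\bD_{\tau^k}}^2-\tfrac12\|\bx^{k+1}-\bx^k\|_{\bar{\bD}_{\tau^k}}^2 .
\]

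Finally I would add this to the $\by$-step inequality. After expanding $\cL$, the two left-hand sides collapse to $\cL(\bx^{k+1},\by)-\cL(\bx^*,\by^{k+1})$ together with a bilinear remainder; moving the remainder to the right and substituting $\bar{\bx}^k=\bx^k+\eta^k(\bx^k-\bx^{k-1})$ produces the three terms $\fprod{T(\bx^{k+1}-\bx^k),\by-\by^{k+1}}$, $-\eta^k\fprod{T(\bx^k-\bx^{k-1}),\by-\by^k}$ and $-\eta^k\fprod{T(\bx^k-\bx^{k-1}),\by^k-\by^{k+1}}$. The first two are precisely the extrapolation terms in the ``$k+1$'' and ``$k$'' brackets of \eqref{eq:pda-lemma}; the crux is to absorb the third. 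Since simultaneously flipping the sign of the off-diagonal block of a symmetric block matrix is a congruence transformation, condition \eqref{cond-0-s}, $\mathbf{\bar{Q}}_k\succeq0$, is equivalent to $\bigl[\begin{smallmatrix}\cA^k & \eta^k T^\top\\ \eta^k T & \bD_{\kappa^k,\gamma^k}\end{smallmatrix}\bigr]\succeq0$; evaluating the associated quadratic form at the pair $(\bx^k-\bx^{k-1},\,\by^k-\by^{k+1})$ yields $-\eta^k\fprod{T(\bx^k-\bx^{k-1}),\by^k-\by^{k+1}}\le\tfrac12\|\bx^k-\bx^{k-1}\|_{\cA^k}^2+\tfrac12\|\by^k-\by^{k+1}\|_{\bD_{\kappa^k,\gamma^k}}^2$. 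The second quadratic here cancels the residual $-\tfrac12\|\by^{k+1}-\by^k\|_{\bD_{\kappa^k,\gamma^k}}^2$ left over from the $\by$-step prox inequality, and the first is exactly the $\tfrac12\|\bx^k-\bx^{k-1}\|_{\cA^k}^2$ appearing in the ``$k$'' bracket; collecting all terms gives \eqref{eq:pda-lemma}. I expect the main obstacle to be the disciplined bookkeeping of the several bilinear terms and sign conventions when invoking $\mathbf{\bar{Q}}_k\succeq0$, together with confirming that the three residual quadratic forms coincide with $\widetilde{\bD}_{\tau^k}$, $\bD_{\tau^k}$, $\bar{\bD}_{\tau^k}$ of Definition~\ref{def:parameters}; the only structural inputs beyond convex calculus are the block-Lipschitz descent bound via $\Omega\preceq2\,\diag([d_i]_{i\in\cN})$ and the semidefiniteness \eqref{cond-0-s}.
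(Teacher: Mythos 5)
Your proposal is correct and follows essentially the same route as the paper's proof: the two prox inequalities for the $\by$- and $\bx$-steps, the re-linearization of $g$ via strong convexity plus the descent bound with constants $L_i+2\alpha d_i$ from $\Omega\preceq 2\,\diag([d_i]_{i\in\cN})$ (yielding exactly \eqref{eq:lemma-x-s} and \eqref{eq:lemma-y-s}), the three-way split of the bilinear remainder, and the absorption of the cross term via $\mathbf{\bar{Q}}_k\succeq 0$ as in \eqref{eq:Q-ineq-s}. The only cosmetic difference is your congruence-transformation remark for the sign of the off-diagonal block, which the paper handles implicitly by writing the cross term as $\eta^k\fprod{T(\bx^k-\bx^{k-1}),\by^{k+1}-\by^k}$.
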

\begin{proof}
Note that {$\bx$}-subproblem in \eqref{eq:PDA-x} is separable in local decisions $\{x_i\}_{i\in\cN}$; and for each $i\in\cN$ the local subproblem over $x_i$ is strongly convex with constant $1/\tau^k$. Indeed, let $\bp^k=T^\top \by^k$ and define $\{p_i^k\}_{i\in\cN}$ such that $p_i^k$ is the subvector corresponding to the components of $x_i$, i.e., $\bp^k=[p_i^k]_{i\in\cN}$. In addition, $\nabla g(\bx^k)=[\nabla g_i({\bx^k})]_{i\in\cN}$ where $\nabla g_i({\bx^k})\triangleq\nabla f_i(x^k_i)+\left[ (\Omega\otimes\id_n)\bx^k\right]_i$, where
{$\left[ (\Omega\otimes\id_n)\bx^k\right]_i=\sum_{j\in\cN_i}(x_i^k-x_j^k)$}. Thus, for all $i\in\cN$
{\small
\begin{equation}
\label{eq:x-subproblem}
x_i^{k+1}=\argmin_{x_i}\rho_i(x_i)
+\fprod{\grad g_i({\bx^k}),~x_i-x_i^k}+\fprod{p_i^{k+1},x_i}+\frac{1}{2\tau^k}\norm{x_i-x_i^k}^2.
\end{equation}}%
Therefore, {for $i\in\cN$}, the strong convexity of the objective in local subproblem \eqref{eq:x-subproblem} implies
{\small
\begin{eqnarray}\label{eq:rho-s}
\lefteqn{\rho_i(x^*)+\fprod{\grad g_i({\bx^k}),~x^*-x_i^k}+\fprod{p_i^{k+1},x^*}+\frac{1}{2\tau^k}\norm{x^*-x_i^k}^2\geq}\nonumber\\
& & \rho_i(x_i^{k+1})+\fprod{\grad g_i({\bx^k}),~x_i^{k+1}-x_i^k}+\fprod{p_i^{k+1},x_i^{k+1}}+\frac{1}{2\tau^k}\norm{x_i^{k+1}-x_i^k}^2+\frac{1}{2\tau^k}\norm{x^*-x_i^{k+1}}^2.
\end{eqnarray}}%
Now, we show that {$\grad g$} is Lipschitz continuous. First, recall that as we discussed in the proof of Lemma~\ref{lem:schur-s}, we have $\Omega \preceq 2\diag([d_i]_{i\in\mathcal{N}})$. Second, since $\norm{\bx}^2_{\Omega\otimes\id_n}$ is a quadratic term, for any $\bar{\bx}$ we have
{\small
\begin{align}\label{eq:quad-lip}
\frac{1}{2}\norm{\bx}^2_{\Omega\otimes\id_n}&=\frac{1}{2}\norm{\bar{\bx}}^2_{\Omega\otimes\id_n}+\fprod{(\Omega\otimes\id_n)\bar{\bx},~\bx-\bar{\bx}}+ \frac{1}{2}(\bx-\bar{\bx})^\top(\Omega\otimes\id_n)(\bx-\bar{\bx})  \nonumber\\
&\leq \frac{1}{2}\norm{\bar{\bx}}^2_{\Omega\otimes\id_n}+{\fprod{(\Omega\otimes\id_n)\bar{\bx},~\bx-\bar{\bx}}}+ (\bx-\bar{\bx})^\top \diag([d_i\id_n]_{i\in\cN})(\bx-\bar{\bx}).
\end{align}
}%
In addition, since each $f_i$ has a Lipschitz continuous gradient, we have for any $\bx$ and $\bar{\bx}$ that
{\small
\begin{align}\label{eq:f-lip-s}
f(\bx)\leq f(\bar{\bx})+\fprod{\nabla f(\bar{\bx}),~{\bx-\bar{\bx}}}+\sum_{i\in\cN}\frac{L_i}{2}\norm{x_i-\bar{x}_i}^2.
\end{align}
}%
Let $\bL_g\triangleq \diag([(L_i+2d_i\alpha){\id_n}]_{i\in\cN})\in\sy^{n|\cN|}$. Summing \eqref{eq:quad-lip} and \eqref{eq:f-lip-s}, for any $\bx$ and $\bar{\bx}$, we have
{\small
\begin{align}\label{eq:g-lip}
g(\bx)&\leq g(\bar{\bx})+\fprod{\nabla g(\bar{\bx}),~\bx-\bar{\bx}}+\sum_{i\in\cN}\frac{L_i+2d_i\alpha}{2}\norm{x_i-\bar{x}_i}^2 \nonumber \\
&=g(\bar{\bx})+\fprod{\nabla g(\bar{\bx}),~\bx-\bar{\bx}}+{\frac{1}{2}}\norm{\bx-{\bar{\bx}}}^2_{\bL_g}.
\end{align}
}%
It follows from strong convexity of $\bar{f}$ that choosing $\alpha\geq 0$ according to Lemma \ref{lem:restricted-convex-static} and Remark~\ref{rem:g}, we conclude that for any $\mu\in(0,~\max\{\ubar{\mu},~\mu_\alpha\})$ we have
{\small
\begin{align}\label{eq:g-strong-s}
g(\bx^*)&\geq g(\bx^{k})+\fprod{\grad g(\bx^{k}),~\bx^*-\bx^{k}} +\frac{{\mu}}{2}\norm{\bx^*-\bx^{k}}^2 \nonumber \\
&\geq g(\bx^{k+1})+\fprod{\grad g(\bx^{k}),~\bx^*-\bx^{k+1}} +\frac{{\mu}}{2}\norm{\bx^*-\bx^{k}}^2-\frac{1}{2}\norm{\bx^{k+1}-\bx^k}^2_{\bL_g}.
\end{align}}%
Since $\sum_{i\in\cN}\fprod{p_i^{k+1},~x^*}=\fprod{T\bx^*,~\by^{k+1}}$, 
{first summing \eqref{eq:rho-s} over $i\in\cN$, next summing the resulting inequality with \eqref{eq:g-strong-s}, and then adding $g(\bx^k)$ to both hand-sides}, we get
{\small
\begin{eqnarray}
\lefteqn{\Phi(\bx^*)+\frac{1}{2}\norm{\bx^*-\bx^{k}}_{{\mathbf{\widetilde{D}}_{\tau^k}}}^2\geq}\label{eq:lemma-x-s}\\
& & \Phi(\bx^{k+1})+\fprod{T(\bx^{k+1}-\bx^*),~\by^{k+1}}+\frac{1}{2}\norm{\bx^*-\bx^{k+1}}_{{\bD}_{\tau^k}}^2+\tfrac{1}{2}\norm{\bx^{k+1}-\bx^k}^2_{\mathbf{\bar{D}}_{\tau^k}}.
\nonumber
\end{eqnarray}}%
Similarly, let $\bq^k\triangleq T(\bx^{k}+\eta^k(\bx^k-\bx^{k-1}))$ and define $q_0^k\in\reals^{m_0}$ and $q_i^k\in\reals^{m_i}$ for $i\in\cN$ such that $q_0^k$ is the subvector corresponding to the components of $\blambda$, and $q_i^k$ is the subvector corresponding to the components of $\theta_i$ for $i\in\cN$, i.e., {$\bq^k=[{q_1^k}^\top \ldots {q_N^k}^\top {q_0^k}^\top]^\top$}. Thus, from \eqref{eq:pock-pd-theta-s} and \eqref{eq:pock-pd-lambda-s}, we have
{\small
\begin{align*}
\blambda^{k+1}&=\argmin_{\blambda} -\fprod{q_0^k,\blambda}+\frac{1}{2\gamma^k}\norm{\blambda-\blambda^k}^2,\\
\theta_i^{k+1}&=\argmin_{\theta_i}\sigma_{\cK_i}(\theta_i)-\fprod{q_i^k-b_i,\theta_i}+\frac{1}{2\kappa_i^k}\norm{\theta_i-\theta_i^k}^2,\quad \forall\ i\in\cN.
\end{align*}}%
Using the strong convexity of these subproblems, {for any $\by=[\btheta^\top,~\blambda^\top]^\top$}, we get
{\small
\begin{align*}
-\fprod{q_0^k,\blambda}+\frac{1}{2\gamma^k}\norm{\blambda-\blambda^k}^2
&\geq -\fprod{q_0^k,\blambda^{k+1}}+\frac{1}{2\gamma^k}\norm{\blambda^{k+1}-\blambda^k}^2+\frac{1}{2\gamma^k}\norm{\blambda-\blambda^{k+1}}^2,\\
\sigma_{\cK_i}(\theta_i)-\fprod{q_i^k-b_i,\theta_i}+\frac{1}{2\kappa_i^k}\norm{\theta_i-\theta_i^k}^2
&\geq \sigma_{\cK_i}(\theta_i^{k+1})-\fprod{q_i^k-b_i,\theta_i^{k+1}}+\frac{1}{2\kappa_i^k}\norm{\theta_i^{k+1}-\theta_i^k}^2+\frac{1}{2\kappa_i^k}\norm{\theta_i-\theta_i^{k+1}}^2.
\end{align*}}%
Since $\fprod{q_0^k,~\blambda}+\sum_{i\in\cN}\fprod{q_i^k,~\theta_i}=\fprod{T(\bx^{k}+{\eta}^k(\bx^k-\bx^{k-1})),~\by}$ for all $\by$, summing the second inequality over $i\in\cN$ and then summing the resulting inequality with the first one, we get
{\small
\begin{eqnarray}
\lefteqn{h(\by)+\frac{1}{2}\norm{\by-\by^k}_{\bD_{\kappa^k,\gamma^k}}^2\geq}\label{eq:lemma-y-s}\\
& & h(\by^{k+1})-\fprod{T(\bx^{k}+\eta^k(\bx^k-\bx^{k-1})),~\by^{k+1}-\by}+\frac{1}{2}\norm{\by-\by^{k+1}}_{\bD_{\kappa^k,\gamma^k}}^2+ \frac{1}{2}\norm{\by^{k+1}-\by^k}_{\bD_{\kappa^k,\gamma^k}}^2.
\nonumber
\end{eqnarray}}%
Next,  summing \eqref{eq:lemma-x-s}, \eqref{eq:lemma-y-s}, and rearranging the terms, we obtain
{\small
\begin{eqnarray}\label{eq:pda-ineq}
\lefteqn{\mathcal{L}({\bf x}^{k+1},\by)-\mathcal{L}({\bf x}^*,\by^{k+1})\leq}\\
& & \eta^k\fprod{ T(\bx^k-\bx^{k-1}),\by^{k+1}-\by}-\frac{1}{2}\norm{\by^{k+1}-\by^k}_{\bD_{\kappa^k,\gamma^k}}^2+\bigg[\frac{1}{2}\norm{\bx^*-\bx^{k}}_{\widetilde{\bD}_{\tau^k}}^2+\frac{1}{2}\norm{\by-\by^k}_{\bD_{\kappa^k,\gamma^k}}^2\bigg]\nonumber\\
& & \mbox{} - \bigg[\frac{1}{2}\norm{\bx^*-\bx^{k+1}}_{{\bD}_{\tau^k}}^2+\frac{1}{2}\norm{\by-\by^{k+1}}_{\bD_{\kappa^k,\gamma^k}}^2 -\fprod{T(\bx^{k+1}-\bx^k),~\by-\by^{k+1}} +\frac{1}{2}\norm{\bx^{k+1}-\bx^k}^2_{\bar{\bD}_{\tau^k}}\bigg]\nonumber
\end{eqnarray}}%
Note that we have
{\small
\begin{align}
\label{eq:pda-eq}
\eta^k\fprod{T(\bx^k-\bx^{k-1}),~\by^{k+1}-\by}=-\eta^k\fprod{T(\bx^k-\bx^{k-1}),~\by-\by^k}+\eta^k\fprod{T(\bx^k-\bx^{k-1}),~\by^{k+1}-\by^k};
\end{align}
}
moreover, using \eqref{cond-0-s}, i.e., $\bar{\bQ}_k\succeq 0$, the last term can be bounded as follows:
{\small
\begin{eqnarray}\label{eq:Q-ineq-s}
{\eta^k\fprod{T(\bx^k-\bx^{k-1}),~\by^{k+1}-\by^k}} \leq \frac{1}{2}\norm{\by^{k+1}-\by^k}_{\bD_{\kappa^k,\gamma^k}}^2+\frac{1}{2}\norm{\bx^k-\bx^{k-1}}^2_{\cA^k}
\end{eqnarray}
}%
{Then, combining \eqref{eq:pda-ineq}, \eqref{eq:pda-eq} and \eqref{eq:Q-ineq-s} gives the desired result.}
\end{proof}

\subsection{Proof of Theorem \ref{thm:static-error-bounds}}
Under Assumption~\ref{assump:saddle-point}, a saddle point $(\bx^*,\by^*)$ for $\min_{\bx\in\cX}\max_{\by\in\cY}\cL(\bx,\by)$ in~\eqref{eq:static-saddle} exists, where $\by^*=[{\btheta^*}^\top,{\blambda^*}^\top]^\top$; moreover, any saddle point $(\bx^*,\btheta^*,\blambda^*)$ satisfies that $\bx^*=\one \otimes x^*$ such that $(x^*,\btheta^*)$ is a primal-dual solution to \eqref{eq:central_problem}. Thus, $\theta_i^*\in\cK_i^\circ$ and $\mathcal{L}(\bx^*,\btheta^*,\blambda^*)=\Phi(\bx^*)$. {Recall Definition~\ref{def:problem-components-static}, since $\norm{\bx^*}^2_{\Omega\otimes\id_n}=0$, we have $g(\bx^*)=f(\bx^*)$; hence, $\Phi(\bx^*)=\varphi(\bx^*)=\sum_{i\in \mathcal{N}}\varphi_i(x^*)$. Therefore, $\mathcal{L}(\bx^*,\btheta^*,\blambda^*)=\varphi(\bx^*)$. Moreover,} note that if $(\bx^*,\btheta^*,\blambda^*)$ is a saddle point of $\cL$ such that $\blambda^*\neq\mathbf{0}$, then it trivially follows that $(\bx^*,\btheta^*,\mathbf{0})$ is another saddle point of $\mathcal{L}$.

Multiplying both sides of \eqref{eq:pda-lemma} by $\frac{\gamma^k}{\gamma^0}$ and using Lemma~\ref{lem:stepsize-s}, we get
{\small
\begin{eqnarray}\label{eq:static-saddle-rate-ineq}
\lefteqn{\frac{\gamma^k}{\gamma^0}\left[\mathcal{L}({\bf x}^{k+1},\by)-\mathcal{L}({\bf x}^*,\by^{k+1})\right]\leq}\\
& & \frac{\gamma^k}{\gamma^0}\bigg[\frac{1}{2}\norm{\bx^*-\bx^{k}}_{\widetilde{\bD}_{\tau^k}}^2+\frac{1}{2}\norm{\by-\by^k}_{\bD_{\kappa^k,\gamma^k}}^2-{\eta^k}\fprod{ T(\bx^k-\bx^{k-1}),~\by-\by^k} +\frac{1}{2}\norm{\bx^k-\bx^{k-1}}^2_{\cA^k}\bigg] \nonumber \\
& & \mbox{} - \frac{\gamma^{k+1}}{\gamma^0} \bigg[\frac{1}{2}\norm{\bx^*-\bx^{k+1}}_{\widetilde{\bD}_{\tau^k}}^2+\frac{1}{2}\norm{\by-\by^{k+1}}_{\bD_{\kappa^k,\gamma^k}}^2-\eta^{k+1}\fprod{T(\bx^{k+1}-\bx^{k}),~\by-\by^{k+1}} +\frac{1}{2}\norm{\bx^{k+1}-\bx^k}^2_{\cA^{k+1}}\bigg]. \nonumber
\end{eqnarray}}%
Next, {we sum \eqref{eq:static-saddle-rate-ineq} from $k=0$ to $K-1$; using Jensen inequality and the following facts: $\bar{\bQ}_K\succeq 0$ and $\bx^{-1}=\bx^0$,} we get
{\small
\begin{eqnarray*}
2 N_K\big(\mathcal{L}(\bar{\bx}^K,\by)-\mathcal{L}(\bx^*,\bar{\by}^K)\big) \leq \Big[ \norm{\bx^*-\bx^{0}}_{{\widetilde{\bD}}_{\tau^0}}^2+\norm{\by-\by^0}_{\bD_{\kappa^0,\gamma^0}}^2\Big] -\frac{\gamma^K}{\gamma^0}\Big[ \norm{\bx^*-\bx^{K}}_{\widetilde{\bD}_{\tau^K}}^2+\norm{\bz^K}_{\bar{\bQ}_K}^2\Big],
\end{eqnarray*}}%
where $\bz^K\triangleq[(\bx^K-\bx^{K-1})^\top~(\by-\by^K)^\top]^\top$, $N_K=\sum_{k=1}^{K}\frac{\gamma^{k-1}}{\gamma^0}$, $\bar{\bx}^{K}=N_K^{-1}\sum_{k=1}^{K}\frac{\gamma^{k-1}}{\gamma^0}\bx^k$, and $\bar{\by}^{K}=N_K^{-1}\sum_{k=1}^{K}\frac{\gamma^{k-1}}{\gamma^0}\by^k$. {Since $\norm{\bz^K}_{\bar{\bQ}_K}^2\geq 0$ and $\tilde{\tau}^k>\tau^k$ for $k\geq 0$,} we get the following bounds for all $K\geq 1$:
{\small
\begin{align}\label{eq:saddle-rate-static}
&\mathcal{L}(\bar{\bf x}^K,\btheta,\blambda)-\mathcal{L}({\bf x}^*,\bar{\btheta}^K,\bar{\blambda}^K)\leq \frac{1}{N_K}\ \Theta(\bx^*,\btheta,\blambda),\quad \frac{1}{2}\norm{\bx^{K}-\bx^*}^2\leq \frac{\tilde{\tau}^K}{\gamma^K}\ \gamma^0~\Theta(\bx^*,\btheta,\blambda),\\
&\Theta(\bx^*,\btheta,\blambda)\triangleq{1\over 2\gamma^0}\|\blambda-\blambda^0\|^2+\sum_{i\in\mathcal{N}}\bigg[{1\over 2\tau^0}\|x_i^0-x^*\|^2+{1\over 2\kappa_i^0}\|\theta_i-\theta_i^0\|^2 \bigg].\nonumber
\end{align}
}%
Under Assumption~\ref{assump:saddle-point}, one can construct a saddle point $(\bx^*,\btheta^*,\blambda^*)$ for $\cL$ in \eqref{eq:static-saddle} such that $\blambda^*=\mathbf{0}$;
hence, $\mathcal{L}(\bf x^*,\btheta^*,\blambda^*)={\varphi(\bx^*)}$ and $\theta_i^*\in\cK_i^\circ$ for $i\in\cN$. Define $\tilde{\btheta}=[\tilde{\theta}_i]_{i\in\cN}$ such that $\tilde{\theta}_i\triangleq 2\|\theta_i^*\|
\big( \|\cP_{\mathcal{K}_i^\circ}(A_i{\bar{x}_i^K}-b_i)\|\big)^{-1}~\cP_{\mathcal{K}_i^\circ}(A_i{\bar{x}_i^K}-b_i)\in\cK_i^\circ$, which implies
{
\begin{equation}
\label{eq:tilde-theta-static}
\langle A_i\bar{x}_i^K -b_i, \tilde{\theta}_i \rangle=2\|\theta_i^*\|~d_{\mathcal{K}_i}(A_i\bar{x}_i^K -b_i).
\end{equation}}%
Similarly, define $\tilde{\blambda}\triangleq M\bar{\bf x}^K/\norm{M\bar{\bf x}^K}$; hence, $\langle M\bar{\bf x}^K, \tilde{\blambda}\rangle=\norm{M\bar{\bf x}^K}$. Together with \eqref{eq:tilde-theta-static}, we get
{
\begin{equation}\label{eq:lagrange-equality-s}
\cL(\bar{\bx}^K,\tilde{\btheta},\tilde{\blambda})= \Phi(\bar{\bx}^K)+2\sum_{i\in\cN} \|\theta_i^*\|~d_{\mathcal{K}_i}(A_i\bar{\x}_i^K-b_i)+\norm{M\bar{\bf x}^K}.
\end{equation}
}%
Note that for any $i\in\cN$, $\bar{\theta}_i^K\in\cK_i^\circ$; hence, $\sigma_{\cK_i}(\bar{\theta}_i^K)=0$. In addition, since $\bar{\theta}^K_i\in\cK_i^\circ$ and $A_i x^*-b_i\in\cK_i$, we have
{
\begin{equation}\label{eq:theta-xstar-s}
\fprod{A_ix^*-b_i,~\bar{\theta}^K_i} \leq 0.
\end{equation}}
Therefore, using \eqref{eq:theta-xstar-s} and the fact that $M\bx^*=0$ we get that,
{
\begin{equation}\label{eq:lagrange-inequality-s}
\cL(\bx^*,\bar{\btheta}^K,\bar{\blambda}^K)~ {\leq}~ \varphi(\bx^*).
\end{equation}}
Thus, \eqref{eq:lagrange-equality-s}, \eqref{eq:lagrange-inequality-s} {and \eqref{eq:saddle-rate-static}} together with the definitions of $\tilde{\btheta}$, $\tilde{\blambda}$ and the fact that $\blambda^0=\mathbf{0}$ and $\btheta^0=\mathbf{0}$ imply that
{
\begin{align}\label{eq:upper-bound-static}
\Phi(\bar{\bx}^K)-{\varphi(\bx^*)}+2\sum_{i\in\cN} d_{\mathcal{K}_i}(A_i\bar{x}_i^K-b_i)\|\theta_i^*\|+\norm{M\bar{\bf x}^K} \leq  \frac{1}{N_K}~\Theta(\bx^*,\tilde{\btheta},\tilde{\blambda})=\frac{\Theta_0}{N_K}.
\end{align}}%
Since $({\bf x}^*,\btheta^*,\blambda^*)$ is a saddle-point for $\cL$ in \eqref{eq:static-saddle}, we have $\mathcal{L}(\bar{\bxi}^K,\btheta^*,\blambda^*)-\mathcal{L}({\bf x}^*,\btheta^*,\blambda^*) \geq 0$; therefore,
{
\begin{equation}\label{eq:aux-lower-static}
\Phi(\bar{\bx}^K)-\varphi({\bf x}^*)+\sum_{i\in\cN}\fprod{\theta_i^*,~A_i\bar{x}_i^K-b_i}\geq 0.
\end{equation}
}%
{Using the conic decomposition of $A_i\bar{x}_i^K-b_i$ and the fact that $\theta^*_i\in\cK_i^\circ$, we immediately get}
{
$$\langle A_i\bar{x}_i^K-b_i, \theta_i^*\rangle\leq \|\theta^*_i\|~d_{\mathcal{K}_i}(A_i\bar{x}_i^K-b_i).$$}%
Together with \eqref{eq:aux-lower-static}, we conclude that
{
\begin{equation}\label{eq:lower-bound-static}
\Phi(\bar{\bx}^K)-\varphi({\bf x}^*)+\sum_{i\in\cN}\|\theta^*_i\|~d_{\mathcal{K}_i}(A_i\bar{\xi}_i^K-b_i) \geq 0.
\end{equation}}%
Finally, combining inequalities \eqref{eq:upper-bound-static} and \eqref{eq:lower-bound-static} immediately implies the desired result. Moreover, the bound on $\norm{\bx^*-\bx^K}$ follows from \eqref{eq:saddle-rate-static}. In fact, possibly a tighter bound can be derived using $\Theta(\bx^*,\btheta^*,\blambda^*)$ for $\lambda^*=\mathbf{0}$.
\subsection{Proof of Lemma~\ref{lem:restricted-convex-general}}
Let $\bx^*=\ones_{|\cN|}\otimes x^*$, where $x^*$ is the unique optimal solution to \eqref{eq:central_problem}, and according to Assumption~\ref{assump:sconvex}, $f$ is strongly convex
with modulus $\bar{\mu}>0$. Fix some arbitrary $\alpha>\frac{4}{\bar{\mu}}\sum_{i\in\cN}L_i^2$ and $\bx\in\reals^{n|\cN|}$. Since $\cC$ is a closed convex cone, $\bx$ can be decomposed into $\bu=\cP_\cC(\bx)$ and $\bv=\cP_{\cC^\circ}(\bx)$, i.e., $\bx=\bu+\bv$ and $\norm{\bx}^2=\norm{\bu}^2+\norm{\bv}^2$. From the definition of $f_\alpha$,
{\small
\begin{align}\label{grad-g-d}
\fprod{\nabla f_\alpha(\bx)-\nabla f_\alpha(\bx^*),~\bx-\bx^*}=\fprod{\nabla f(\bx)-\nabla f(\bx^*),~\bx-\bx^*}+\alpha\fprod{\bx-\bx^*,~\bv},
\end{align}
}%
which follows from the fact that $\grad r(\bx)=\bx-\cP_\cC(\bx)$; hence $\grad r(\bx^*)=\zero$. 
Let $N\triangleq |\cN|$ and $\bar{L}\triangleq \sqrt{\frac{\sum_{i\in\cN}L_i^2}{N}}$. Since $\bx^*,\bu\in\cC$ and $f$ is convex, Lipschitz differentiable, and strongly convex, the same inequalities in \eqref{eq:fprod_bound} implies:
{\small
\begin{align}\label{grad-f-bound-d}
\fprod{\nabla f(\bx)-f(\bx^*),\bx-\bx^*} \geq \frac{\bar{\mu}}{N}\norm{\bu-\bx^*}^2-2\bar{L}\norm{\bu-\bx^*}\norm{\bv}.
\end{align}
}%
Note that $\bu-\bx^*\in\cC$; hence, $\fprod{\bu-\bx^*,\bv}=0$ since $\bv\in\cC^\circ$. Thus, $\fprod{\bx-\bx^*, \bv}=\norm{\bv}^2$; this together with  \eqref{grad-g-d} and \eqref{grad-f-bound-d}
implies that
{\small
\begin{align}\label{g-strong-convex}
\fprod{\nabla f_\alpha(\bx)-\nabla f_\alpha(\bx^*),\bx-\bx^*}\geq \frac{\bar{\mu}}{N}\norm{\bu-\bx^*}^2-2\bar{L}\norm{\bu-\bx^*}\norm{\bv}+\alpha \norm{\bv}^2.
\end{align}
}%
Next, fix some arbitrary $\omega\geq 0$. Then either \textbf{(i)} $\norm{\bv}\leq \omega \norm{\bu-\bx^*}$, or \textbf{(ii)} $\norm{\bv}\geq \omega \norm{\bu-\bx^*}$ holds. Using the same arguments to obtain \eqref{g-strong-convex-s-i}, \eqref{g-strong-convex-s-ii} and \eqref{g-strong-convex-s-result}, we can conclude that
{\small
\begin{align}\label{g-strong-convex-result}
\fprod{\nabla f_\alpha(\bx)-\nabla f_\alpha(\bx^*),~\bx-\bx^*}\geq \min\left\{\frac{\bar{\mu}}{N}-2\bar{L}\omega, ~\alpha-\frac{2\bar{L}}{\omega}\right\} \norm{\bx-\bx^*}^2.
\end{align}
}%
Since $\omega\geq 0$ is arbitrary, $f_\alpha$ is restricted strongly convex with respect to $\bx^*$ with modulus $\mu_\alpha=\max_{\omega\geq 0} \min\left\{\frac{\bar{\mu}}{N}-2\bar{L}\omega, ~\alpha-\frac{2\bar{L}}{\omega}\right\}$. Note $\mu_\alpha$ is attained for $\omega_\alpha\geq 0$ such that $\frac{\bar{\mu}}{N}-2\bar{L}\omega_\alpha= \alpha-\frac{2\bar{L}}{\omega_\alpha}$, which implies that $\omega_\alpha=\frac{\bar{\mu}/N-\alpha}{4\bar{L}} + \sqrt{\left(\frac{\bar{\mu}/N-\alpha}{4\bar{L}}\right)^2 +1}$. Moreover, $\mu_\alpha=\frac{\bar{\mu}}{N}-2\bar{L}\omega_\alpha$ is the value given in the statement of the lemma, and we have $\frac{\bar{\mu}}{N}>\mu_\alpha>0$ for any $\alpha>\frac{4}{ \bar{\mu}}\sum_{i\in\cN}L_i^2$. It is worth mentioning that $\mu_\alpha$ is a concave increasing function of $\alpha$ over $\reals_{++}$, and $\sup_{\alpha>0} \mu_\alpha = \lim_{\alpha\nearrow\infty}\mu_\alpha=\frac{\bar{\mu}}{N}$.
\subsection{Key Lemmas for the Proof of Theorem~\ref{thm:dynamic-error-bounds}}
We first define the proximal error sequences $\{\be_1^k\}_{k\geq 1}$, $\{\be_2^k\}_{k\geq 1}$, and $\{\be_3^k\}_{k\geq 1}$ which will be used for analyzing the convergence of Algorithm~DPDA-TV displayed in Fig.~\ref{alg:PDD}. For $k\geq 0$, let
{\small
\begin{align}
\be_1^{k+1} \triangleq \cP_{\Ct}\left(\bom^k\right)-\tcR^k\left(\bom^k\right),\qquad \be_2^{k+1} \triangleq \cP_\cC(\bxi^k)-\cR^k(\bxi^k),\qquad \be_3^{k+1} \triangleq \bxi^{k+1}-\bx^{k+1},  \label{eq:prox-error-seq-1}
\end{align}
}%
where $\bom^k=\tfrac{1}{\gamma^k}{\bmu}^k+{\bxi}^{k}+\eta^k(\bxi^k-{\bxi}^{k-1})$ and $\tcR^k(\bx)=\cP_{\cB}(\cR^k(\bx))$, i.e., $\tcR^k(\bx)=[\tcR_i^k(\bx)]_{i\in\cN}$ and $\tcR_i^k(\bx)=\cP_{\cB_0}(\cR_i^k(\bx))$, for $\bx\in\cX$. Thus, for $k\geq 0$, $\bmu^{k+1}=\blambda^{k+1}+\gamma^k \be_1^{k+1}$ since \eqref{eq:pock-pd-2-mu} is replaced with \eqref{eq:inexact-rule-mu}, and $\bxi^{k+1}=\bx^{k+1}+\be_3^{k+1}$ since \eqref{eq:pock-pd-2-xi} is replaced with \eqref{eq:inexact-rule-xi}. In the rest, we set 
$\bmu^0$ to $\mathbf{0}$.

The following observation will also be useful to prove error bounds for DPDA-TV iterate sequence. Note that \eqref{eq:inexact-rule-mu} implies for each $i\in\cN$,
{\small
\begin{align*}
\|\nu_i^{k+1}\| &\leq \gamma^k\|\omega_i^{k}\|+\gamma^k\|{\widetilde{\cR}}_i^k\big(\bom^k\big)\|\leq \|\nu_i^k\|+\gamma^k\big[(1+\eta^k)\|\xi_i^k\|+\eta^k\|\xi_i^{k-1}\|+2\Delta\big].
\end{align*}
}%
Thus, we trivially get the following bound on $\norm{\bmu^{k+1}}$:
{\small
\begin{equation}\label{eq:mu-bound}
\|\bmu^{k+1}\| \leq~\sum_{t=0}^k \gamma^t\bigg(2 \sqrt{N}\Delta+(1+\eta^t)\norm{\bxi^t}+\eta^t\norm{\bxi^{t-1}} \bigg).   
\end{equation}
}%
Moreover, we will also need the following relation: for any $\bmu$ and $\blambda$ we have that
{\small
\begin{equation} \label{eq:support-error}
\sigma_{\Ct}({\bmu})=\sup_{{\bf x}\in \Ct}~\langle \blambda,{\bf x} \rangle+\langle \bmu-\blambda,{\bf x} \rangle \leq \sigma_{\Ct}(\blambda)+2\sqrt{N}~\Delta~\|\bmu-\blambda\|.
\end{equation}
}%
\begin{defn}\label{definition}
Let $T=[A^\top~\id_{n|\cN|}]^\top$ for $A\triangleq\diag([A_i]_{i\in\mathcal{N}})\in\reals^{m \times n|\cN|}$. Given $\alpha,\mu,{\delta_1}>0$, and arbitrary sequences $\{\tau^k\},\{\gamma^k\}\subset\reals_{++}$, $\{\kappa_i^k\}_{k\geq 0}\subset\reals_{++}$ for $i\in\mathcal{N}$, define {$\mathbf{D}_{\tau^k}\triangleq\frac{1}{\tau^k}\diag([\id_n]_{i\in\cN})$,} $\mathbf{\widetilde{D}}_{\tau^{k}}\triangleq\diag([({1\over \tau^{k}}- \mu)\id_{n}]_{i\in \mathcal{N}})$, $\mathbf{\bar{D}}_{\tau^k}\triangleq\diag([({1\over \tau^k} -{(L_i+\alpha)})\id_{n}]_{i\in \mathcal{N}})$, and
$\mathbf{\bar{Q}}_k\triangleq\begin{bmatrix}
\cA^k &-\eta^k T^\top\\
-\eta^k T & \mathbf{D}_{\kappa^k,\gamma^k}
\end{bmatrix}$  for $k\geq 0$, where $\cA^k\triangleq(\eta^k)^2\gamma^{k}(1+{\delta_1})~\id_{n|\cN|}\succ 0$ and $\mathbf{D}_{\kappa^k,\gamma^k}$ is defined in Definition~\ref{def:bregman}.
\end{defn}
In order to prove Theorem~\ref{thm:dynamic-error-bounds}, we first prove Lemma~\ref{thm:dynamic-rate} below which help us to appropriately bound $\mathcal{L}(\bar{\bx}^K,\by)-\mathcal{L}(\bx^*,\bar{\by}^K)$ for any $\by\in\cY$ and $\norm{\bxi^K-\bx^*}$. That said to show the result in Lemma~\ref{thm:dynamic-rate}, we need to show the following two lemmas, Lemma \ref{lem:schur} and Lemma \ref{lem:stepsize}, describing a proper choice {for the primal-dual step size sequences.}
\begin{lemma}\label{lem:schur}
Given $\delta_1>0$. For any $k\geq 0$, $\mathbf{\bar{Q}}_k\succeq \mathbf{0}$ if $\eta^k>0$, and positive numbers $\{\kappa_i^k\}_{i\in\mathcal{N}}$, and $\gamma^k$ are chosen such that
{\small
\begin{align}\label{eq:schur-cond}
\frac{\kappa_i^k \norm{A_i}^2}{\gamma^k} \leq \delta_1,\quad \forall~i\in\mathcal{N}.
\end{align}
}%
\end{lemma}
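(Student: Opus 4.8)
The plan is to reproduce, in the time-varying setting, the two nested Schur-complement argument that establishes Lemma~\ref{lem:schur-s}; the only structural change is that here the second block of $T=[A^\top~\id_{n|\cN|}]^\top$ is the identity $\id_{n|\cN|}$ rather than the incidence matrix $M$, so no estimate of a graph Laplacian is needed and the computation is in fact shorter. First I would write $\mathbf{\bar{Q}}_k$ in its expanded $3\times 3$ block form induced by $T$ and $\mathbf{D}_{\kappa^k,\gamma^k}=\diag(\mathbf{D}_{\kappa^k},\mathbf{D}_{\gamma^k})$: diagonal blocks $\cA^k$, $\mathbf{D}_{\kappa^k}$, $\mathbf{D}_{\gamma^k}$ with off-diagonal couplings $-\eta^k A^\top$ between the $\bx$- and $\btheta$-blocks and $-\eta^k\id_{n|\cN|}$ between the $\bx$- and $\blambda$-blocks.

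Since $\mathbf{D}_{\gamma^k}=\tfrac{1}{\gamma^k}\id_{m_0}\succ 0$ with $m_0=n|\cN|$, the Schur complement condition with respect to the $\blambda$-block shows that $\mathbf{\bar{Q}}_k\succeq 0$ if and only if
\[
\begin{bmatrix}\cA^k-(\eta^k)^2\gamma^k\,\id_{n|\cN|} & -\eta^k A^\top\\[2pt] -\eta^k A & \mathbf{D}_{\kappa^k}\end{bmatrix}\succeq 0.
\]
Substituting $\cA^k=(\eta^k)^2\gamma^k(1+\delta_1)\,\id_{n|\cN|}$, the top-left block equals $(\eta^k)^2\gamma^k\delta_1\,\id_{n|\cN|}\succ 0$ whenever $\eta^k>0$. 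Since $\mathbf{D}_{\kappa^k}\succ 0$, a second Schur complement (now with respect to the $\btheta$-block) reduces the displayed inequality to $(\eta^k)^2\gamma^k\delta_1\,\id_{n|\cN|}-(\eta^k)^2 A^\top\mathbf{D}_{\kappa^k}^{-1}A\succeq 0$, i.e., after dividing by $(\eta^k)^2>0$, to $\gamma^k\delta_1\,\id_{n|\cN|}\succeq A^\top\mathbf{D}_{\kappa^k}^{-1}A$. Using $A=\diag([A_i]_{i\in\cN})$ and $\mathbf{D}_{\kappa^k}^{-1}=\diag([\kappa_i^k\,\id_{m_i}]_{i\in\cN})$, this decouples into the node-wise conditions $\gamma^k\delta_1\,\id_n\succeq \kappa_i^k A_i^\top A_i$ for $i\in\cN$, which hold exactly when $\kappa_i^k\norm{A_i}^2/\gamma^k\le\delta_1$ for all $i\in\cN$; this is precisely \eqref{eq:schur-cond}.

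There is no substantive obstacle here. The two points that require care are (i) checking that each pivot block used to take a Schur complement is strictly positive definite, so that the equivalences above are genuine iff-statements, and (ii) noticing that, unlike the static case where $M^\top M=\Omega\otimes\id_n$ had to be bounded above by $2\diag([d_i]_{i\in\cN})$ (which is why $\cA^k$ there carries the factors $2d_i+\delta_1$), here the $\blambda$-coupling block is the identity, so the term $(\eta^k)^2\gamma^k\,\id_{n|\cN|}$ it contributes is exactly absorbed by the ``$1$'' in the factor $(1+\delta_1)$ defining $\cA^k$ in this section.
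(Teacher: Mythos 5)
Your proposal is correct and follows essentially the same route as the paper's proof: two nested Schur complements, first eliminating the $\mathbf{D}_{\gamma^k}$ ($\blambda$) block and then the $\mathbf{D}_{\kappa^k}$ ($\btheta$) block, reducing positive semidefiniteness of $\mathbf{\bar{Q}}_k$ to $\gamma^k\delta_1\,\id\succeq A^\top\mathbf{D}_{\kappa^k}^{-1}A$, which decouples into \eqref{eq:schur-cond}. Your write-up is slightly more explicit than the paper's (which leaves the substitution of $\cA^k$ implicit), and your observation that the identity coupling block is exactly absorbed by the ``$1$'' in $(1+\delta_1)$ — in contrast to the Laplacian bound needed in the static case — matches the intent of the definitions.
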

\begin{proof}
Let $\mathbf{D}_{\gamma^k}$ and $\mathbf{D}_{\kappa^k}$ be as in Definition~\ref{def:bregman}. Since $\mathbf{D}_{\gamma^k}\succ 0$, Schur complement condition implies that $\bar{\mathbf{Q}}_k\succeq 0$ if and only if
{\small
\begin{align}\label{eq:schur-cond-2}
\begin{bmatrix}\cA^k & -\eta^k A^\top\\ -\eta^k A & \mathbf{D}_{\kappa^k} \end{bmatrix}
-\gamma^k(\eta^k)^2 \begin{bmatrix} \id_n & 0 \\ 0 & 0\end{bmatrix} \succeq 0.
\end{align}}%
Moreover, since $\mathbf{D}_{\kappa^k}\succ 0$, again using Schur complement one can conclude that \eqref{eq:schur-cond-2} holds if and only if $\frac{1}{(\eta^k)^2}\cA^k-\gamma^k\id_n- A^\top \mathbf{D}_{\kappa^k}^{-1} A\succeq 0$. This condition holds if \eqref{eq:schur-cond} is true.
\end{proof}
\begin{lemma}\label{lem:stepsize}
Let $\bD_{\tau^k}$ and $\bD_{\kappa^k,\gamma^k}$ be as given in Definition \ref{def:bregman}, and $\widetilde{\bD}_{\tau^k}$, $\bar{\bD}_{\tau^k}$ and $\bar{\bQ}_k$ be as in Definition \ref{definition} for $\alpha>0$ chosen according to Lemma \ref{lem:restricted-convex-general} and Remark \ref{rem:g}, and $\mu\in(0,~\max\{\ubar{\mu},~\mu_{\alpha}\})$. Suppose $\{\tau^k\},~\{\eta^k\},~\{\gamma^k\}\subset\reals_{++}$, $\{\kappa_i^k\}_{k\geq 0}\subset\reals_{++}$ for $i\in\mathcal{N}$ are chosen as in DPDA-TV diplayed in Fig.~\ref{alg:PDD}, then the following relations hold for all $k\geq 0$:
{\small
\begin{subequations}
\label{eq:cond-d}
\begin{align}
&\mathbf{\bar{Q}}_k=\begin{bmatrix}
\cA^k &-\eta^k T^\top\\
-\eta^k T & \mathbf{D}_{\kappa^k,\gamma^k}
\end{bmatrix}\succeq 0,\label{cond-0-d} \\
&{\gamma^k}{\bD}_{\tau^k}\succeq \gamma^{k+1}\widetilde{\bD}_{\tau^{k+1}}, \label{cond-1-d}\\
& \gamma^k\bD_{\kappa^k}\succeq \gamma^{k+1}\bD_{\kappa^{k+1}},\label{cond-2-d}\\
& \gamma^k\bD_{\gamma^k}\succeq \gamma^{k+1}\bD_{\gamma^{k+1}}, \label{cond-3-d}\\
& \gamma^k= \gamma^{k+1}\eta^{k+1},\label{cond-4-d}\\
& \gamma^k\bar{\bD}_{\tau^k}\succeq \gamma^{k+1}\cA^{k+1}\label{cond-5-d}.
\end{align}
\end{subequations}}%
{Moreover, $\eta^k\in(0,1)$, $0<\frac{1}{\tilde{\tau}^k}<\frac{1}{\tau^k}=\cO(k)$, and $0<\gamma^k=\cO(k)$.}
\end{lemma}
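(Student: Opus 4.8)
The plan is to follow the template of the proof of Lemma~\ref{lem:stepsize-s}: verify that the explicit recursions of DPDA-TV (Fig.~\ref{alg:PDD}) make every relation in \eqref{eq:cond-d} hold, most of them with equality. First I would extract the two invariants that drive everything, each by a one-line induction on Steps~4--5 of Fig.~\ref{alg:PDD}: $\gamma^k\ttau^k=\gamma^0\ttau^0$ for all $k\geq 0$ (since $\gamma^{k+1}\ttau^{k+1}=(\gamma^k/\eta^{k+1})(\eta^{k+1}\ttau^k)=\gamma^k\ttau^k$), and $\kappa_i^k\norm{A_i}^2/\gamma^k=\delta_1$ for all $i\in\cN$ and all $k$ (immediate from $\kappa_i^k=\gamma^k\delta_1/\norm{A_i}^2$). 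I would also record $1/\tau^k=1/\ttau^k+\mu$ from Step~4, so that $\mathbf{\widetilde{D}}_{\tau^k}=\tfrac{1}{\ttau^k}\,\id_{n|\cN|}$, together with the consequent identity $(\eta^{k+1})^2=(1+\mu\ttau^k)^{-1}=1-\mu\tau^k=\tau^k/\ttau^k$.

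With these in hand the verifications are routine, and I would carry them out in the order \eqref{cond-0-d}, \eqref{cond-2-d}--\eqref{cond-4-d}, \eqref{cond-1-d}, \eqref{cond-5-d}. For \eqref{cond-0-d}: when $k\geq 1$, $\eta^k>0$ and $\kappa_i^k\norm{A_i}^2/\gamma^k=\delta_1$, so Lemma~\ref{lem:schur} gives $\mathbf{\bar{Q}}_k\succeq 0$; when $k=0$, $\eta^0=0$ forces $\cA^0=\zero$ and $\mathbf{\bar{Q}}_0=\diag(\zero,\mathbf{D}_{\kappa^0,\gamma^0})\succeq 0$. Relations \eqref{cond-2-d} and \eqref{cond-3-d} hold with equality because $\gamma^k\mathbf{D}_{\kappa^k}=\diag([\delta_1^{-1}\norm{A_i}^2\id_{m_i}]_{i\in\cN})$ and $\gamma^k\mathbf{D}_{\gamma^k}=\id_{m_0}$ are $k$-independent; \eqref{cond-4-d} is Step~5 verbatim; and \eqref{cond-1-d} holds with equality since $\gamma^{k+1}\mathbf{\widetilde{D}}_{\tau^{k+1}}=\tfrac{\gamma^{k+1}}{\ttau^{k+1}}\id=\tfrac{\gamma^k}{(\eta^{k+1})^2\ttau^k}\id=\tfrac{\gamma^k}{\tau^k}\id=\gamma^k\mathbf{D}_{\tau^k}$, using the identity for $(\eta^{k+1})^2$ with $\gamma^{k+1}=\gamma^k/\eta^{k+1}$ and $\ttau^{k+1}=\eta^{k+1}\ttau^k$. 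For \eqref{cond-5-d} I would use \eqref{cond-4-d} to write $\gamma^{k+1}\cA^{k+1}=(\gamma^{k+1}\eta^{k+1})^2(1+\delta_1)\id=(\gamma^k)^2(1+\delta_1)\id$, reducing the matrix inequality to the scalar bounds $1/\tau^k-(L_i+\alpha)\geq\gamma^k(1+\delta_1)$ for $i\in\cN$; substituting $\gamma^k=\gamma^0\ttau^0/\ttau^k$, $\gamma^0(1+\delta_1)=\delta_2$, $1/\tau^k=1/\ttau^k+\mu$, and $1/\ttau^0=1/\tau^0-\mu=L_{\max}+\delta_2+\alpha-\mu$ (from the initialization $1/\tau^0=\max_{i\in\cN}(L_i+\delta_2+\alpha)$), this becomes $\tfrac{1-\delta_2\ttau^0}{\ttau^k}\geq L_i+\alpha-\mu$, which for the worst index reduces to $\ttau^k\leq\ttau^0$ --- true since $\eta^k\in(0,1)$ makes $\{\ttau^k\}$ nonincreasing. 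Finally, $\eta^{k+1}=(1+\mu\ttau^k)^{-1/2}\in(0,1)$ because $\mu>0$ and $\ttau^k>0$ (induction: $\ttau^0>0$, $\ttau^{k+1}=\eta^{k+1}\ttau^k>0$), and $1/\tau^k=1/\ttau^k+\mu>1/\ttau^k>0$; setting $t_k\triangleq 1/\ttau^k$, the recursion $\ttau^{k+1}=\eta^{k+1}\ttau^k$ becomes $t_{k+1}^2=t_k^2+\mu t_k$, whose standard analysis (as in~\cite{chambolle2015ergodic}) gives $t_k=\Theta(k)$, hence $1/\tau^k=\cO(k)$ and $\gamma^k=\gamma^0\ttau^0 t_k=\cO(k)$.

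The only genuinely delicate point will be \eqref{cond-5-d} at $k=0$, where the initialization of $\tau^0$, $\gamma^0$, $\kappa_i^0$ in Fig.~\ref{alg:PDD} is tuned precisely so that the scalar bound is exactly met; there one needs $\mu\leq L_{\max}+\alpha$ so that $1-\delta_2\ttau^0=\tfrac{L_{\max}+\alpha-\mu}{L_{\max}+\delta_2+\alpha-\mu}\geq 0$, which I would justify from $\mu\leq\max\{\ubar{\mu},\mu_\alpha\}$ together with $\ubar{\mu}\leq L_{\max}$ (strong-convexity modulus of any $f_i$ is at most its gradient-Lipschitz constant) and $\mu_\alpha\leq L_{\max}+\alpha$ (the modulus of $f_\alpha$ from Lemma~\ref{lem:restricted-convex-general} is at most the Lipschitz constant of $\nabla f_\alpha$). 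The other mild task is extracting $t_k=\Theta(k)$ from $t_{k+1}^2=t_k^2+\mu t_k$: $t_k$ is increasing and grows by at least $\mu/4$ per step once $t_k\geq\mu/2$ (giving the linear lower bound), while $t_K^2=t_0^2+\mu\sum_{k<K}t_k\leq t_0^2+\mu K t_K$ gives the linear upper bound. Everything else is bookkeeping identical in spirit to the static case.
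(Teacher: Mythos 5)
Your proposal is correct and follows essentially the same route as the paper, which disposes of the lemma in one line by invoking Lemma~\ref{lem:schur} for \eqref{cond-0-d} and deferring to the static-case discussion (equalities for \eqref{cond-1-d}--\eqref{cond-4-d}, and the relations $\tilde{\tau}^k>\tau^k$, $\tilde{\tau}^{k+1}<\tilde{\tau}^k$, $\gamma^k\tilde{\tau}^k=\gamma^{k+1}\tilde{\tau}^{k+1}$ for \eqref{cond-5-d}). You simply write out the verifications the paper declares ``trivial to check,'' including the $k=0$ edge case of \eqref{cond-0-d} and the sign condition $\mu\leq L_{\max}+\alpha$ behind \eqref{cond-5-d}, all of which check out.
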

\begin{proof}
Using the result of Lemma~\ref{lem:schur}, it is trivial to check that the parameter sequence constructed in Fig.~\ref{alg:PDD} {satisfies \eqref{eq:cond-d}} -- see also the discussion in the proof of Lemma~\ref{lem:stepsize-s}.
\end{proof}
In order to prove Theorem~\ref{thm:dynamic-error-bounds}, we need Lemma~\ref{thm:dynamic-rate} which help us to appropriately bound $\mathcal{L}(\bar{\bx}^K,\by)-\mathcal{L}(\bx^*,\bar{\by}^K)$ {for all $\by\in\cY$ and $\norm{\bxi^K-\bx^*}$ for all $K\geq 1$. In particular, Lemma~\ref{thm:dynamic-rate} is similar to Lemma~\ref{lem:pda-lemma} for the static case, but it also accounts for the approximation errors for the time-varying case, arising due to use of $\cR^k$.}
\begin{lemma}\label{thm:dynamic-rate}
Let $\{\bxi^k,\by^k\}_{k\geq 0}$ be the iterate sequence generated using Algorithm DPDA-TV as displayed in Fig.~\ref{alg:PDD} which is initialized from an arbitrary $\bx^0$ and $\by^0$, where $\by^k=[{\btheta^k}^\top {\bmu^k}^\top]^\top$ for $k\geq 0$; and let $\{\be_1^k\}_{k\geq 1}$ and $\{\be_2^k\}_{k\geq 1}$ be the error sequence defined as in \eqref{eq:prox-error-seq-1}. For any $\by\in\cY$, the iterate sequence $\{\bxi^k,\by^k\}_{k\geq 0}$ satisfies for all $k\geq 0$,
{\small
\begin{align}\label{lemeq:inexact-lagrangian-bound}
\mathcal{L}&(\bxi^{k+1},\by)-\mathcal{L}(\bx^*,\by^{k+1})\leq E_1^{k+1}(\bmu)+E_2^{k+1}\\ &+\bigg[\frac{1}{2}\norm{\bx^*-\bxi^{k}}_{\widetilde{\bD}_{\tau^k}}^2+\frac{1}{2}\norm{\by-\by^k}_{\bD_{\kappa^k,\gamma^k}}^2-\eta^k\fprod{ T(\bxi^k-\bxi^{k-1}),~\by-\by^k}+\frac{1}{2}\norm{\bxi^k-\bxi^{k-1}}^2_{\cA^k}\bigg]\nonumber\\
&-\bigg[\frac{1}{2}\norm{\bx^*-\bxi^{k+1}}_{{\bD}_{\tau^k}}^2+\frac{1}{2}\norm{\by-\by^{k+1}}_{\bD_{\kappa^k,\gamma^k}}^2-\fprod{T(\bxi^{k+1}-\bxi^{k}),~\by-\by^{k+1}} +\frac{1}{2}\norm{\bxi^{k+1}-\bxi^k}^2_{\bar{\bD}_{\tau^k}}\bigg].\nonumber
\end{align}
}%
where 
 $E_1^{k+1}(\bmu)\triangleq\|\be^{k+1}\| \left({4}\gamma^{k}\sqrt{N}~\Delta+\|\bmu-\bmu^{k+1}\|\right)$, and $E_2^{k+1}\triangleq \norm{\be_3^{k+1}}\left(\frac{2}{\tau^k}\sqrt{N}\Delta+\alpha\norm{\be_2^{k+1}}\right)$ for {$k\geq 0$}.
\end{lemma}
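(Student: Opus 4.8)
The plan is to retrace the proof of Lemma~\ref{lem:pda-lemma}, which is the exact-iteration counterpart of this statement, and to track the two perturbations caused by replacing $\cP_\cC$ with the inexact averaging operator $\cR^k$ in \eqref{eq:pock-pd-2-mu}--\eqref{eq:pock-pd-2-xi}. Using the error sequences \eqref{eq:prox-error-seq-1}, the computed iterates can be written as $\bmu^{k+1}=\blambda^{k+1}+\gamma^k\be_1^{k+1}$ and $\bxi^{k+1}=\bx^{k+1}+\be_3^{k+1}$, where $\blambda^{k+1}$, $\bx^{k+1}$ are the \emph{exact} PDA updates \eqref{eq:pock-pd-2-lambda}, \eqref{eq:pock-pd-2-x} run with $\bmu^{k+1}$ as the dual input and with the true gradient $\nabla g(\bxi^k)=\nabla f(\bxi^k)+\alpha\cP_{\cC^\circ}(\bxi^k)$ (cf.~\eqref{grad-g}); consequently the $\bxi$-update differs from the exact $\bx$-update only through the gradient perturbation $\alpha\be_2^{k+1}$, so $\|\be_3^{k+1}\|\le\tau^k\alpha\|\be_2^{k+1}\|$ by non-expansiveness of $\prox{\tau^k\rho}$. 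Throughout I would use the compact-domain bounds on $\|\bxi^k\|$ and $\|\bx^*-\bxi^{k+1}\|$ ($\le 2\sqrt N\Delta$), the relation \eqref{eq:support-error} for $\sigma_{\Ct}$, and the step-size ordering $1/\tau^k>\max_{i\in\cN}(L_i+\alpha)$ enforced by the initialization in Fig.~\ref{alg:PDD}.

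For the dual block the $\btheta$-update is an exact proximal step and is handled verbatim as in Lemma~\ref{lem:pda-lemma}. For the $\bmu$-block I would write the $1/\gamma^k$-strong-convexity inequality of the exact $\blambda^{k+1}$-subproblem at an arbitrary $\bmu$, then substitute $\blambda^{k+1}=\bmu^{k+1}-\gamma^k\be_1^{k+1}$: the support value is lower bounded via \eqref{eq:support-error} by $\sigma_{\Ct}(\bmu^{k+1})-2\sqrt N\Delta\,\gamma^k\|\be_1^{k+1}\|$, the linear term $\fprod{\bxi^k+\eta^k(\bxi^k-\bxi^{k-1}),\,\blambda^{k+1}}$ contributes a further $2\sqrt N\Delta\,\gamma^k\|\be_1^{k+1}\|$ after Cauchy--Schwarz and the norm bounds, and expanding $\|\bmu-\blambda^{k+1}\|^2$ produces the cross term $\fprod{\be_1^{k+1},\bmu-\bmu^{k+1}}$; together these give the $\bmu$-analogue of \eqref{eq:lemma-y-s} with the extra additive term $E_1^{k+1}(\bmu)$ on the right.

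For the primal block I would reproduce the chain \eqref{eq:x-subproblem}--\eqref{eq:lemma-x-s} for the exact $\bx^{k+1}$-subproblem, which is separable and $1/\tau^k$-strongly convex in each $x_i$; the descent lemma for $g$ now uses $\bL_g=\diag([(L_i+\alpha)\id_n]_{i\in\cN})$ because $\cP_{\cC^\circ}$ is a symmetric idempotent linear map, so $\tfrac{\alpha}{2}d_\cC^2$ admits a quadratic upper model with coefficient $\alpha$, and $g$ is strongly convex with modulus $\ge\mu$ by Lemma~\ref{lem:restricted-convex-general} together with Remark~\ref{rem:g}. This yields the exact-iterate inequality corresponding to \eqref{eq:lemma-x-s} in terms of $\bx^{k+1}$. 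I would then pass to the stated form in $\bxi^{k+1}$ by substituting $\bx^{k+1}=\bxi^{k+1}-\be_3^{k+1}$: the function-value difference $\Phi(\bxi^{k+1})-\Phi(\bx^{k+1})$ is bounded using the optimality conditions of the perturbed $\bxi$-subproblem and the unperturbed $\bx$-subproblem (this step also cancels the spurious $\fprod{T\be_3^{k+1},\by^{k+1}}$ against the shift produced inside $\fprod{T(\bx^{k+1}-\bx^*),\by^{k+1}}$), while the $\bD_{\tau^k}$- and $\bar\bD_{\tau^k}$-quadratics release cross terms $\propto\tfrac{1}{\tau^k}\fprod{\cdot,\be_3^{k+1}}$ and the gradient perturbation releases $\alpha\fprod{\be_2^{k+1},\cdot}$; with $\|\be_3^{k+1}\|\le\tau^k\alpha\|\be_2^{k+1}\|$ and $1/\tau^k>L_i+\alpha$, all of these collapse into the single additive error $E_2^{k+1}$.

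Finally I would add the primal inequality, the $\bmu$-inequality, and the exact $\btheta$-inequality, using $\fprod{A^\top\btheta^{k+1}+\bmu^{k+1},\bx^*}=\fprod{T\bx^*,\by^{k+1}}$ exactly as in Lemma~\ref{lem:pda-lemma}, to obtain $\cL(\bxi^{k+1},\by)-\cL(\bx^*,\by^{k+1})\le E_1^{k+1}(\bmu)+E_2^{k+1}+[\text{terms at }k]-[\text{terms at }k{+}1]$; then I would split $\eta^k\fprod{T(\bxi^k-\bxi^{k-1}),\by^{k+1}-\by}$ as in \eqref{eq:pda-eq} and absorb $\eta^k\fprod{T(\bxi^k-\bxi^{k-1}),\by^{k+1}-\by^k}$ using $\bar\bQ_k\succeq 0$ (Lemma~\ref{lem:schur}, with $\cA^k$ as in Definition~\ref{definition}), mirroring \eqref{eq:Q-ineq-s}; rearranging gives the claimed bound. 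I expect the primal-step bookkeeping to be the main obstacle: one must route $\alpha\be_2^{k+1}$ and $\be_3^{k+1}=\bxi^{k+1}-\bx^{k+1}$ through the quadratic and $\Phi$-difference terms so that no dummy $\by$ or otherwise unbounded quantity leaks into the error, which is exactly what forces the combination of the new cross terms with the existing $\tfrac12\|\bxi^{k+1}-\bxi^k\|^2_{\bar\bD_{\tau^k}}$ model and the use of the step-size ordering to recover the precise constants in $E_1^{k+1}$ and $E_2^{k+1}$.
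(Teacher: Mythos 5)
Your proposal is correct and follows essentially the same route as the paper: it treats $\bmu^{k+1}$ and $\bxi^{k+1}$ as the exact PDA updates perturbed by $\gamma^k\be_1^{k+1}$ and $\be_3^{k+1}$, handles the support-function shift via \eqref{eq:support-error} and the primal shift via the optimality condition of the perturbed $\bxi$-subproblem, collects the cross terms into $E_1^{k+1}(\bmu)$ and $E_2^{k+1}$ exactly as in the paper's $S_1^{k+1}$, $S_2^{k+1}$, and closes with the same $\bar{\bQ}_k\succeq 0$ absorption as in the static lemma. The only cosmetic difference is that the paper's $S_2^{k+1}$ collapses identically to $\fprod{\be_3^{k+1},\tfrac{1}{\tau^k}(\bx^{k+1}-\bx^*)-\alpha\be_2^{k+1}}$ via \eqref{opt-cond-xi} without invoking the step-size ordering you mention, so that condition is not actually needed there.
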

\begin{proof}
{Fix $\by=[\btheta^\top~\bmu^\top]^\top\in\cY$.} For $k\geq 0$, {let $\bq^k\triangleq \bxi^{k}+\eta^k(\bxi^k-\bxi^{k-1})$ and define $q_i^k\in\reals^{n}$ for $i\in\cN$ such that {$\bq^k=[{q_1^k}^\top \ldots {q_N^k}^\top]^\top$}}. {It follows from~\eqref{eq:pock-pd-2-lambda} that using} strong convexity of ${\sigma_{\Ct}}({\bmu})-\langle \bq^k,~{\bmu}\rangle+{1\over 2\gamma^k}\|{\bmu}-{\bmu}^k\|_2^2$ in $\bmu$ and the fact that ${\blambda}^{k+1}$ is its minimizer, we conclude that
{\small
\begin{equation*}
{\sigma_{\Ct}}(\bmu)-\langle \bq^k,~\bmu\rangle+\tfrac{1}{2\gamma^k}\|{\bmu}-{\bmu}^k\|^2 \geq {\sigma_{\Ct}}({\blambda}^{k+1})-\langle \bq^k,~{\blambda}^{k+1}\rangle+\tfrac{1}{2\gamma^k}\|{\blambda}^{k+1}-{\bmu}^k\|^2+\tfrac{1}{2\gamma^k}\|\bmu-{\blambda}^{k+1}\|^2.
\end{equation*}}%
According to \eqref{eq:prox-error-seq-1}, ${\bmu}^{k+1}={\blambda}^{k+1}+\gamma^k \be_1^{k+1}$ for all $k\geq 1$; hence, from \eqref{eq:support-error} we have
{\small
\begin{eqnarray}
\lefteqn{\sigma_\cC({\bmu})-\langle \bq^k,~{\bmu}\rangle+\tfrac{1}{2\gamma^k}\|{\bmu}-{\bmu}^k\|^2 \geq}\nonumber\\
& & \sigma_\cC({\bmu}^{k+1})-\langle \bq^k,~{\bmu}^{k+1}\rangle +\tfrac{1}{2\gamma^k}\|{\bmu}^{k+1}-{\bmu}^k\|^2+\tfrac{1}{2\gamma^k}\|{\bmu}-{\bmu}^{k+1}\|^2- S_1^{k+1}(\bmu),\label{eq:support-function-bound}
\end{eqnarray}
}%
where the error term $S_1^{k+1}(\bmu)$ is defined as
{\small
\begin{equation}
\label{eq:S1k}
S_1^{k+1}(\bmu)\triangleq 2\gamma^k \sqrt{N}~\Delta \|\be_1^{k+1}\|-\gamma^k\|\be_1^{k+1}\|^2-\fprod{\be_1^{k+1},~\bmu-2\bmu^{k+1}+\bmu^k+\gamma^k\bq^k}.
\end{equation}
}%
Note that for all $k\geq 0$, we have $\bmu^k+\gamma^k\bq^k=\gamma^k\bom^k$, $\bmu^{k+1}=\blambda^{k+1}+\gamma^k\be_1^{k+1}$, and $\blambda^{k+1}=\gamma^k(\bom^k-\cP_{\Ct}(\bom^k))$. Using these we get $\bmu^k+\gamma^k\bq^k-\bmu^{k+1}=\gamma^k(\cP_{\Ct}(\bom^k)-\be_1^{k+1})$; therefore, \eqref{eq:S1k} can be 
written as
\begin{equation}
\label{eq:Sk-bound}
S_1^{k+1}(\bmu)={2\gamma^k\sqrt{N}~\Delta}~\|\be_1^{k+1}\|-\fprod{\be_1^{k+1},~\bmu-\bmu^{k+1}+\gamma^k{\cP_{\Ct}(\bom^k)}}\leq E_1^{k+1}(\bmu),
\end{equation}
where the inequality follows from Cauchy-Schwarz and $\|\cP_{\Ct}(\bom^k)\|\leq 2\sqrt{N}~\Delta$ since $\cP_{\Ct}(\bom^k)\in \Ct$.
Moreover, it follows from the strong convexity of the objective in \eqref{eq:pock-pd-2-theta} that
{\small
\begin{eqnarray*}
\lefteqn{\sigma_{\cK_i}(\theta_i)-\fprod{{A_i}q_i^k-b_i,\theta_i}+\frac{1}{2\kappa_i^k}\norm{\theta_i-\theta_i^k}^2}\\
& & \geq \sigma_{\cK_i}(\theta_i^{k+1})-\fprod{{A_i}q_i^k-b_i,\theta_i^{k+1}}+\frac{1}{2\kappa_i^k}\norm{\theta_i^{k+1}-\theta_i^k}^2+\frac{1}{2\kappa_i^k}\norm{\theta_i-\theta_i^{k+1}}^2.
\end{eqnarray*}}%
Summing the above inequality over $i\in\cN$, then summing the resulting inequality with \eqref{eq:support-function-bound} and using \eqref{eq:Sk-bound}, we get
{\small
\begin{eqnarray}
\lefteqn{h(\by)+\frac{1}{2}\norm{\by-\by^k}_{\bD_{\kappa^k,\gamma^k}}^2+E_1^{k+1}(\bmu)\geq}\label{eq:lemma-y}\\
& & h(\by^{k+1})-\fprod{T{(\bxi^{k}+\eta^k(\bxi^k-\bxi^{k-1}))},~\by^{k+1}-\by}+\frac{1}{2}\norm{\by-\by^{k+1}}_{\bD_{\kappa^k,\gamma^k}}^2+ \frac{1}{2}\norm{\by^{k+1}-\by^k}_{\bD_{\kappa^k,\gamma^k}}^2. \nonumber
\end{eqnarray}}%

Let $\bp^k=T^\top \by^k$ for $k\geq 1$. Strong convexity of the objective in \eqref{eq:pock-pd-2-x} implies that
{\small
\begin{eqnarray}\label{eq:rho}
\lefteqn{\rho(\bx^*)+\fprod{\grad g(\bxi^k),~\bx^*}+\fprod{\bp^{k+1},\bx^*}+\frac{1}{2\tau^k}\norm{\bx^*-\bxi^k}^2\geq}\nonumber\\
& & \rho(\bx^{k+1})+\fprod{\grad g(\bxi^k),~\bx^{k+1}}+\fprod{\bp^{k+1},\bx^{k+1}}+\frac{1}{2\tau^k}\norm{\bx^{k+1}-\bxi^k}^2+\frac{1}{2\tau^k}\norm{\bx^*-\bx^{k+1}}^2,
\end{eqnarray}}%
where $\nabla g(\bxi^k)=\nabla f(\bxi^k)+\alpha(\bxi^k-\cP_{\cC}(\bxi^k))$. Also, the optimality condition of \eqref{eq:inexact-rule-xi} implies that, there exist $\bs^{k+1}\in\partial \rho(\bxi^{k+1})$ such that
{\small
\begin{equation*}
\bs^{k+1}+\nabla f(\bxi^k)+\alpha(\bxi^k-{\cR^k}(\bxi^k))+\bp^{k+1}+\frac{1}{\tau^k}(\bxi^{k+1}-\bxi^k)=0,
\end{equation*}
}%
which {together with \eqref{eq:prox-error-seq-1}} implies that,
{\small
\begin{equation}\label{opt-cond-xi}
{\bs^{k+1}+\grad g(\bxi^k)+\bp^{k+1}=~}\bs^{k+1}+\nabla f(\bxi^k)+\alpha(\bxi^k-{\cP_\cC}(\bxi^k))+\bp^{k+1}=\frac{1}{\tau^k}(\bxi^{k}-\bxi^{k+1})-\alpha \be_2^{k+1}.
\end{equation}
}%
{Moreover, since $\rho(\cdot)$ is a convex function and $\bs^{k+1}\in\partial\rho(\bxi^{k+1})$, using \eqref{eq:prox-error-seq-1}} we obtain
{\small
\begin{align}\label{eq:subderivative}
\rho(\bx^{k+1})\geq \rho(\bxi^{k+1})+\fprod{\bs^{k+1},~\bx^{k+1}-\bxi^{k+1}}=\rho(\bxi^{k+1})-\fprod{\bs^{k+1},~\be_3^{k+1}}.
\end{align}
}
Now, 
{using \eqref{eq:prox-error-seq-1} and \eqref{eq:subderivative} within \eqref{eq:rho}}, we conclude that
{\small
\begin{eqnarray}\label{eq:rho_inexact}
\lefteqn{\rho(\bx^*)+\fprod{\grad g(\bxi^k),~\bx^*}+\fprod{\bp^{k+1},\bx^*}+\frac{1}{2\tau^k}\norm{\bx^*-\bxi^k}^2}\\
& & \geq \rho(\bxi^{k+1})+\fprod{\grad g(\bxi^k),~\bxi^{k+1}}+\fprod{\bp^{k+1},~\bxi^{k+1}}+\frac{1}{2\tau^k}\norm{\bxi^{k+1}-\bxi^k}^2+\frac{1}{2\tau^k}\norm{\bx^*-\bxi^{k+1}}^2-S_2^{k+1},\nonumber
\end{eqnarray}}%
where the error term $S_2^{k+1}$ is given as follows
{\small
\begin{align}
S_2^{k+1}\triangleq & -\frac{1}{\tau^k}\norm{\be_3^{k+1}}^2+\fprod{\be_3^{k+1},~\bs^{k+1}+\grad g(\bxi^k)+\bp^{k+1}}+\frac{1}{\tau^k}\fprod{\be^{k+1},~2\bxi^{k+1}-\bxi^k-\bx^*},
\end{align}}%
{Note that using \eqref{opt-cond-xi}, the definition of $S_2^{k+1}$ can be simplified:}
{\small
\begin{equation}\label{eq:S2k}
S_2^{k+1}= \fprod{\be_3^{k+1},~\frac{1}{\tau^k}(\bx^{k+1}-\bx^*)-\alpha\be_2^{k+1}} \leq E_2^{k+1},
\end{equation}}%
where we used the fact that $\norm{\bx^{k+1}}\leq \sqrt{N}\Delta$.
{
In addition, since each $f_i$ has a Lipschitz continuous gradient {with constant $L_i$ and $\tfrac{1}{2}d^2_{\cC}(\bx)$ has a Lipschitz continuous gradient with constant $1$}, we have for any $\bx$ and $\bar{\bx}$ that
{\small
\begin{align}\label{eq:f-lip}
{g(\bx)\leq g(\bar{\bx})+\fprod{\nabla g(\bar{\bx}),~\bx-\bar{\bx}}+\sum_{i\in\cN}\frac{L_i+\alpha}{2}\norm{x_i-\bar{x}_i}^2.}
\end{align}
}%
{Define $\bL_g=\diag([(L_i+\alpha)\id_n]_{i\in\cN})$.} It follows from strong convexity of $\bar{f}$ that choosing $\alpha\geq 0$ according to Lemma \ref{lem:restricted-convex-general} and Remark~\ref{rem:g}, we conclude that for any $\mu\in(0,~\max\{\ubar{\mu},~\mu_\alpha\})$ we have
{\small
\begin{align}\label{eq:g-strong}
g(\bx^*)&\geq g(\bxi^{k})+\fprod{\grad g(\bxi^{k}),~\bx^*-\bxi^{k}} +\frac{\mu}{2}\norm{\bx^*-\bxi^{k}}^2 \nonumber \\
&\geq g(\bxi^{k+1})+\fprod{\grad g(\bxi^{k}),~\bx^*-\bxi^{k+1}} +\frac{\mu}{2}\norm{\bx^*-\bxi^{k}}^2-\frac{1}{2}\norm{\bxi^{k+1}-\bxi^k}^2_{\bL_g}.
\end{align}}%
{where the last inequality follows from \eqref{eq:f-lip}.} 
Next, {summing inequalities \eqref{eq:rho_inexact}} and \eqref{eq:g-strong}, and using \eqref{eq:S2k}, we get
{\small
\begin{eqnarray}
\lefteqn{\Phi(\bx^*)+\frac{1}{2}\norm{\bx^*-\bxi^{k}}_{\widetilde{\bD}_{\tau^k}}^2+E_2^{k+1}\geq}\label{eq:lemma-x}\\
& & \Phi(\bxi^{k+1})+\fprod{T(\bxi^{k+1}-\bx^*),~\by^{k+1}}+\frac{1}{2}\norm{\bx^*-\bxi^{k+1}}_{\bD_{\tau^k}}^2+\frac{1}{2}\norm{\bxi^{k+1}-\bxi^k}^{{2}}_{\mathbf{\bar{D}}_\tau^k}.
\nonumber
\end{eqnarray}}%
Next, summing \eqref{eq:lemma-y} and \eqref{eq:lemma-x}, and rearranging terms, we obtain
{\small
\begin{eqnarray}\label{eq:inexact-lagrangian-bound}
\lefteqn{\mathcal{L}(\bxi^{k+1},\by)-\mathcal{L}(\bx^*,\by^{k+1})}\\
&\leq & E_1^{k+1}(\bmu)+E_2^{k+1}\nonumber \\
& & \mbox{} + \eta^k\fprod{T(\bxi^k-\bxi^{k-1}),~\by^{k+1}-\by}{-\frac{1}{2}\norm{\by^{k+1}-\by^k}_{\bD_{\kappa^k,\gamma^k}}^2}+ \bigg[\frac{1}{2}\norm{\bx^*-\bxi^{k}}_{\widetilde{\bD}_{\tau^k}}^2+\frac{1}{2}\norm{\by-\by^k}_{\bD_{\kappa^k,\gamma^k}}^2\bigg] \nonumber \\
& & \mbox{} -\bigg[ \frac{1}{2}\norm{\bx^*-\bxi^{k+1}}_{{\bD}_{\tau^k}}^2+\frac{1}{2}\norm{\by-\by^{k+1}}_{\bD_{\kappa^k,\gamma^k}}^2
-\fprod{T(\bxi^{k+1}-\bxi^{k}),~\by-\by^{k+1}}+\frac{1}{2}\norm{\bxi^{k+1}-\bxi^k}^2_{\bar{\bD}_{\tau^k}}\bigg].\nonumber
\end{eqnarray}
}%
}
Note that we have,
{\small
\begin{equation*}
\eta^k\fprod{T(\bxi^{k}-\bxi^{k-1}),~\by^{k+1}-\by}=-\eta^k\fprod{ T(\bxi^k-\bxi^{k-1}),~\by-\by^k}+\eta^k\fprod{ T(\bxi^k-\bxi^{k-1}),~\by^{k+1}-\by^k};
\end{equation*}
}
moreover, the last term can be bounded using the fact that $\bar{\bQ}_k \succeq 0$ as follows:
{\small
\begin{eqnarray}\label{eq:Q-ineq-d}
{\eta^k\fprod{ T(\bxi^k-\bxi^{k-1}),\by^{k+1}-\by^k}} \leq \frac{1}{2}\norm{\by^{k+1}-\by^k}_{\bD_{\kappa^k,\gamma^k}}^2+\frac{1}{2}\norm{\bxi^k-\bxi^{k-1}}^2_{\cA^k}.
\end{eqnarray}}%
Combining \eqref{eq:inexact-lagrangian-bound} and \eqref{eq:Q-ineq-d} gives the desired result.
\end{proof}
Now we are ready to prove Theorem~\ref{thm:dynamic-error-bounds}.
\subsection{Proof of Theorem~\ref{thm:dynamic-error-bounds}}
Under Assumption~\ref{assump:saddle-point}, a saddle point $(\bx^*,\by^*)$ for $\min_{\bx\in\cX}\max_{\by\in\cY}\cL(\bx,\by)$ in~\eqref{eq:dynamic-saddle} exists, where $\by^*=[{\btheta^*}^\top,{\blambda^*}^\top]^\top$; moreover, any saddle point $(\bx^*,\btheta^*,\blambda^*)$ satisfies that $\bx^*=\one \otimes x^*$ such that $(x^*,\btheta^*)$ is a primal-dual solution to \eqref{eq:central_problem}. Thus, $\theta_i^*\in\cK_i^\circ$ and $\mathcal{L}(\bx^*,\btheta^*,\blambda^*)=\Phi(\bx^*)$. Recall Definition~\ref{def:problem-components}, we have $g(\bx^*)=f(\bx^*)$ since $d_\cC(\bx^*)=0$; hence, $\Phi(\bx^*)=\varphi(\bx^*)=\sum_{i\in \mathcal{N}}\varphi_i(x^*)$. Therefore, $\mathcal{L}(\bx^*,\btheta^*,\blambda^*)=\varphi(\bx^*)$. Indeed, this implies $\fprod{\bx^*,\blambda^*}-\sigma_{\Ct}(\blambda^*)=0$ which leads to {$\sum_{i\in\cN}\lambda_i^*=\mathbf{0}$}, i.e., $\blambda^*\in\cC^\circ$. Therefore, we have $0=\fprod{\bx^*,\blambda^*}=\sigma_{\Ct}(\blambda^*)$. {In the rest of the proof, we provide the error bounds for a saddle point $(\bx^*,\btheta^*,\blambda^*)$ of $\cL$ such that $\blambda^*=\mathbf{0}$. Note that if $(\bx^*,\btheta^*,\blambda^*)$ is a saddle point of $\cL$ such that $\blambda^*\neq\mathbf{0}$, then it trivially follows that $(\bx^*,\btheta^*,\mathbf{0})$ is another saddle point of $\mathcal{L}$.}

Multiplying both sides of \eqref{lemeq:inexact-lagrangian-bound} by $\frac{\gamma^k}{\gamma^0}$ and using Lemma \ref{lem:stepsize}, we get
{\small
\begin{align}\label{eq:lagrangian-sum}
\frac{\gamma^k}{\gamma^0}[\cL&(\bxi^{k+1},\by)-\cL(\bx^*,\by^{k+1})]\leq \frac{\gamma^k}{\gamma^0}(E_1^{k+1}(\bmu)+E_2^{k+1})\\ &+\frac{\gamma^k}{\gamma^0}\bigg[\frac{1}{2}\norm{\bx^*-\bxi^{k}}_{\widetilde{\bD}_{\tau^k}}^2+\frac{1}{2}\norm{\by-\by^k}_{\bD_{\kappa^k,\gamma^k}}^2-\eta^k\fprod{ T(\bxi^k-\bxi^{k-1}),~\by-\by^k}+\frac{1}{2}\norm{\bxi^k-\bxi^{k-1}}^2_{\cA^k}\bigg]\nonumber\\
&-\frac{\gamma^{k+1}}{\gamma^0}\bigg[\frac{1}{2}\norm{\bx^*-\bxi^{k+1}}_{{\widetilde{\bD}}_{\tau^{k+1}}}^2+\frac{1}{2}\norm{\by-\by^{k+1}}_{\bD_{\kappa^{k+1},\gamma^{k+1}}}^2-\eta^{k+1}\fprod{T(\bxi^{k+1}-\bxi^{k}),~\by-\by^{k+1}} +\frac{1}{2}\norm{\bxi^{k+1}-\bxi^k}^2_{\cA^{k+1}}\bigg].\nonumber
\end{align}
}%
Next, {we sum \eqref{eq:lagrangian-sum} over $k=0$ to $K-1$;
using Jensen's inequality and the following facts: $\bar{\bQ}_K\succeq 0$ and $\bxi^{-1}=\bxi^0=\bx^0$}, we get
{\small
\begin{align}\label{eq:dynamic-saddle-rate}
N_K(\mathcal{L}(\bar{\bxi}^K,\by)-\mathcal{L}(\bx^*,\bar{\by}^K)) \leq & \Big[ \frac{1}{2}\norm{\bx^*-\bx^{0}}_{{\widetilde{\bD}}_{\tau^0}}^2+\frac{1}{2}\norm{\by-\by^0}_{\bD_{\kappa^0,\gamma^0}}^2+\sum_{k=0}^{K-1}\frac{\gamma^k}{\gamma^0} \left( E_1^{{k+1}}(\bmu) + E_2^{{k+1}}\right)\Big] \nonumber \\
&-\frac{\gamma^K}{\gamma^0}\Big[ \frac{1}{2}\norm{\bx^*-\bxi^{K}}_{\tilde{\bD}_{\tau^K}}^2+\frac{1}{2}\norm{\bz^K}^2_{\bar{\bQ}_K}\Big],
\end{align}}%
where $\bz^K=[(\bxi^K-\bxi^{K-1})^\top~(\by-\by^K)^\top]^\top$, $N_K=\sum_{k=1}^{K}\frac{\gamma^{k-1}}{\gamma^0}$, $\bar{\bxi}^{K}=N_K^{-1}\sum_{k=1}^{K}\frac{\gamma^{k-1}}{\gamma^0}\bxi^k$ and {$\bar{\by}^{K}=N_K^{-1}\sum_{k=1}^{K}\frac{\gamma^{k-1}}{\gamma^0}\by^k$ for $\by^k=[{\btheta^k}^\top {\bmu^k}^\top]^\top$ for $k\geq 0$}.
Note that $E_1^{{k+1}}(\bmu)$ and $E_2^{k+1}$ appearing in \eqref{eq:dynamic-saddle-rate} are the error terms due to approximating {$\cP_{\cC}$ with $\cR^k$} in the $k$-th iteration of the algorithm for $k\geq 0$. 
Furthermore, since $\norm{\bz^K}_{\bar{\bQ}_K}\geq 0$ and $\tilde{\tau}^k>\tau^k$ for $k\geq 0$, \eqref{eq:dynamic-saddle-rate} can be written more explicitly as follows: for any {$[\btheta^\top,~\bmu^\top]\in\cY$ and for all $K\geq 1$}, we have
{\small
\begin{align} \label{eq:saddle-rate-dynamic}
\mathcal{L}(\bar{\bxi}^K,\btheta,\bmu)-&\mathcal{L}({\bf x}^*,\bar{\btheta}^K,\bar{\bmu}^K)\leq \Theta(\bx^*,\btheta,\bmu)/N_K, \quad \norm{{\bxi}^K-\bx^*}\leq \frac{\tilde{\tau}^K}{\gamma^K}2\gamma^0 \Theta(\bx^*,\btheta,\bmu)\\
\Theta(\bx^*,\btheta,\bmu)\triangleq&
{1\over 2\gamma^0}\|\bmu-\blambda^0\|^2+\sum_{i\in\mathcal{N}}\bigg[{1\over 2\tau_i^0}\|x^*-x_i^0\|^2+{1\over 2\kappa_i^0}\|\theta_i-\theta_i^0\|^2 \bigg]+\sum_{k=0}^{K-1} \frac{\gamma^k}{\gamma^0}\left( E_1^{k+1}(\bmu) +E_2^{k+1}\right).\nonumber
\end{align}
}%
Under Assumption~\ref{assump:saddle-point}, one can construct a saddle point $(\bx^*,\btheta^*,\blambda^*)$ for $\cL$ in \eqref{eq:dynamic-saddle} such that $\blambda^*=\mathbf{0}$;
hence, $\mathcal{L}(\bf x^*,\btheta^*,\blambda^*)=\varphi(\bx^*)$ and $\theta_i^*\in\cK_i^\circ$ for $i\in\cN$. Define $\tilde{\btheta}=[\tilde{\theta}_i]_{i\in\cN}$ such that $\tilde{\theta}_i\triangleq 2\|\theta_i^*\|
\big( \|\cP_{\mathcal{K}_i^\circ}(A_i\bar{\xi}_i^K-b_i)\|\big)^{-1}~\cP_{\mathcal{K}_i^\circ}(A_i\bar{\xi}_i^K-b_i)\in\cK_i^\circ$, which implies
{
\begin{equation}
\label{eq:tilde-theta-dynamic}
\langle A_i\bar{\xi}_i^K -b_i, \tilde{\theta}_i \rangle=2\|\theta_i^*\|~d_{\mathcal{K}_i}(A_i\bar{\xi}_i^K -b_i).
\end{equation}}%
Note that ${\cC}$ is a closed convex cone, and the projection $\mathcal{P}_{\cC}(\bx)=\one\otimes {p}(\bx)$ 
 -- see \eqref{eq:average}. Similarly, define $\tilde{\bmu}={\mathcal{P}_{\cC^\circ}(\bar{\bxi}^K)\over \|\mathcal{P}_{\cC^\circ}(\bar{\bxi}^K)\|}\in \cC^\circ$, where $\cC^\circ$ denotes polar cone of $\cC$. Hence, it can be verified that $\langle \tilde{\bmu}, \bar{\bxi}^K\rangle= d_{\cC}(\bar{\bxi}^K)$. 
Note that $\tilde{\bmu} \in \cC^\circ$ implies that $\sigma_{\cC}(\tilde{\bmu})=0$; moreover, we also have $\Ct \subseteq \cC$; hence, $\sigma_{\Ct} (\tilde{\bmu})\leq\sigma_{\cC}(\tilde{\bmu})=0$. Therefore, we can conclude that $\sigma_{\Ct}(\tilde{\bmu})=0$ since $\zero\in \Ct$. Together with \eqref{eq:tilde-theta-dynamic}, we get
{
\begin{equation}\label{eq:lagrange-equality}
\cL(\bar{\bxi}^K,\tilde{\btheta},\tilde{\bmu})= \Phi(\bar{\bxi}^K)+2\sum_{i\in\cN} d_{\mathcal{K}_i}(A_i\bar{\xi}_i^K-b_i)\|\theta_i^*\|+d_{\cC}(\bar{\bxi}^K).
\end{equation}
}
Since, $\bx^*\in\Ct$ we also have that
{
\begin{equation}\label{eq:v-bar}
\fprod{\bx^*,~\bar{\bv}^K}-\sigma_{\Ct}(\bar{\bv}^K) \leq \sup_{\bv} \fprod{\bx^*,~{\bv}}-\sigma_{\Ct}({\bv}) =\ind{\Ct}(\bx^*)=0.
\end{equation}}%
Note that for any $i\in\cN$, $\bar{\theta}_i^K\in\cK_i$; hence, $\sigma_{\cK_i}(\bar{\theta}_i^K)=0$. In addition, since $\bar{\theta}^K_i\in\cK_i^\circ$, and $A_i x^*-b_i\in\cK_i$, we have
{
\begin{equation}\label{eq:theta-xstar}
\fprod{A_ix^*-b_i,~\bar{\theta}^K_i} \leq 0.
\end{equation}}%
Therefore, \eqref{eq:v-bar} and \eqref{eq:theta-xstar} imply
{
\begin{equation}\label{eq:lagrange-inequality}
\cL(\bx^*,\bar{\btheta}^K,\bar{\bmu}^K) {\leq} \varphi(\bx^*).
\end{equation}}%
Thus, \eqref{eq:lagrange-equality}, {\eqref{eq:lagrange-inequality}} and \eqref{eq:saddle-rate-dynamic} together with the definitions of $\tilde{\btheta}$, $\tilde{\bmu}$ and the fact that $\bmu^0=\mathbf{0}$ and $\btheta^0=\mathbf{0}$ imply that
{
\begin{eqnarray}\label{eq:upper-bound-dynamic}
\lefteqn{\Phi(\bar{\bxi}^K)-\varphi(\bx^*)+2\sum_{i\in\cN}\|\theta_i^*\|~ d_{\mathcal{K}_i}(A_i\bar{\xi}_i^K-b_i)+d_{\cC}(\bar{\bxi}^K)} \nonumber \\
& &\leq  \Theta(\bx^*,\tilde{\btheta},\tilde{\bmu})/N_K \leq \frac{1}{N_K}\bigg(\Theta_1+\sum_{k=0}^{K-1}\frac{\gamma^k}{\gamma^0}\left( E_1^{k+1}(\tilde{\bmu})+E_2^{k+1} \right)\bigg).
\end{eqnarray}}%
Since $({\bf x}^*,\btheta^*,\blambda^*)$ is a saddle-point for $\cL$ in \eqref{eq:dynamic-saddle}, we have $\mathcal{L}(\bar{\bxi}^K,\btheta^*,\blambda^*)-\mathcal{L}({\bf x}^*,\btheta^*,\blambda^*) \geq 0$; therefore,
{
\begin{equation}\label{eq:aux-lower-dynamic}
\Phi(\bar{\bxi}^K)-\varphi({\bf x}^*)+\sum_{i\in\cN}\fprod{\theta_i^*,~A_i\bar{\xi}_i^K-b_i}\geq 0.
\end{equation}
}%
Using conic decomposition of $\fprod{A_i\bar{\xi}_i^K-b_i,\theta_i^*}$ and the fact that $\theta_i^*\in\cK_i^\circ$, we have that,
{
$$\langle A_i\bar{\xi}_i^K-b_i, \theta_i^*\rangle\leq \|\theta^*_i\|~d_{\mathcal{K}_i}(A_i\bar{\xi}_i^K-b_i).$$}%
Thus, together with \eqref{eq:aux-lower-dynamic}, we conclude that
{\small
\begin{equation}\label{eq:lower-bound-dynamic}
\Phi(\bar{\bxi}^K)-\varphi({\bf x}^*)+\sum_{i\in\cN}\|\theta^*_i\|~d_{\mathcal{K}_i}(A_i\bar{\xi}_i^K-b_i) \geq 0.
\end{equation}}%
{Provided that we show $\Theta_1+\sum_{k=0}^{K-1}\frac{\gamma^k}{\gamma^0}\left( E_1^{k+1}(\tilde{\bmu})+E_2^{k+1} \right)\leq\Theta(K)$ for some $\Theta(K)=\cO\big(\sum_{k=0}^{K-1}\beta^{q_k}k^4\big)$, the desired result in \eqref{eq:rate_result-d} follows from \eqref{eq:upper-bound-dynamic} and \eqref{eq:lower-bound-dynamic}. Moreover, the bound on $\norm{\bxi^K-\bx^*}$ follows from \eqref{eq:saddle-rate-dynamic}. In fact, possibly a tighter bound can be derived using $\Theta(\bx^*,\btheta^*,\blambda^*)$ for $\lambda^*=\mathbf{0}$. In the rest of the proof, we construct the $\Theta(K)$ bound with properties as specified above.}

Note that using \eqref{eq:approx_error-for-full-vector-x} and the non-expansivity of projection, $\cP_{\cB}(\cdot)$, we conclude that $$\|{\tcR}^k(\bx)-\mathcal{P}_{\Ct}(\bx)\|\leq N~\Gamma \beta^{q_k}\norm{\bx}\quad \forall\bx.$$
Moreover, since we assumed that each $\rho_i$ has a compact domain with diameter at most $\Delta$, we immediately conclude that $\norm{\bx^k}\leq\sqrt{N}~\Delta$ and $\norm{\bxi^k}\leq \sqrt{N}\Delta$ for $k\geq 1$. Hence, from \eqref{eq:prox-error-seq-1} and nonexpansivity of prox operator we obtain
{\small
\begin{align}\label{eq:bound-error-3}
\norm{\be_3^{k+1}}\leq \alpha \norm{\be_2^{k+1}} \leq \alpha N\Gamma \beta^{q_k} \norm{\bxi^k}\leq \alpha N^{3\over 2}\Delta\Gamma \beta^{q_k}.
\end{align}
}%
Let $\bq^k=\bxi^k+\eta^k(\bxi^k-\bxi^{k-1})$ for $k\geq 0$. Note that for $\{\eta^k\}$ as specified in Algorithm DPDA-TV displayed in Fig.~\ref{alg:PDD}, we have $\eta^k\leq 1$. Therefore, it follows from \eqref{eq:prox-error-seq-1} and \eqref{eq:mu-bound} that
{\small
\begin{align}
\label{eq:e-bound}
\norm{\be_1^{k+1}}&=\norm{\cP_{\Ct}\left(\tfrac{1}{\gamma^k}{\bmu}^k+\bq^k \right)-\tcR^k\left(\tfrac{1}{\gamma^k}{\bmu}^k+\bq^k\right)}\leq N~\Gamma \beta^{q_k}\norm{\tfrac{1}{\gamma^k}{\bmu}^k+\bq^k} \nonumber \\
&\leq N\Gamma \beta^{q_k}\left(\frac{5\sqrt{N}\Delta}{\gamma^k}\sum_{t=0}^{k-1} \gamma^t+ 3\sqrt{N} \Delta \right) = N^{3\over 2}\Delta\Gamma \beta^{q_k}\left(\frac{5}{\gamma^k}\sum_{t=0}^{k-1} \gamma^t + 3\right).
\end{align}
}%
Therefore, using \eqref{eq:mu-bound} and \eqref{eq:e-bound} we obtain
{\small
\begin{align}\label{eq:E_mu_bound}
\sum_{k=0}^{K-1} \frac{\gamma^{k}}{\gamma^0} E_1^{k+1}(\tilde{\bmu})&=\sum_{k=0}^{K-1}\frac{\gamma^{k}}{\gamma^0}\norm{\be_1^{k+1}} \big(4\gamma^{k}\sqrt{N}~\Delta+\norm{\tilde{\bmu}-\bmu^{k+1}}\big) \nonumber \\
&\leq \sum_{k=0}^{K-1} \frac{\Delta}{\gamma^0}N^{3\over 2} \Gamma\beta^{q_k}\left(5\sum_{t=0}^{k-1} \gamma^t + {3\gamma^k}\right) \left(4\gamma^k \sqrt{N}\Delta+1+5\sqrt{N}\Delta\sum_{t=0}^{K-1}\gamma^t\right)\nonumber\\
&\leq  \sum_{k=0}^{K-1} \frac{\Delta^2}{\gamma^0}N^{3} \Gamma\beta^{q_k}\left(5\sum_{t=0}^{k-1} \gamma^t + {3\gamma^k}\right)\left(5\sum_{t=0}^{k-1} \gamma^t + 1+ {4\gamma^k}\right)   \triangleq \Theta_2(K).
\end{align}}%
Moreover, from \eqref{eq:bound-error-3} we obtain
{\small
\begin{align}\label{eq:E_x_bound}
\sum_{k=0}^{K-1} \frac{\gamma^{k}}{\gamma^0} E_2^{k+1}&= \sum_{k=0}^{K-1} \frac{\gamma^{k}}{\gamma^0}\norm{\be_3^{k+1}}\left(\frac{2}{\tau^{k}}\sqrt{N}\Delta+\alpha\norm{\be_2^{k+1}}\right) \nonumber \\
& \leq   \sum_{k=0}^{K-1} \alpha N^{3\over 2}\Delta\Gamma \beta^{q_k} \frac{\gamma^{k}}{\gamma^0} \left(\frac{2}{\tau^{k}}\sqrt{N}\Delta+\alpha N^{3\over 2}\Delta\Gamma \beta^{q_k} \right) \nonumber \\
&\leq  \sum_{k=0}^{K-1} \alpha N^{3}\Delta^2\Gamma \beta^{q_k} \frac{\gamma^{k}}{\gamma^0} \left(\frac{2}{\tau^{k}}+\alpha N\Gamma \beta^{q_k} \right) \triangleq \Theta_3(K).
\end{align}}%
Therefore, by letting $\Theta(K)=\Theta_1+\Theta_2(K)+\Theta_3(K)$ it is easy to see that $\Theta(K)=\cO(\sum_{k=0}^{K-1}\beta^{q_k}k^4)$; thus, $\sup_{K\in\integers_+}\Theta(K)<\infty$ due to our choice of $\{q_k\}$, and this completes the proof.
\end{document}